\newtheorem{theorem}{Theorem}[section]
\newtheorem{proposition}[theorem]{Proposition}
\newtheorem{corollary}[theorem]{Corollary}
\newtheorem{lemma}[theorem]{Lemma}
\newtheorem{question}[theorem]{Question}
\theoremstyle{definition}
\newtheorem{definition}[theorem]{Definition}
\newcommand{\N}{\mathbb N}
\newcommand{\Tau}{\mathcal T}
\newcommand{\im}{\operatorname{im}}
\newcommand{\dom}{\operatorname{dom}}
\newcommand{\id}{\operatorname{id}}
\newcommand{\set}[2]{\left\{#1:#2\right\}}
\title{Topological embeddings into transformation monoids}
\author{S. Bardyla, L. Elliott, J. D. Mitchell and Y. P\'eresse}
\address{S.~Bardyla: Institute of Mathematics,
P.J. \v{S}af\'arik University in Ko\v sice, Slovakia; and Institute of Discrete Mathematics and Geometry, TU Wien, Austria.}
\email{sbardyla@gmail.com}
\address{L. Elliott: Department of Mathematics and Statistics Binghamton
  University, PO Box 6000, Binghamton, New York 13902-6000, USA}
\email{luke.elliott142857@gmail.com}
\address{J. D. Mitchell: University of St Andrews, School of Mathematics and
Statistics, Scotland, UK}
\email{jdm3@st-andrews.ac.uk}
\address{Y. P\'eresse: University of Hertfordshire, Department of Physics,
Astronomy and Mathematics, Hatfield, Hertfordshire, UK}
\email{y.peresse@herts.ac.uk}
\subjclass[2020]{20M18, 20M20, 20M30, 54H15, 54E35}
\keywords{Transformation monoid, Baire space, Polish semigroup, topological
embedding, Clifford semigroup}
\thanks{The first named author was supported by the Slovak Research and Development Agency under the contract No. APVV-21-0468 and by the Austrian Science Fund FWF (Grant I 5930).}
\begin{document}
\begin{abstract}
In this paper we consider the questions of which topological semigroups
    embed topologically into the full transformation monoid $\N ^ \N$ or the
    symmetric inverse monoid $I_{\N}$ with their respective canonical Polish
    semigroup topologies.
 We characterise those topological semigroups that embed topologically into
    $\N ^ \N$ and belong to any of the following classes: commutative semigroups;
    compact semigroups; groups; and certain Clifford semigroups. We prove
    analogous characterisations for topological inverse
    semigroups and $I_{\N}$. We construct several examples of countable Polish
    topological semigroups that do not embed into $\N ^ \N$, which answer, in
    the negative, a recent open problem of Elliott et al. Additionally, we
    obtain two sufficient conditions for a topological Clifford semigroup to be
    metrizable, and prove that inversion is automatically continuous in every
    Clifford subsemigroup of $\N^\N$. The former complements recent works of
    Banakh et al.
\end{abstract}
\maketitle

\section{Introduction}

As is well-known, Cayley's Theorem states that every group is isomorphic to a subgroup of a symmetric
group $S_X$ on some set $X$. Analogous statements hold for semigroups and inverse semigroups. More specifically, 
every semigroup is isomorphic to a subsemigroup of the 
monoid $X ^ X$ consisting of all transformations of some set $X$ with operation the usual composition of functions; for more details see \cite[Theorem 1.1.2]{Howie1995aa}. 
Similarly, every inverse semigroup is isomorphic to an inverse subsemigroup of
a symmetric inverse monoid $I_X$, consisting of all partial permutations of the set $X$ with operation the usual composition of binary relations; see \cite[Theorem 5.1.7]{Howie1995aa}. 
Monoids of transformations and partial permutations have
been extensively studied in the literature; of particular relevance to this
paper are~\cite{Bor, EMP, EJMPP, Gan, Kud, Law, Dragan,  Mesyan1, Mesyan2, 
MMMP, MS2023,PP, PP1, Per, PS}.

It seems natural enough to ask if there is an analogue of Cayley's Theorem for semigroups endowed with topologies that are compatible with their algebraic structures. 
The monoids $\N^\N$, $S_{\N}$, and $I_\N$ each possess a canonical topology with respect
to which their operations are continuous. 
As a topological space, $\N ^ \N$ with the Tychonoff product topology arising from the discrete topology on
$\N$, is the well-known \textit{Baire space}.
The space $\N ^ \N$ is Polish, \textit{i.e.} completely
metrizable and separable; for further details see~\cite[Section 3]{Kechris1995}.
Multiplication as a function from $\N^{\N}\times \N^{\N}$ (with the
product topology) to $\N^{\N}$ is continuous, and as such $\N^{\N}$ is also a
\textit{topological semigroup}.
It was shown in \cite[Theorem 5.4(b)]{main} that
the unique Polish semigroup topology on $\N^{\N}$ is the topology of the Baire space. We refer to this topology as the canonical topology for $\N ^ \N$.

Topological groups and inverse semigroups are defined analogously, where both multiplication and inversion are continuous.
Since $S_{\N}$ is a $G_{\delta}$ subset of the Baire space $\N ^ \N$, the subspace topology on $S_{\N}$ is Polish.
Gaughan~\cite{Gaughan} showed that this topology is contained in every Hausdorff group topology on $S_{\N}$.  
On the other hand, every Borel measurable bijection between Polish groups is a homeomorphism (\cite[Propositions 3.2 and 3.3]{main}). Hence if $G$ is a Polish topological group with respect to topologies $\Tau_1$  and $\Tau_2$ where $\Tau_1\subseteq \Tau_2$, then the identity function from $(G, \Tau_2)$ to $(G, \Tau_1)$ is continuous, and hence Borel measurable, which implies that $\Tau_1 = \Tau_2$.
It follows that the subspace topology induced by the canonical topology on $\N ^ {\N}$ is the unique Polish group topology on $S_{\N}$. As such we refer to the subspace topology on $S_{\N}$ induced by the canonical topology on $\N ^ \N$ as the canonical topology for $S_{\N}$. 
Although not a subspace of $\N ^ \N$,
the symmetric
inverse monoid $I_{\N}$ also possesses a natural unique Polish inverse semigroup
topology~\cite[Theorem 5.15(ix)]{main}; a subbasis for this topology is given in \eqref{eq-subbasis-IN}.
The
symmetric group $S_{\N}$ is a subgroup of both $I_{\N}$ and $\N^\N$ and the
canonical topologies on these semigroups both induce the canonical topology on
$S_{\N}$. 
The
canonical topologies on $S_\N$, $\N^\N$ and $I_\N$ are zero-dimensional, i.e.
they each possess a basis consisting of clopen sets (see \eqref{eq-subbasis-NN} and \eqref{eq-subbasis-IN} for further details).
 Since all three spaces are Polish, zero-dimensional, and their compact subsets have empty interior, the Alexandrov-Urysohn Theorem (\cite[Theorem 7.7]{Kechris1995}) implies that $S_{\N}, I_{\N}$ and $\N ^ \N$ are homeomorphic, although they are clearly not isomorphic monoids. 

Throughout the remainder of this paper, unless explicitly stated  otherwise, we
write $S_\N$, $\N^\N$, and $I_\N$ to mean the corresponding topological
group, monoid, and inverse monoid endowed with their canonical topologies.

In this paper we consider the problems of which topological semigroups can be topologically embedded into $\N^{\N}$, and which topological inverse semigroups 
embed topologically into $I_{\N}$. It is well-known that a Hausdorff topological group $G$ embeds
topologically into the symmetric group $S_\N$ if and only if $G$ is
second-countable and possesses a neighbourhood basis of the identity consisting
of open subgroups; see, for example, \cite[Theorem 5.1]{Bor}.  This characterisation can be readily applied to an arbitrary topological group $G$ to determine whether or not $G$ embeds topologically into the symmetric group. 
For example, the additive group $\mathbb Q$ of rational numbers endowed with the subspace topology inherited from the real line, despite being second-countable and zero-dimensional, cannot be embedded into $S_{\N}$, because $(-1,1)\cap \mathbb Q$ is an open neighborhood of $0$ which contains no open subgroup of $\mathbb Q$.   

There are natural analogues, in the contexts
of semigroups and inverse semigroups, of the aforementioned characterisation for groups, in terms of right congruences; see Propositions~\ref{cool} and~\ref{cool1}.  
Right congruences of topological semigroups are, in general, much more complicated, and harder to work with than subgroups of topological groups. 
As such, unlike in the case of groups, Propositions~\ref{cool} and~\ref{cool1}
do not often simplify the process of 
determining whether or not a topological semigroup is topologically isomorphic
to a subsemigroup of $\N ^ \N$ or $I_{\N}$. 
Every countable Polish group is discrete, and so the trivial group is a neighbourhood basis of the identity; that is, every countable Polish group embeds topologically into $S_\N$, and hence into $\N ^ \N$ and $I_{\N}$ also.
On the other hand, the situation for countable Polish semigroups that are not groups is unclear; the following question was posed in \cite{main}.

\begin{question}[cf. Question 5.6 in \cite{main}]\label{question-main}
  Does every countable Polish semigroup embed topologically into
  $\N^{\N}$?
\end{question}

In this paper, we provide several alternate characterisations of particular types of topological semigroup that embed into $\N ^ \N$ or $I_{\N}$.

The paper is organised as follows. In \cref{section-statement} we state the main results of the paper; in \cref{section-proofs} we prove the main theorems; and in \cref{section-counter-examples} we provide a number of examples demonstrating the sharpness of our main results and 
we show that the answer to \cref{question-main} is negative. 


\section{Statement of main results}\label{section-statement}

We start by giving some preliminary material required to state the main results of this paper. 
Recall that, a semigroup $S$ is called an {\em inverse semigroup} if for each element $x\in
S$ there exists a unique inverse element $x^{-1}\in S$ such that $xx^{-1}x=x$
and $x^{-1}xx^{-1}=x^{-1}$.
An inverse semigroup $S$ is called {\em Clifford} if $xx ^ {-1} = x ^
{-1}x$ for all $x\in S$; or, equivalently, $S$ is a strong semilattice of
groups. 
Recall that a {\em semilattice} is a commutative semigroup of
idempotents. For every inverse semigroup $S$ the set $E(S)=\set{e\in S}{e ^ 2=e}$
is a semilattice. 
For a partial function $f$ on $X$ we denote the domain and image of $f$ by $\dom(f)$ and $\im(f)$, respectively. 
Throughout this paper we will write partial functions to the right of their arguments and compose from left to right; it will also be convenient to identify  partial functions $f : X \to X$ with their graphs $\set{(x, (x)f)}{x\in \dom(f)}\subseteq X \times X$.
Elements
of $I_X$ are referred to as \textit{partial permutations of the set $X$}.

A subbases for the canonical topologies on $\N ^ \N$ consists of the sets:
\begin{equation}\label{eq-subbasis-NN}
U_{x,y}=\set{f\in \N ^ {\N}}{(x,y)\in f}.
\end{equation}
The complement of $U_{x,y}$ is $\bigcup_{z\neq y} U_{x, z}$, which shows that the subbasic open sets are clopen, and hence $\N ^ \N$ with its canonical topology is zero-dimensional.
The following sets form a subbasis for the canonical topology on $I_\N$:
\begin{equation}\label{eq-subbasis-IN}
    U_{x, y}= \set{h\in I_{\N}}{(x,y)\in h},\hbox{ } W_x = \set{h\in I_{\N}}{ x\notin
    \dom(h)},\hbox{ } W_x^ {-1} =\set{h\in I_{\N}}{x\notin \im(h)},
\end{equation}
where $x,y\in\N$. 

 An {\em embedding} of a semigroup $S$ into a semigroup $T$ is an
injective homomorphism from $S$ to $T$. For topological semigroups $S$ and $T$ an embedding
$\phi: S \to T$ is called {\em topological} if both of the maps $\phi$ and
$\phi ^ {-1}: (S)\phi \to S$ are continuous. The term \textit{topological
isomorphism} is defined analogously.

In this paper we characterize the commutative topological subsemigroups of
$\N^{\N}$ and $I_\N$ as follows.

\begin{theorem}\label{I am done inventing new labels}
    A commutative topological semigroup $S$ embeds topologically into $\N^\N$
    if and only if there exists a countable family $\set{S_n}{n\in\N}$ of
    countable discrete semigroups such that $S$ embeds topologically into the
    Tychonoff product $\prod_{n\in\N}S_n$.
\end{theorem}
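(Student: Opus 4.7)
The plan is to prove both implications directly, leaning on \cref{cool} for the nontrivial direction.

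For the ``if'' direction, I would first show that every countable discrete semigroup $T$ admits a topological embedding into $\N^\N$. Cayley's theorem applied to the monoid $T^1$ obtained by adjoining an identity gives an injective homomorphism $T\hookrightarrow (T^1)^{T^1}$, $s\mapsto \lambda_s$, where $(x)\lambda_s = xs$. When $T^1$ is countably infinite, any bijection $T^1\to \N$ induces a topological isomorphism $(T^1)^{T^1}\cong \N^\N$; when $T^1$ is finite, embed $T^1$ into $\N$ and extend each transformation by the identity outside $T^1$. Either way, the image of $T$ is discrete inside $\N^\N$ because $\{f : (1)f = s\}$ meets the image only in $\{\lambda_s\}$. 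Next, the countable Tychonoff product $(\N^\N)^\N$ itself embeds topologically into $\N^\N$: identifying $\N$ with $\N\times\N$, let $(f_n)$ act on $(n,k)$ by $(n,k)\mapsto (n,(k)f_n)$; this is plainly an injective continuous homomorphism whose inverse on the image is continuous. Composing these embeddings shows that any $S$ which embeds topologically into $\prod_n S_n$ embeds topologically into $\N^\N$.

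For the ``only if'' direction, suppose $S$ is a commutative topological semigroup embedding topologically into $\N^\N$. By \cref{cool}, there is a countable family $\{\rho_i : i\in\N\}$ of right congruences on $S$, each with countably many classes, such that $\{[x]_{\rho_i} : x\in S,\ i\in\N\}$ is a basis for the topology of $S$. The key observation is that commutativity upgrades each $\rho_i$ to a two-sided congruence: if $(x,y)\in\rho_i$ then $(zx,zy) = (xz,yz)\in\rho_i$. Hence $S_i := S/\rho_i$ is a countable semigroup, which I endow with the discrete topology; the quotient map $\pi_i : S\to S_i$ is then a continuous homomorphism onto a countable discrete topological semigroup.

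Consider the diagonal map $\Phi : S \to \prod_{i\in\N} S_i$ defined by $\Phi(s) = ([s]_{\rho_i})_{i\in\N}$, which is a continuous homomorphism. Since $S$ embeds into the Hausdorff space $\N^\N$, it is Hausdorff, so any distinct $s\neq t$ are separated by some basic set $[x]_{\rho_i}$, forcing $[s]_{\rho_i}\neq [t]_{\rho_i}$ and hence $\Phi(s)\neq \Phi(t)$; so $\Phi$ is injective. Moreover, as $x$ and $i$ vary the preimages $\Phi^{-1}(\pi_i^{-1}(\{[x]_{\rho_i}\})) = [x]_{\rho_i}$ already range over a basis for $S$, so $\Phi$ is a topological embedding into $\prod_{i\in\N} S_i$.

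I expect the main conceptual point to be the passage from right congruences to two-sided congruences using commutativity: without this, the countable quotients supplied by \cref{cool} would merely be partitions and could not serve as the semigroup factors $S_n$. Once this is secured, both directions reduce to routine checks: a diagonal construction for ``only if'', and the Cayley embedding together with the standard reduction $(\N^\N)^\N \hookrightarrow \N^\N$ for ``if''.
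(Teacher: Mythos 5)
Your proposal is correct and follows essentially the same route as the paper: commutativity upgrades the right congruences supplied by \cref{cool} to two-sided congruences, and the diagonal map into the product of the countable discrete quotients, with injectivity from Hausdorffness and openness onto the image from the basis property, is exactly the paper's argument. For the converse you simply re-derive the paper's \cref{disc} and \cref{prod}(i) directly (Cayley representation of $T^1$ and the $\N\times\N$ coordinate trick) instead of citing \cref{cnew}, which is the same content.
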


A semigroup $S$ with adjoined external zero is denoted by $S^0$.

\begin{theorem}\label{I am done inventing new labels1}
    A commutative topological inverse semigroup $S$ embeds topologically into
    $I_\N$ if and only if there exists a countable family $\set{G_n}{n\in\N}$ of
    countable groups such that $S$ embeds topologically into the Tychonoff
    product $\prod_{n\in\N}G^0_n$, where each factor is discrete.
\end{theorem}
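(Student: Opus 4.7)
For the ``if'' direction, the key step is to show that $\prod_{n\in\N}G_n^0$, with each factor discrete, embeds topologically into $I_\N$. By the Vagner--Preston representation, each countable $G_n^0$ embeds as an inverse subsemigroup of $I_{G_n^0}\cong I_\N$; writing $e$ for the identity of $G_n$, the subbasic open $U_{e,g}$ selects only the image of $g\in G_n$ while $W_e$ selects only the image of $0$, so the image is discrete. To combine countably many copies, partition $\N=\bigsqcup_n X_n$ into infinite pieces and realise $\prod_n I_{X_n}$ inside $I_\N$ by taking disjoint unions of partial permutations; since each subbasic open $U_{x,y},W_x,W_x^{-1}$ of $I_\N$ involves only a single point of $\N$, the induced subspace topology on $\prod_n I_{X_n}$ coincides with the Tychonoff product topology, yielding the required embedding.

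For the ``only if'' direction, I will assume $S$ embeds topologically into $I_\N$. By first conjugating the embedding so that its image lies in $I_A$ for some infinite $A\subsetneq\N$ with infinite complement, I can adjoin an isolated identity mapped to $\id_\N$ to obtain a topological inverse monoid embedding of $S^1$. Apply \cref{cool1} to $S^1$ to obtain Vagner--Preston right congruences $\rho_i$ with countably many classes each, such that $\{[s]_{\rho_i},[s]_{\rho_i}^{-1}:s\in S^1,i\in\N\}$ is a subbasis for the topology. The plan is to show each $S^1/\rho_i$ embeds naturally into $G_i^0$ for some countable group $G_i$, and that the diagonal map $\phi:s\mapsto([s]_{\rho_i})_i$ then embeds $S^1$, and hence $S$, topologically into $\prod_i G_i^0$.

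Fix $\rho=\rho_i$. Commutativity makes $\rho$ a two-sided congruence, and each class satisfies one of the two Vagner--Preston conditions: (A) $t\in[s]$ implies $tt^{-1}\rho 1$, or (B) $[st]=[s]$ for all $t$. If $[a],[b]$ are both type B then $[a]=[ab]=[ba]=[b]$, so at most one type B class exists and is a zero of $S^1/\rho$; in a nontrivial quotient $[1]$ must be type A. For $s,t$ in a common type A class, multiplying $s\rho t$ by $s^{-1}$ and using $ss^{-1}\rho 1$ gives $s^{-1}t\rho 1$, and multiplying by $t^{-1}$ and using $tt^{-1}\rho 1$ yields $s^{-1}\rho t^{-1}$. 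For a type B class $[a]$, were $[a^{-1}]$ type A then $aa^{-1}\rho 1$ together with $aa^{-1}\in[a]$ would force $[a]=[1]$ to be type B, a contradiction; hence $[a^{-1}]=[a]$ and $\rho$ is closed under inversion. Finally, for type A classes $[a],[b]$, commutativity gives $(ab)(ab)^{-1}=aa^{-1}bb^{-1}\rho 1$; were $[ab]$ type B then $[ab]=[1]$, again impossible, so $[ab]$ is type A. The type A classes therefore form a countable group $G_i$ under $[s][t]=[st]$ with $[s]^{-1}=[s^{-1}]$, and $S^1/\rho$ embeds into $G_i^0$.

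It remains to verify that $\phi$ is a topological embedding. Each coordinate $S^1\to G_i^0$ factors through the quotient by $\rho_i$ and is continuous because each $\rho_i$-class is open, so $\phi$ is a continuous homomorphism into a product of discrete inverse semigroups. Inversion-closure yields $[x]_{\rho_i}^{-1}=[x^{-1}]_{\rho_i}$, so every subbasic open of $S^1$ is the preimage under $\phi$ of a single point in some factor; this gives both injectivity of $\phi$ and continuity of $\phi^{-1}$ on the image. The main technical obstacle is the structural analysis of $S^1/\rho_i$, especially the inversion-closure of $\rho_i$, which depends essentially on commutativity; without it, the derivation $s\rho t\Rightarrow s^{-1}\rho t^{-1}$ via successive multiplications would not go through.
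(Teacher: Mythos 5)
Your proposal is correct and follows essentially the same route as the paper: for the ``if'' direction, embed each discrete $G_n^0$ into $I_\N$ and combine via a partition of $\N$ (the paper's \cref{cool1} plus \cref{prod}(ii)); for the ``only if'' direction, pass to $S^1$, apply \cref{cool1}, use commutativity to make the $\rho_i$ two-sided and inversion-closed, show each quotient is a group or a group with zero, and conclude with the diagonal embedding. The only differences are cosmetic: you re-derive the paper's auxiliary lemmas (\cref{usefulnew1} and \cref{usefulnew2}) inline via a type-A/type-B case analysis rather than citing them, and you argue directly with the subbasis instead of first intersecting the congruences to obtain a basis, which works equally well (your injectivity claim does implicitly use that $S^1$ is Hausdorff so that the subbasic classes separate points, just as the paper does).
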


A topological space $X$ is called {\em totally disconnected} if the only
connected subsets of $X$ are singletons. It is well-known that every Tychonoff
zero-dimensional space is totally disconnected, but there exists a Polish totally
disconnected topological group which is not zero-dimensional~\cite[Proposition
 4.3]{Vmill}. However these two notions coincide for subspaces of the real line.
It is also well-known, see \cite[Theorem 7.8]{Kechris1995} for example, that a topological space $X$ is homeomorphic to a
subspace of $\N^\N$ if and only if $X$ is metrizable, zero-dimensional, and
second-countable. In this paper we obtain the following characterization of the
compact subsemigroups of $\N^\N$.

\begin{theorem}\label{theorem-compact}
    Let $S$ be a compact topological semigroup. Then the following are equivalent:
    \begin{enumerate}[label=\rm (\roman*)]
    \item $S$ is homeomorphic to a subspace of $\N ^ \N$ (and $I_{\N})$;
    \item $S$ embeds topologically into $\N^\N$;
    \item $S$ is metrizable and totally disconnected.
    \end{enumerate}
\end{theorem}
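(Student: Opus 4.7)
I would first handle the easy implications. For (ii) $\Rightarrow$ (i), a topological semigroup embedding is in particular a topological embedding of underlying spaces; the parenthetical ``(and $I_\N$)'' comes for free because $\N^\N \cong I_\N$ as topological spaces, as recalled via the Alexandrov--Urysohn theorem in the introduction. For (i) $\Rightarrow$ (iii), I would use that $\N^\N$ is metrizable and zero-dimensional, hence totally disconnected, and that these properties are inherited by subspaces.

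The substantive direction is (iii) $\Rightarrow$ (ii), which I would prove by applying \cref{cool}. Since $S$ is compact, Hausdorff, and totally disconnected, it is zero-dimensional; together with metrizability this gives a countable basis $\{U_n : n \in \N\}$ of clopen sets. Adjoining an isolated external identity produces a compact, metrizable, zero-dimensional monoid $S^1$. For each $n$, I would consider the relation
\[
    \rho_n = \set{(x, y) \in S \times S}{\forall z \in S^1,\ xz \in U_n \iff yz \in U_n}.
\]
It is immediate that $\rho_n$ is a right congruence on $S$ (since $wz \in S^1$ whenever $w \in S$ and $z \in S^1$), and taking $z = 1$ in the defining property forces $[x]_{\rho_n} \subseteq U_n$ for every $x \in U_n$, so the $\rho_n$-classes refine the clopen basis $\{U_n\}$.

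The heart of the argument, and where I expect the main obstacle to lie, is to show that each $\rho_n$ has only finitely many equivalence classes, all clopen. The plan is to exploit the continuous map $\mu_n \colon S \times S^1 \to \{0, 1\}$ defined by $\mu_n(y, z) = 1$ iff $yz \in U_n$. Because $S \times S^1$ is compact, Hausdorff, and zero-dimensional, the clopen set $\mu_n^{-1}(1)$ can be covered by basic clopen rectangles contained in it, and compactness extracts a finite subcover
\[
    \mu_n^{-1}(1) = \bigcup_{i=1}^{k_n} A_i \times B_i, \qquad A_i \subseteq S,\ B_i \subseteq S^1 \text{ clopen}.
\]
From this decomposition $\{z \in S^1 : yz \in U_n\} = \bigcup_{i : y \in A_i} B_i$ depends only on the subset $J(y) := \{i \leq k_n : y \in A_i\}$ of $\{1, \ldots, k_n\}$, so $\rho_n$ has at most $2^{k_n}$ classes; moreover each class, being the preimage under $y \mapsto J(y)$ of a fixed subset, is a Boolean combination of the clopen sets $A_i$, and hence clopen.

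Once this lemma is established, the family $\set{[x]_{\rho_n}}{x \in S,\ n \in \N}$ is a basis of clopen sets for the topology on $S$ (refining $\{U_n\}$), and \cref{cool}, applied to the countable family of right congruences $\{\rho_n : n \in \N\}$, each with finitely many (and hence countably many) classes, produces the desired topological embedding $S \hookrightarrow \N^\N$.
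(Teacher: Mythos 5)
Your proof is correct, but it takes a genuinely different route from the paper for the substantive implication (iii) $\Rightarrow$ (ii). The paper simply invokes Numakura's theorem that every compact Hausdorff totally disconnected topological semigroup is profinite, together with the observation (extracted from Numakura's proof) that a $G_\delta$ diagonal --- automatic under metrizability --- yields an embedding into a product of only \emph{countably many} finite discrete semigroups; the conclusion then follows from \cref{cnew}. You instead build the required right congruences by hand: $\rho_n$ is precisely the right syntactic congruence of the clopen basic set $U_n$ relative to $S^1$, and your compactness argument decomposing the clopen set $\mu_n^{-1}(1)\subseteq S\times S^1$ into finitely many clopen rectangles is in essence a direct, self-contained proof of (a one-sided version of) Numakura's theorem. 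All the steps check out: $\rho_n$ is a right congruence because $wz\in S^1$ for $w\in S$, $z\in S^1$; taking $z=1$ gives $[x]_{\rho_n}\subseteq U_n$, so the classes form a basis; and each class is a finite union of Boolean combinations of the $A_i$, hence clopen, with at most $2^{k_n}$ classes in total (your phrase ``the preimage of a fixed subset'' is slightly imprecise, since distinct sets $J(y)$ can yield the same union $\bigcup_{i\in J(y)}B_i$ and hence the same class, but the class is still a finite union of such preimages and therefore clopen). What each approach buys: the paper's citation is shorter and records the ``countably prodiscrete'' structure, which it reuses in the proof of \cref{theorem-compact-2} (where the finite factors are projected and shown to be inverse semigroups); your argument avoids the external reference and needs only right congruences rather than the two-sided congruences that Numakura's profinite decomposition provides.
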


The compact inverse subsemigroups of $I_{\N}$ are characterized as follows.

\begin{theorem}\label{theorem-compact-2}
    Let $S$ be a compact inverse topological semigroup. Then the following are
    equivalent:
    \begin{enumerate}[label=\rm (\roman*)]
        \item $S$ is homeomorphic to a subspace of $\N ^ \N$ (and $I_{\N}$);
        \item $S$ embeds topologically into $I_{\N}$;
        \item $S$ embeds topologically into $\N^\N$;
        \item $S$ is metrizable and totally disconnected.
    \end{enumerate}
\end{theorem}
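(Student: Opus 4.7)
By Theorem \ref{theorem-compact} applied to $S$ viewed as a compact topological semigroup, conditions (i), (iii), and (iv) are pairwise equivalent, so it suffices to insert (ii) into the cycle by proving (ii) $\Rightarrow$ (iii) and (iv) $\Rightarrow$ (ii).

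For (ii) $\Rightarrow$ (iii), I will use the standard ``sink-point'' map $\Phi: I_\N \to (\N \cup \{*\})^{\N \cup \{*\}} \cong \N^\N$, sending $f \in I_\N$ to its total extension fixing $*$ and mapping $\N \setminus \dom(f)$ to $*$. A routine check shows $\Phi$ is a continuous injective semigroup homomorphism, so composing with the assumed embedding $S \hookrightarrow I_\N$ gives a continuous injective semigroup homomorphism $S \to \N^\N$. Compactness of $S$ and Hausdorffness of $\N^\N$ automatically promote this to a topological embedding.

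For the main direction (iv) $\Rightarrow$ (ii), the plan is to realise $S$ as a topological inverse subsemigroup of a countable product of finite discrete inverse semigroups, and then to embed such a product into $I_\N$ via a disjoint-union construction. Adjoining an identity if necessary (the added point is isolated, so the argument restricts back to $S$), I may assume $S$ is an inverse monoid. The crucial lemma is that for every clopen $U \subseteq S$, the set $\set{Us^{-1}}{s \in S}$, where $Us^{-1} := \set{z \in S}{zs \in U}$, is finite. I will prove this via the tube lemma applied to the clopen set $\{(z,s) \in S \times S : (zs \in U) \Leftrightarrow (zs_0 \in U)\}$, which contains the compact slice $S \times \{s_0\}$: the tube lemma yields an open $V \ni s_0$ on which $s \mapsto Us^{-1}$ is constant, so this locally constant map from the compact space $S$ has finite image.

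Granted the lemma, the finite Boolean algebra $B_U$ generated by the orbit $\set{Us^{-1}}{s \in S}$ is $S$-invariant under $V \mapsto Vs^{-1}$, and the kernel $\theta_U$ of the resulting action $S \to \operatorname{End}(B_U)$ (into a finite monoid) is a two-sided congruence with finitely many clopen classes (two-sidedness follows from $V(ws)^{-1} = \set{z}{zw \in Vs^{-1}}$). The quotient $S/\theta_U$ is then a finite inverse semigroup, and $U$ is a union of $\theta_U$-classes (detected via $1 \in Us^{-1} \iff s \in U$). Taking a countable clopen basis $\{U_n\}_{n\in\N}$ for $S$, the diagonal map $S \to \prod_n S/\theta_{U_n}$ is an injective continuous homomorphism into a product of finite discrete inverse semigroups, hence a topological embedding by compactness. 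Finally, applying Vagner--Preston to each factor $F_n := S/\theta_{U_n}$ and combining via the bijection $\N \cong \bigsqcup_n F_n$ produces a topological inverse-semigroup embedding $\prod_n F_n \hookrightarrow \prod_n I_{F_n} \hookrightarrow I_\N$. The main obstacle is the finite-orbit lemma, which is what bridges the topological-space-level embedding supplied by Theorem \ref{theorem-compact} with the richer structure required by $I_\N$.
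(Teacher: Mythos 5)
Your proposal is correct, and it diverges from the paper in an instructive way on the key implication. For (iv) $\Rightarrow$ (ii) the paper simply cites Numakura's theorem (together with the observation, extracted from Numakura's proof, that a $G_\delta$ diagonal yields a \emph{countable} product of finite discrete semigroups), notes that each coordinate projection of $S$ is a finite inverse semigroup, and finishes with Wagner--Preston and \cref{prod}(ii). You instead re-prove the profinite decomposition from scratch: your tube-lemma argument showing that $s\mapsto Us^{-1}$ is locally constant, hence of finite range, and the resulting finite syntactic congruence $\theta_U$ with clopen classes saturating $U$, is essentially a self-contained proof of the countable version of Numakura's theorem, with the countability of the family of congruences coming directly from a countable clopen basis rather than from a $G_\delta$-diagonal argument. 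All the individual steps check out: the relevant set is indeed clopen and contains the compact slice, $(Ut^{-1})s^{-1}=U(st)^{-1}$ gives invariance and two-sidedness, the adjoined identity makes $U$ a union of $\theta_U$-classes, finite quotients of inverse semigroups are inverse, and compactness upgrades the continuous injective diagonal map to an embedding. Your treatment of the easy implications also differs slightly but harmlessly: the paper disposes of (ii) by the one-line observation that (ii) $\Rightarrow$ (iv) since $I_\N$ is metrizable and totally disconnected, whereas you prove (ii) $\Rightarrow$ (iii) via the sink-point map $I_\N\to\N^\N$ (the same map as in the paper's \cref{embcl}); that map is only a topological embedding for the coarser topology $\Tau$ on $I_\N$, but, as you note, continuity plus injectivity plus compactness of $S$ suffices. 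The trade-off is that your route is longer but self-contained, while the paper's is shorter at the cost of leaning on the fine print of Numakura's proof.
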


Turning to topological groups, we obtain the following characterisation.

\begin{theorem}\label{theorem-groups}
    Let $G$ be a Hausdorff topological group. Then the following conditions are
    equivalent:
    \begin{enumerate}[label=\rm (\roman*)]
    \item $G$ embeds topologically into $S_{\N}$;
      \item $G$ embeds topologically into $I_\N$;
      \item $G$ embeds topologically into $\N^\N$;
      \item $G$ is second-countable and has a neighbourhood basis of the
        identity consisting of open subgroups.
    \end{enumerate}
\end{theorem}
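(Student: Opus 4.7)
The plan is to establish the cycle (iv) $\Rightarrow$ (i) $\Rightarrow$ (ii) $\Rightarrow$ (i) together with (iii) $\Rightarrow$ (iv), where (i) $\Rightarrow$ (iii) is immediate. The implications (i) $\Rightarrow$ (ii) and (i) $\Rightarrow$ (iii) are trivial since $S_\N$ is a topological subsemigroup of both $I_\N$ and $\N^\N$ whose canonical topology agrees with the induced one. For (iv) $\Rightarrow$ (i) the standard construction applies: given a countable neighbourhood basis $\{H_n\}_{n\in\N}$ of the identity consisting of open subgroups, the natural action of $G$ on the countable set $X = \bigsqcup_{n\in\N} G/H_n$ (each factor is countable because $G$ is second-countable) defines a homomorphism $\varphi: G \to \mathrm{Sym}(X) \cong S_\N$. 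Injectivity follows because $\ker\varphi \subseteq \bigcap_n H_n = \{1\}$, where the latter equality uses the Hausdorff hypothesis. Continuity of $\varphi$ and of $\varphi^{-1}$ on the image follows from the observation that each $H_n$ is the stabiliser of a canonical basepoint of $G/H_n$ in $X$.

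For (iii) $\Rightarrow$ (iv), the plan is to apply \cref{cool} to extract a countable family $\{\rho_i\}_{i\in\N}$ of right congruences of $G$, each with countably many classes, such that $\{[x]_{\rho_i} : x\in G,\ i\in\N\}$ is a basis for the topology of $G$. The key observation is that in a group every class $[1]_{\rho_i}$ is automatically a subgroup: if $(g,1), (h,1)\in \rho_i$, then right-multiplying the first pair by $h$ yields $(gh,h)\in \rho_i$, so transitivity gives $gh\in [1]_{\rho_i}$; and right-multiplying $(g,1)$ by $g^{-1}$ yields $(1,g^{-1})\in \rho_i$, whence $g^{-1}\in [1]_{\rho_i}$. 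Because $1\in [x]_{\rho_i}$ forces $[x]_{\rho_i}=[1]_{\rho_i}$, the family $\{[1]_{\rho_i}\}_{i\in\N}$ is a neighbourhood basis of the identity consisting of open subgroups, which is exactly (iv).

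The step I expect to require the most care is (ii) $\Rightarrow$ (i). Identifying $G$ with its image in $I_\N$, the identity $e$ of $G$ is an idempotent partial permutation, hence the identity map on some subset $A := \dom(e)\subseteq \N$. Unpacking the identities $eg = ge = g$ and $gg^{-1} = g^{-1}g = e$ for $g\in G$ forces $\dom(g) = \im(g) = A$, so $G$ is contained in the symmetric group $\mathrm{Sym}(A)$ on $A$. Next one verifies that the subspace topology on $G$ inherited from $I_\N$ coincides with the topology of pointwise convergence on $A$: the subbasic sets $W_x$ and $W_x^{-1}$ intersect $G$ in either $\emptyset$ or all of $G$ according to whether $x\in A$, and $U_{x,y}\cap G$ is nonempty only when $x,y\in A$, in which case it agrees with the usual pointwise-convergence subbasic set on $A$. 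Extending each permutation of $A$ to a permutation of $\N$ by fixing $\N\setminus A$ pointwise then supplies a topological embedding of $\mathrm{Sym}(A)$, and hence of $G$, into $S_\N$.
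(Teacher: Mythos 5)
Your proof is correct, but it routes the equivalences differently from the paper. The paper's hard step is (iii)~$\Rightarrow$~(i): it shows directly that any subgroup $G$ of $\N^\N$ acts faithfully by permutations on the common image $\im(g)$ (which requires the technical \cref{ultranew}, in particular parts (v) and (vi), to control non-injective transformations), and it handles (ii)~$\Rightarrow$~(iii) by invoking \cref{embcl}; the equivalence (i)~$\Leftrightarrow$~(iv) is simply cited as well known. You instead prove (ii)~$\Rightarrow$~(i) directly --- which is genuinely easier than the paper's (iii)~$\Rightarrow$~(i), since in $I_\N$ the identities $eg=ge=g$ and $gg^{-1}=g^{-1}g=e$ immediately force $\dom(g)=\im(g)=\dom(e)$ and the injectivity of partial permutations hands you a faithful action on $\dom(e)$ with the right topology --- and you close the loop through (iv) by extracting from \cref{cool} that each class $[1]_{\rho_i}$ is an open subgroup, then running the classical coset-action construction for (iv)~$\Rightarrow$~(i). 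The trade-off is that you must actually carry out the classical (iv)~$\Rightarrow$~(i) argument that the paper avoids, whereas the paper must prove the more delicate structure lemma for subgroups of $\N^\N$. Two small points to tidy: in (iii)~$\Rightarrow$~(iv) you should note that second-countability of $G$ is immediate (the family $\set{[x]_{\rho_i}}{x\in G,\ i\in\N}$ is a countable basis, or simply $G$ embeds in the second-countable space $\N^\N$), since (iv) asserts it; and in (ii)~$\Rightarrow$~(i) it is worth one line to observe that the group inverse of $g$ in $G$ coincides with its inverse as a partial bijection (by uniqueness of inverses in the inverse semigroup $I_\N$), or alternatively to derive $\dom(g)=\dom(e)$ from $\dom(e)=\dom(gg^{-1})\subseteq\dom(g)\subseteq\dom(e)$ without that identification.
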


Perhaps the next most natural objective is to characterize those countable Polish
Clifford semigroups that topologically embed into $\N^\N$ or $I_\N$. 
The following notion introduced by Banakh and Pastukhova~\cite{BP}
is crucial for this purpose.

\begin{definition}\label{def}
    An inverse semigroup $X$ endowed with a semigroup topology is called {\em ditopological} if  inversion is continuous; and for
    any point $x\in X$ and any neighborhood $O$ of $x$ there are neighborhoods
    $U$ and $W$ of $x$ and $xx^{-1}$, respectively, such that
    $$\set{s\in S}{\exists{b}\in U,\ \exists e\in W\cap E(S) \text{ such that
    }b=es}\cap \set{s\in S}{ss^{-1}\in W}\subseteq O.$$
\end{definition}

The countable Polish Clifford subsemigroups of $I_\N$ are characterised as follows.

\begin{theorem}\label{emb22}
    A countable Polish Clifford semigroup $S$ embeds topologically into $I_\N$
    if and only if $S$ is ditopological and the semilattice $E(S)$ embeds
    topologically into $I_\N$.
\end{theorem}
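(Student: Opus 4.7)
I first verify that $I_\N$ itself is ditopological by direct inspection of its subbasis: given $f\in I_\N$ and a basic open $O\ni f$ specified by finitely many subbasic conditions from $\{U_{x,y},W_x,W_x^{-1}\}$, setting $U:=O$ and letting $W$ be a basic open neighborhood of $ff^{-1}$ recording the domain information implicit in $O$ yields the inclusion of \cref{def}. Being ditopological passes to topological inverse subsemigroups, so $S$ is ditopological. The embedding $E(S)\hookrightarrow I_\N$ is obtained by restricting $S\hookrightarrow I_\N$ to the topological inverse subsemigroup $E(S)$.

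\textbf{Backward direction.} Assume $S$ is countable Polish Clifford and ditopological with $E(S)\hookrightarrow I_\N$ topologically. After adjoining an isolated identity (which preserves all hypotheses), assume $S$ is an inverse monoid; the plan is to construct a countable family of Vagner-Preston right congruences on $S$ whose classes and their inverses form a subbasis for the topology, then invoke \cref{cool1}. By \cref{cool1} applied to $E(S)$, fix Vagner-Preston right congruences $\{\tau_n\}_{n\in\N}$ on $E(S)$ with countably many (clopen) classes forming a subbasis for the topology of $E(S)$. Define $\rho_n$ on $S$ by $s\,\rho_n\, t\iff ss^{-1}\,\tau_n\,tt^{-1}$; the Clifford identity $(st)(st)^{-1}=(ss^{-1})(tt^{-1})$ makes $\rho_n$ a right congruence with countably many classes, and the Vagner-Preston dichotomy for $\rho_n$ at $s$ reduces to that of $\tau_n$ at $ss^{-1}$.

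To separate elements within a single $\mathcal{H}$-class (each a countable, hence discrete, Polish group), I introduce additional Vagner-Preston congruences capturing ``group coordinates.'' For each $g\in S$ and each $\tau_n$-class $V\ni gg^{-1}$ from the subbasis of $E(S)$, define
\[
s\,\pi_{g,V}\, t\iff \bigl(ss^{-1},tt^{-1}\in V\ \text{and}\ gg^{-1}s=gg^{-1}t\bigr)\ \text{or}\ \bigl(ss^{-1},tt^{-1}\notin V\bigr).
\]
Using the Vagner-Preston property of $\tau_n$ at $V$ (to control how $ss^{-1}\cdot rr^{-1}$ interacts with $V$), one checks that $\pi_{g,V}$ is a right congruence with countably many classes; the large class $\{s:ss^{-1}\notin V\}$ is a right ideal (alternative (b) of Vagner-Preston), while each small class falls into the near-identity alternative (alternative (a)). Given $g\in S$ and a basic open $O\ni g$, the ditopological hypothesis at $g$ produces $U\ni g$ and $W\ni gg^{-1}$ with $\{s:ss^{-1}\in W\}\cap\{s:gg^{-1}s\in U\}\subseteq O$; refining $W$ to a finite intersection of $\tau_n$-classes and arranging $U$ inductively from the subbasis under construction displays a basic neighborhood of $g$ inside $O$ as a finite intersection of $\rho_n$- and $\pi_{g,V}$-classes, and \cref{cool1} delivers the topological embedding $S\hookrightarrow I_\N$.

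\textbf{Main obstacle.} The principal difficulty is ensuring that the ``small'' classes of $\pi_{g,V}$ are actually open in $S$ rather than merely closed: the condition $gg^{-1}s=gg^{-1}t$ is the preimage of a singleton under the continuous map $s\mapsto gg^{-1}s$, and its openness is not automatic. This is precisely where the ditopological hypothesis of \cref{def} enters: combined with the discreteness of each $\mathcal{H}$-class, it forces the image $gg^{-1}\cdot\{s:ss^{-1}\in V\}$ to be a discrete subspace of $S$ for suitable $V$, so that the small classes (preimages of points under a map with discrete image) become open. A secondary technical point is the Vagner-Preston verification itself, which requires a case split based on whether $V=[1]_{\tau_n}$ so that the correct alternative is available in each situation; without ditopological both of these verifications collapse and the subbasis requirement of \cref{cool1} cannot be met.
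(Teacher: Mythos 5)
Your forward direction rests on the claim that $I_\N$ itself is ditopological, and that claim is false. The condition in \cref{def} constrains only $ss^{-1}$ (hence $\dom(s)$) together with a left factorisation $b=es$, so it cannot control $\im(s)$. Concretely, take $x=\emptyset$ and $O=W_0^{-1}=\set{h\in I_\N}{0\notin\im(h)}$: any neighbourhoods $U$ and $W$ of $\emptyset=xx^{-1}$ contain basic ones determined by finitely many domain/image constraints, so choosing $n$ outside all of the relevant finite sets and setting $s=\{(n,0)\}$ gives $ss^{-1}=\{(n,n)\}\in W$ and $\emptyset=\emptyset\cdot s\in U$ with $\emptyset\in W\cap E(I_\N)$, yet $s\notin O$. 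This is exactly why the paper introduces the weaker notion of \cref{def1}: it proves that $I_\N$ is only \emph{weakly} ditopological (\cref{wdit}), that weak ditopologicality passes to inverse subsemigroups, and that the two notions coincide for Clifford semigroups because $xx^{-1}=x^{-1}x$ (\cref{P}). Your forward direction can be repaired along those lines, but not as written.

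The backward direction is a sketch whose unverified steps are precisely the hard ones. First, $\pi_{g,V}$ need not be a right congruence: if $ss^{-1}$ and $tt^{-1}$ lie in two \emph{different} $\tau_n$-classes, both distinct from $V$, then $s\,\pi_{g,V}\,t$ holds by the second clause, but right multiplication by $r$ may move one of $ss^{-1}rr^{-1}$, $tt^{-1}rr^{-1}$ into $V$ and not the other; the Vagner--Preston dichotomy gives absorption of a class under multiplication, not primality of its complement, so it does not exclude this. Second, the openness of the ``small'' $\pi_{g,V}$-classes is asserted from ``ditopological plus discrete $\mathcal{H}$-classes'' with no argument, and third, the claim that the resulting classes generate the topology is circular as stated (``arranging $U$ inductively from the subbasis under construction''). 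These are exactly the points at which the paper instead invokes the nontrivial Banakh--Pastukhova embedding theorem: it shows $E(S)$ is a $U_2$-semilattice because a countable Polish space is scattered (\cref{supernew}), that the maximal subgroups are closed, hence Polish, hence discrete and embeddable (\cref{theorem-groups}), and then applies \cref{theorem-we-need-better-labels}, which via \cref{thBP} embeds $S$ into $E(S)\times\prod_{e\in A}(H_e^0)^{A\cap{\Uparrow}e}$ and hence into $I_\N$ by \cref{prod}. A self-contained congruence-theoretic proof along your lines would essentially have to reprove \cref{thBP}, and the present sketch does not do that.
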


There exist countable commutative Polish Clifford semigroups with compact semilattice of idempotents that are not ditopological; see \cref{exB}. 

Automatic continuity of inversion in paratopological groups, or more
general, inverse topological semigroups was investigated by many authors
in~\cite{BG, BR, Brand, Ellis, GR, Mon, Pfi, Roma, Tka}. 
The following theorem implies that inversion is 
automatically continuous in every Clifford subsemigroups of $\N ^ \N$.

\begin{theorem}\label{ditop}
Each Clifford subsemigroup of $\N^\N$ is ditopological.    
\end{theorem}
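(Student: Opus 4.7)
My plan is to prove the two components of ditopologicality---the neighbourhood condition and the continuity of inversion---separately, both leveraging the same structural fact: in any Clifford semigroup $S$, every element $s$ satisfies $s = e_s s = s e_s$ and $s^{-1} = e_s s^{-1}$, where $e_s := ss^{-1} = s^{-1}s$ is the identity of the maximal subgroup containing $s$. When $S \subseteq \N^\N$, this forces $\im(s) \subseteq A_{e_s} := \im(e_s)$, and $s|_{A_{e_s}}$ is a permutation of $A_{e_s}$ whose inverse permutation coincides with the restriction of $s^{-1}$ to $A_{e_s}$.

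For the neighbourhood condition I would fix $x \in S$, set $e := xx^{-1}$, reduce to a basic open $O = \{y \in \N^\N : (n)y = (n)x \text{ for all } n \in F\} \cap S$ with $F$ finite, and choose $U := O$ together with $W := \{y \in \N^\N : (n)y = (n)e \text{ for all } n \in F\} \cap S$. Any $s$ in the intersection from \cref{def} satisfies $(n)ss^{-1} = (n)e$ for each $n \in F$ (from $ss^{-1} \in W$), and $(n)x = (n)b = ((n)f)s = ((n)e)s$ (from $b = fs$ with $b \in U$ and $f \in W \cap E(S)$, after renaming the existential idempotent to avoid clash with $e$). The Clifford identity $s = ss^{-1}s$ then yields $(n)s = ((n)ss^{-1})s = ((n)e)s = (n)x$, so $s \in O$.

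For continuity of inversion at $x$, fix $k \in \N$ and introduce the auxiliary points $r := (k)x$, $p := (k)e$, $m := (k)x^{-1}$, $m' := (m)x^{-1}$. The identities $x = ex$ and $x^{-1} = ex^{-1}$ give $(k)x = (p)x = r$, $(m)x = p$, and $(m')x = m$, together with $p, m \in A_e$. Let $V$ be the open neighbourhood of $x$ in $S$ cutting out these four equalities. For $s \in V$, the conditions place $r, p, m \in \im(s) \subseteq A_{e_s}$, and bijectivity of $s|_{A_{e_s}}$ makes $p$ and $m$ the unique $A_{e_s}$-preimages of $r$ and $p$ under $s$, respectively. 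Combining this with $s^{-1} = e_s s^{-1}$ and the successive computations $(k)e_s = ((k)s)s^{-1} = (r)s^{-1} = p$ and $(k)s^{-1} = ((k)e_s)s^{-1} = (p)s^{-1} = m$ yields $(k)s^{-1} = m = (k)x^{-1}$, so inversion is continuous at $x$.

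The main obstacle is the continuity-of-inversion step: one must verify that the four-point condition defining $V$ really places $r, p, m$ into $A_{e_s}$ for arbitrary $s \in V$, despite $e_s$ not being a priori close to $e$. Once that inclusion is secured, the Clifford identities together with the bijectivity of $s|_{A_{e_s}}$ deliver the value of $(k)s^{-1}$ algebraically, and the neighbourhood condition is then a one-line consequence of $s = ss^{-1}s$.
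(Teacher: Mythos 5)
Your proof is correct, but it takes a genuinely different route from the paper's. The paper derives \cref{ditop} by combining two separately stated propositions: \cref{nontrivial}, which establishes automatic continuity of inversion in Clifford subsemigroups of $\N^\N$ by a direct argument involving the auxiliary sets $T$, $Z$, $W'$, $W$, and \cref{new 2}, which shows that \emph{every} topological inverse subsemigroup of $\N^\N$ is ditopological by appealing to the right-congruence characterization of \cref{cool} (taking $U=[x]_{\rho_n}$ and $W=[xx^{-1}]_{\rho_n}$). You instead verify both halves by bare pointwise computations. Your neighbourhood-condition argument with $U=O$ and $W=\set{y\in S}{(n)y=(n)xx^{-1}\ \forall n\in F}$ is essentially the pointwise shadow of the paper's congruence-class computation $[x]_{\rho_n}=[ey]_{\rho_n}=[yy^{-1}y]_{\rho_n}=[y]_{\rho_n}$, but it bypasses \cref{cool} entirely. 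Your inversion argument parallels \cref{nontrivial} but is leaner: you reduce to a single subbasic coordinate $k$, and the insertion of the extra point $m'=(m)x^{-1}$ makes the crucial memberships $r,p,m\in\im(s)$ fall out of four pure agreement conditions $(k)s=(k)x$, $(p)s=(p)x$, $(m)s=(m)x$, $(m')s=(m')x$, whereas the paper has to impose image-membership separately through the open sets $W'$ and $W$; the unique-preimage step is justified by \cref{ultranew}(iii)--(iv) applied to the maximal subgroup $H_{ss^{-1}}$, exactly as you indicate. The trade-off: the paper's factorization yields the more general standalone result that all topological inverse subsemigroups of $\N^\N$ are ditopological (\cref{new 2}), while your argument is shorter and self-contained for the Clifford case, which is all \cref{ditop} asserts.
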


Also, \cref{ditop} allows us to characterize countable Polish Clifford subsemigroups of
$\N^\N$.

\begin{theorem}\label{emb23}
    A countable Polish Clifford semigroup $S$ embeds topologically into $\N^\N$
    if and only if $S$ is ditopological and the semilattice $E(S)$ embeds
    topologically into $\N^\N$.
\end{theorem}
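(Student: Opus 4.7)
The forward direction is immediate. If $\phi \colon S \to \N^\N$ is a topological embedding, then $\phi(S)$ is a Clifford subsemigroup of $\N^\N$, hence ditopological by \cref{ditop}; as ditopologicality passes through topological isomorphisms, $S$ is ditopological. The restriction of $\phi$ to the topological subsemigroup $E(S)$ is the required embedding $E(S) \hookrightarrow \N^\N$.

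For the backward direction, assume $S$ is ditopological and $E(S)$ embeds topologically into $\N^\N$. The plan is to verify the hypotheses of \cref{cool} on $S$ directly. Apply \cref{cool} to $E(S) \hookrightarrow \N^\N$ to fix right congruences $\{\sigma_n : n \in \N\}$ on $E(S)$, each with countably many classes, whose classes form a basis for the topology of $E(S)$. Define $\tilde{\sigma}_n$ on $S$ by $(s, t) \in \tilde{\sigma}_n$ iff $(ss^{-1}, tt^{-1}) \in \sigma_n$. In any Clifford semigroup, if $s \in G_e$ and $z \in G_f$ (where $G_e, G_f$ are the maximal subgroups), then $sz \in G_{ef}$, whence $(sz)(sz)^{-1} = ef = (ss^{-1})(zz^{-1})$. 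Consequently $\tilde{\sigma}_n$ is a right congruence with countably many classes. Continuity of $s \mapsto ss^{-1}$ (coming from the continuous inversion supplied by the ditopological hypothesis) implies each $\tilde{\sigma}_n$-class is the open preimage of a $\sigma_n$-class.

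The $\tilde{\sigma}_n$ generate only the subtopology pulled back from $E(S)$; the remaining ``intra-group'' topology must be captured via the ditopological property and the countability of $S$. For $x \in S$ and a basic neighborhood $O$ of $x$, \cref{def} furnishes open $U \ni x$ and $W \ni xx^{-1}$ with $\{s : \exists b \in U,\, e \in W \cap E(S)\ \text{s.t.}\ b = es\} \cap \{s : ss^{-1} \in W\} \subseteq O$. Choosing $W = [xx^{-1}]_{\sigma_n}$ for suitable $n$, the second set is exactly $[x]_{\tilde{\sigma}_n}$. For the first set, I enumerate $S = \{x_k\}_{k \in \N}$ and, for each $k$, construct a right congruence $\tau_k$ whose $x_k$-class is contained in a suitable open set determined by the ditopological data at $x_k$. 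The countable family of intersections $\{\tilde{\sigma}_n \cap \tau_k : n, k \in \N\}$ then consists of right congruences with countably many classes, whose classes form a basis for the topology of $S$, so \cref{cool} delivers $S \hookrightarrow \N^\N$.

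The main obstacle is the explicit construction of the transversal right congruences $\tau_k$ so that they are bona fide right congruences with countably many classes, and the verification that the combined family genuinely generates the full topology of $S$. Both hypotheses are essential here: the Clifford structure makes multiplication by idempotents well-behaved (so that the lifted $\tilde{\sigma}_n$ is a right congruence and the identity $(sz)(sz)^{-1} = ss^{-1}zz^{-1}$ holds), while the ditopological property is precisely what allows an arbitrary neighborhood of $x$ to be approximated from outside using neighborhoods of $xx^{-1}$ together with the left action of idempotents on $S$.
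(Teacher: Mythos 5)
Your forward direction is correct and coincides with the paper's (it follows from \cref{ditop} plus the observation that $E(S)$ inherits the embedding). The backward direction, however, has a genuine gap at precisely the point you flag yourself: the construction of the ``transversal'' right congruences $\tau_k$. Everything you do before that is sound but easy --- the lifted relations $\tilde{\sigma}_n$ are indeed congruences with clopen classes, and $\set{s\in S}{ss^{-1}\in W}$ with $W=[xx^{-1}]_{\sigma_n}$ is a $\tilde{\sigma}_n$-class --- but these congruences cannot separate two points of the same maximal subgroup, so the entire content of the theorem lives in the $\tau_k$. The ditopological condition does not hand you such congruences: the set $\set{s\in S}{\exists b\in U,\ \exists e\in W\cap E(S) \text{ such that } b=es}$ is not saturated under any evident right congruence (right-multiplying $s$ by $t$ turns the witness $b$ into $bt$, which need not lie in $U$), so it only tells you that a certain subset of $O$ is ``small,'' not that $O$ contains a class of a right congruence. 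Writing ``for each $k$, construct a right congruence $\tau_k$ whose $x_k$-class is contained in a suitable open set'' restates the conclusion of \cref{cool} rather than proving it; you acknowledge this as ``the main obstacle,'' and it remains unresolved.

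For comparison, the paper does not attack \cref{cool} directly. It first uses countability and Polishness to get that $S$ is scattered, whence $E(S)$ is a $U_2$-semilattice by \cref{supernew}, and that the maximal subgroups --- closed by continuity of inversion, hence countable Polish, hence discrete --- embed into $\N^\N$; it then applies \cref{newtheorem1}, whose engine is the nontrivial Banakh--Pastukhova embedding (\cref{thBP}) of $S$ into $E(S)\times\prod_{e\in A}(H_e^0)^{A\cap{\Uparrow}e}$, followed by \cref{prod} and \cref{usefulnew}. All of the combinatorial work your $\tau_k$ would have to perform is carried out inside \cref{thBP}, and it uses hypotheses your outline never establishes or invokes (the $U_2$-property of $E(S)$ and the discreteness of the maximal subgroups). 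As written, your argument is a plausible plan, not a proof.
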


Given \cref{emb22} and \cref{emb23} it is natural to ask if the semilattice $E(S)$ embeds into $\N^\N$ if and only if it embeds into $I_{\N}$ when $S$ is any countable Polish Clifford semigroup.
We will show in \cref{embcl} that every subsemilattice of $I_{\N}$ embeds topologically into $\N^\N$. The converse implication, however, fails, see \cref{chain-finite} for a counter-example. Furthermore, in~\cref{locally_compact_ditosemiex} we give an example of a countable Polish linear semilattice which is not topologically
isomorphic to a subsemigroup of either $\N^\N$ or $I_\N$.

For an element $x$ of a semilattice $X$ let ${\uparrow}x=\set{y\in X}{x\leq y}$.
\begin{definition}\label{def semilat}
    A semilattice $X$ endowed with a topology is called a:
    \begin{enumerate}[label=\rm (\roman*)]
      \item {\em $U$-semilattice}, if for every open set $U$ and every $x\in U$ there exist $y\in U$ and an open neighborhood $V$ of $x$ such
        that $V\subseteq{\uparrow} y$;
      \item {\em $U_2$-semilattice}, if for every open set $U$ and every $x\in U$ there exist $y\in U$ and a clopen ideal $I\subseteq X$ such that $x\in X\setminus I\subseteq {\uparrow} y$.
    \end{enumerate}
\end{definition}
Clearly, any $U_2$-semilattice is a $U$-semilattice, but the converse implication fails (see~\cite{BP} and
\cite{CHK} for more details).

Theorems~\ref{emb22} and~\ref{emb23} are derived from the following
more general theorems.

\begin{theorem}\label{theorem-we-need-better-labels}
    Let $S$ be a  Clifford topological semigroup whose set of idempotents
    $E(S)$ is a $U$-semilattice. Then $S$ embeds topologically into $I_{\N}$ if
    and only if $S$ is Hausdorff, ditopological and every maximal subgroup of
    $S$, as well as the semilattice $E(S)$,  embed topologically into $I_\N$.
\end{theorem}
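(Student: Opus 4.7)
The plan is to prove the two directions separately; the forward direction is essentially inheritance of properties, while the converse is the substantive part.

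\emph{Forward direction.} Suppose $\phi \colon S \to I_\N$ is a topological embedding. Hausdorffness of $S$ is inherited from $I_\N$. The restriction of $\phi$ to any inverse subsemigroup of $S$ remains a topological embedding into $I_\N$; applied to $E(S)$ (whose image lies in $E(I_\N)$) and to each maximal subgroup $G_e$ of $S$ (whose image lies in a maximal subgroup of $I_\N$), this yields the required topological embeddings of $E(S)$ and of each $G_e$. For ditopologicality, one verifies directly from the canonical subbasis of $I_\N$ recalled in the introduction that $I_\N$ is ditopological, and then checks that ditopologicality passes to inverse subsemigroups by intersecting the witnessing neighbourhoods of $x$ and $xx^{-1}$ with the subsemigroup.

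\emph{Converse direction.} Here I would apply~\cref{cool1}: it suffices to exhibit a countable family $\{\rho_n\}_{n\in\N}$ of Vagner-Preston right congruences on $S$, each with countably many classes, whose classes together with their inverses form a subbasis for the topology of $S$. The hypotheses provide two ingredients. First, \cref{cool1} applied to $E(S)$ yields a countable family $\{\sigma_j\}_{j\in\N}$ of Vagner-Preston right congruences on $E(S)$ whose classes form a subbasis. Second, \cref{theorem-groups} applied to each maximal subgroup $G_e$ shows that $G_e$ is second-countable with a neighbourhood basis at $e$ consisting of open subgroups; since $E(S)$ itself is second-countable, there are essentially only countably many such groups to track simultaneously. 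Using the strong semilattice of groups decomposition $S = \bigsqcup_{e \in E(S)} G_e$, I would lift each $\sigma_j$ to a right congruence $\widetilde{\sigma_j}$ on $S$ by declaring $x \mathrel{\widetilde{\sigma_j}} y$ whenever $xx^{-1} \mathrel{\sigma_j} yy^{-1}$ together with an appropriate compatibility condition coming from the connecting homomorphisms of the Clifford structure. In parallel, for each maximal subgroup $G_e$ and each open subgroup $H \leq G_e$ drawn from a countable basis, I would introduce a further right congruence $\rho_{e,H}$ whose class through $x \in G_e$ is the coset $xH$, extended to be absorbing on the rest of $S$.

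The main obstacle is simultaneously verifying that the combined family realises the topology on $S$ and that each member is Vagner-Preston. For the topology, given $x \in S$ and a neighbourhood $O$ of $x$, the ditopological hypothesis supplies neighbourhoods $U$ of $x$ and $W$ of $e := xx^{-1}$ for which the set appearing in~\cref{def} lies inside $O$. Using the subbasis of $E(S)$ I can shrink $W$ to a $\sigma_j$-class containing $e$, and invoking the $U$-semilattice property of $E(S)$ I can further shrink $W$ so that $W \subseteq {\uparrow}f$ for some $f \leq e$; then using the open-subgroup basis of $G_e$ I can shrink $U \cap G_e$ to a coset of some $H \leq G_e$. The intersection $[x]_{\widetilde{\sigma_j}} \cap [x]_{\rho_{e,H}}$ will then lie in $O$ by the ditopological inclusion. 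The hardest bookkeeping will be verifying the Vagner-Preston condition for $\widetilde{\sigma_j}$: for each $s \in S$ one must show that either all $t \mathrel{\widetilde{\sigma_j}} s$ satisfy $tt^{-1} \mathrel{\widetilde{\sigma_j}} 1$, or that $[s]_{\widetilde{\sigma_j}}$ is stable under all right multiplications. This dichotomy should reflect the corresponding dichotomy for $\sigma_j$ on $E(S)$, transported through the Clifford decomposition, but the verification will require careful case analysis separating the ``principal'' $\widetilde{\sigma_j}$-classes near the identity from the ``absorbing'' ones further down the semilattice.
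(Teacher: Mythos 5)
There are two genuine problems. First, in the forward direction your claim that ``$I_\N$ is ditopological'' is false: $I_\N$ is only \emph{weakly} ditopological. Take $x=\emptyset$ and $O=\set{h\in I_\N}{0\notin\im(h)}$. For any basic neighbourhoods $U$ and $W$ of $x=xx^{-1}=\emptyset$ (each determined by finitely many constraints on domains and images), pick $n$ outside all the relevant finite sets and let $s=\{(n,0)\}$. Then $ss^{-1}=\{(n,n)\}\in W$, and $b=\emptyset\in U$ is witnessed as $b=es$ with $e=\emptyset\in W\cap E(I_\N)$, yet $s\notin O$. The point is that the ditopological condition of \cref{def} only constrains data on the domain side, whereas the subbasic sets $W_x^{-1}$ constrain images; this is exactly why the paper introduces \emph{weak} ditopologicality (\cref{def1}), proves in \cref{wdit} that $I_\N$ is weakly ditopological (the extra neighbourhood $V$ of $x^{-1}x$ is what controls $\im$), passes this to inverse subsemigroups via the cited result of Pastukhova, and only then uses $xx^{-1}=x^{-1}x$ in Clifford semigroups (\cref{P}) to upgrade to ditopologicality. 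Your forward direction is repairable along these lines, but as written it rests on a false statement.

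Second, the converse direction is not a proof but a plan, and at least one of its building blocks fails. The congruence $\rho_{e,H}$ whose classes are the cosets $xH$ on $G_e$ and which is ``absorbing on the rest of $S$'' is not a right congruence: $S\setminus G_e$ need not be carried into a single class under right translation, since one can have $x\in G_{e'}$ and $y\in G_{e''}$ with $e',e''\neq e$ and some $z$ with $e'zz^{-1}=e$ but $e''zz^{-1}\neq e$, so that $xz\in G_e$ while $yz\notin G_e$. Likewise the ``appropriate compatibility condition coming from the connecting homomorphisms'' in the definition of $\widetilde{\sigma_j}$ is precisely where all the difficulty lies, and you explicitly leave both it and the Vagner--Preston verification open. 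The paper avoids this entire construction: it first upgrades the $U$-semilattice hypothesis to the $U_2$ property (using that $E(S)$ embeds into $E(I_\N)\cong 2^\N$, \cref{semil}, together with~\cite[Proposition 2.4(6)]{BP}), then invokes the Banakh--Pastukhova embedding theorem (\cref{thBP}) to embed $S$ into $E(S)\times\prod_{e\in A}(H_e^0)^{A\cap{\Uparrow}e}$ for a $U$-dense set $A$, which may be taken countable by second-countability of $S$ and \cref{sep}, and finally embeds each factor into $I_\N$ using \cref{usefulnew1} and \cref{prod}(ii). If you want to avoid \cref{thBP}, you would essentially have to reprove it, and your sketch does not yet do so.
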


\begin{theorem}\label{newtheorem1}
    Let $S$ be a Clifford topological semigroup whose set of idempotents $E(S)$
    is a $U_2$-semilattice. Then $S$ embeds topologically into $\N^{\N}$ if and
    only if $S$ is Hausdorff, ditopological and every maximal subgroup of $S$,
    as well as the semilattice $E(S)$,  embed topologically into $\N^\N$.
\end{theorem}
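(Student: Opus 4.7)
The forward direction is essentially formal: if $\phi : S \to \N^\N$ is a topological embedding, then $S$ is Hausdorff as a subspace of a Hausdorff space, \cref{ditop} gives that $S$ is ditopological, and $\phi$ restricts to topological embeddings of the subsemigroup $E(S)$ and of each maximal subgroup $G_e$ into $\N^\N$.

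For the converse, the plan is to apply \cref{cool} by producing a countable family of right congruences on $S$, each with countably many classes, whose classes form a basis for the topology on $S$. The three inputs are: a countable family $\{\sigma_n\}_{n\in\N}$ of right congruences on $E(S)$ with countably many classes forming a basis of $E(S)$, supplied by \cref{cool} applied to $E(S)\hookrightarrow\N^\N$; for each idempotent $e$, a countable neighbourhood basis $\{H^{(e)}_m\}_{m\in\N}$ of $e$ in $G_e$ consisting of open subgroups, supplied by \cref{theorem-groups}; and the $U_2$-semilattice property of $E(S)$ together with the ditopological structure on $S$. Each $\sigma_n$ lifts to a right congruence $\tilde\sigma_n$ on $S$ via $s\,\tilde\sigma_n\,t \Leftrightarrow ss^{-1}\,\sigma_n\,tt^{-1}$, which is a right congruence because in a Clifford semigroup $(sz)(sz)^{-1}=ss^{-1}\cdot zz^{-1}$. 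To refine $\tilde\sigma_n$ using the group structure, fix for each $\sigma_n$-class $C$ an anchor $y_{n,C}\in C$ and denote by $\phi_{f,e}:G_f\to G_e$ (for $f\geq e$) the structure homomorphism of the strong semilattice of groups decomposition of $S$. For each triple $(n,C,m)$ define $\rho_{n,C,m}$ to agree with $\tilde\sigma_n$ outside the class $C$ and to refine $\tilde\sigma_n$ on $C$ by also requiring, whenever $ss^{-1},tt^{-1}\geq y_{n,C}$, that $H^{(y_{n,C})}_m\phi_{ss^{-1},y_{n,C}}(s)=H^{(y_{n,C})}_m\phi_{tt^{-1},y_{n,C}}(t)$. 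That $\rho_{n,C,m}$ is a right congruence follows from the identity $\phi_{(ss^{-1})(zz^{-1}),y}(sz)=\phi_{ss^{-1},y}(s)\,\phi_{zz^{-1},y}(z)$ together with stability of left cosets under right multiplication; it has countably many classes because $[G_{y_{n,C}}:H^{(y_{n,C})}_m]$ is countable for an open subgroup in a second-countable group.

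The technical heart of the argument is verifying the basis property. Given $s\in S$ with $e=ss^{-1}$ and an open neighbourhood $O$ of $s$, the plan is to: apply the ditopological condition of \cref{def} to obtain open $U_0\ni s$ and $W_0\ni e$ witnessing it for $O$; shrink $W_0$ to some $W$ so that $fs\in U_0$ for every $f\in W\cap E(S)$ by continuity of $f\mapsto fs$ at $f=e$; pick $n$ with $[e]_{\sigma_n}\subseteq W\cap E(S)$; apply the $U_2$-semilattice property to the open set $[e]_{\sigma_n}$ at $e$ to obtain $y\in [e]_{\sigma_n}$ and a clopen ideal $I$ with $e\in E(S)\setminus I\subseteq{\uparrow}y$; arrange that this $y$ equals the pre-chosen anchor $y_{n,[e]_{\sigma_n}}$; and use continuity of left translation by $\phi_{e,y}(s)=ys\in U_0$ in $G_y$ to pick $m$ with $H^{(y)}_m\phi_{e,y}(s)\subseteq U_0\cap G_y$. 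Then for any $t\in[s]_{\rho_{n,[e]_{\sigma_n},m}}$ one has $tt^{-1}\in[e]_{\sigma_n}\cap (E(S)\setminus I)\subseteq W\cap{\uparrow}y$ (possibly after intersecting $[e]_{\sigma_n}$ further with $E(S)\setminus I$), the coset condition gives $\phi_{tt^{-1},y}(t)\in U_0$, and ditopology applied with $b=yt=\phi_{tt^{-1},y}(t)$ and $f=y\in W\cap E(S)$ delivers $t\in O$. The principal obstacle in this plan is the bookkeeping required to choose countably many anchors $\{y_{n,C}\}$ so that every $U_2$-witness encountered equals one of them; this needs a careful simultaneous choice of the $\sigma_n$ and the anchors, for example by enumerating a countable cofinal collection of pairs (clopen ideal, anchor) produced by the $U_2$-property and refining the $\sigma_n$ accordingly, while preserving countability of classes.
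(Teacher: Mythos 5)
Your overall strategy is genuinely different from the paper's: the paper does not build right congruences on $S$ directly, but instead quotes the Banakh--Pastukhova embedding (\cref{thBP}) to place $S$ inside $E(S)\times\prod_{e\in A}(H_e^0)^{A\cap{\Uparrow}e}$ for a countable $U$-dense set $A$ (countability via \cref{sep}), and then finishes with \cref{usefulnew} and \cref{prod}(i). What you are attempting is, in effect, a re-derivation of the content of \cref{thBP} through \cref{cool}. That is a legitimate plan in principle, and several of your ingredients are sound: $\tilde\sigma_n$ is indeed a right congruence because idempotents are central in a Clifford semigroup, and the coset refinement is compatible with right multiplication via $y(sz)=(ys)(yz)$. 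However, as written the argument has gaps that are not just bookkeeping.

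First, \cref{cool} requires the congruence classes to form a \emph{basis}, so every class must be open in $S$; you never address this. The class $[s]_{\rho_{n,C,m}}$ involves the conditions $tt^{-1}\in{\uparrow}y$ and $yt\in H^{(y)}_m(ys)$, and neither $\set{t\in S}{tt^{-1}\geq y}$ nor the preimage of a coset of $G_y$ is open in general: ${\uparrow}y$ need not be open in $E(S)$, and $H^{(y)}_m(ys)$ is open only in the subspace $G_y$, which need not be open in $S$. Openness can be rescued precisely when the whole class $C$ sits inside ${\uparrow}y$ (then $tt^{-1}\geq y$ is automatic and $yt\in H^{(y)}_m(ys)$ becomes the preimage of an open subset of $S$ intersected with the open set $\set{t}{tt^{-1}\in C}$), but arranging $C\subseteq{\uparrow}y_{n,C}$ is exactly your unresolved ``principal obstacle.'' Second, your proposed fix of ``intersecting $[e]_{\sigma_n}$ with $E(S)\setminus I$'' does not produce classes of a right congruence: the two-block partition $\{I,E(S)\setminus I\}$ of a semilattice by an ideal is not a right congruence (take $E=\{1,a,b,0\}$ with $ab=0$ and $I=\{0\}$; then $1,a\notin I$ but $1\cdot b\notin I$ while $a\cdot b\in I$), and the Rees congruence of $I$ has uncountably many classes in general. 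The correct use of the clopen ideal is different: since the $\sigma_n$-classes form a basis of $E(S)$ and $E(S)\setminus I$ is open, one can pass to a finer $\sigma_{n'}$ with $[e]_{\sigma_{n'}}\subseteq(E(S)\setminus I)\cap W\subseteq{\uparrow}y$ --- but then $y$ depends on the point $e$ and on $n'$, not merely on the pair $(n,C)$, so a single pre-chosen anchor per class does not suffice; one must index the congruences by anchors drawn from a countable $U$-dense set (this is where \cref{sep} and \cite[Proposition 2.4(6)]{BP} enter, and it is essentially the work done inside \cref{thBP}). Finally, a minor repair: the relation $\rho_{n,C,m}$ as you define it (coset condition imposed only ``whenever $ss^{-1},tt^{-1}\geq y$'') fails transitivity through elements with $tt^{-1}\not\geq y$; you need the dichotomy version in which related elements either both lie over $y$ and have equal cosets, or both fail to lie over $y$. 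Given these issues, the cleanest completion of your proof is to fall back on the paper's route and cite \cref{thBP} rather than rebuild it.
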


Theorems~\ref{theorem-we-need-better-labels} and~\ref{newtheorem1} imply the
following corollary which complements results of Banakh et al. about the
metrizability of Clifford topological inverse semigroups~\cite{B,Bok,BGPR}.

\begin{corollary}
    A topological Clifford semigroup $S$ is metrizable and zero-dimensional if
    one of the following conditions holds:
    \begin{enumerate}[label=\rm (\roman*)]
        \item $S$ is Hausdorff, ditopological, $E(S)$ is a $U_2$-semilattice
          which embeds topologically into $\N^\N$ and each maximal subgroup of
          $S$ embeds topologically into $\N^\N$;
        \item $S$ is Hausdorff, ditopological, $E(S)$ is a $U$-semilattice
          which embeds topologically into $I_\N$ and each maximal subgroup of
          $S$ embeds topologically into $I_\N$.
    \end{enumerate}
\end{corollary}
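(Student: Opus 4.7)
The plan is to deduce this corollary directly from Theorems~\ref{theorem-we-need-better-labels} and~\ref{newtheorem1}, together with the structural properties of the ambient monoids $\N^\N$ and $I_\N$ recalled in the introduction. The conditions in (i) and (ii) are precisely the hypotheses of Theorems~\ref{newtheorem1} and~\ref{theorem-we-need-better-labels}, respectively, so each of them already provides a topological embedding of $S$ into one of the canonical monoids.

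More explicitly, I would argue as follows. Assume (i). Then $S$ satisfies all the hypotheses of \cref{newtheorem1}: it is a Hausdorff, ditopological Clifford topological semigroup, its semilattice of idempotents $E(S)$ is a $U_2$-semilattice that embeds topologically into $\N^\N$, and each maximal subgroup of $S$ embeds topologically into $\N^\N$. Applying \cref{newtheorem1} yields a topological embedding $\phi\colon S\to \N^\N$. Similarly, if (ii) holds, then the hypotheses of \cref{theorem-we-need-better-labels} are satisfied (now with $I_\N$ in place of $\N^\N$ and with the weaker $U$-semilattice assumption), and we obtain a topological embedding $\phi\colon S\to I_\N$.

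The final step is to invoke the topological properties of the ambient monoids. As recalled in the introduction, both $\N^\N$ and $I_\N$ carry Polish zero-dimensional topologies. Hence every subspace of $\N^\N$ or of $I_\N$ is metrizable (as a subspace of a metrizable space) and zero-dimensional (as zero-dimensionality is hereditary, clopen sets restricting to clopen sets in the subspace topology). Since a topological embedding is by definition a homeomorphism onto its image, $S$ is homeomorphic to a subspace of $\N^\N$ under (i) and of $I_\N$ under (ii), and so $S$ is metrizable and zero-dimensional in either case.

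Since the work has already been done in the two theorems cited, I do not foresee any serious obstacle: the argument is essentially a combination of two applications plus the observation that metrizability and zero-dimensionality pass to subspaces. The only point worth being mindful of is to state clearly that the two cases use different theorems (the $U_2$ hypothesis is needed for the $\N^\N$ version, whereas the weaker $U$ hypothesis already suffices for $I_\N$), so that it is transparent why the two clauses of the corollary have slightly different assumptions.
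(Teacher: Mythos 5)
Your proposal is correct and matches the paper's intended argument: the paper simply states that the corollary follows from Theorems~\ref{theorem-we-need-better-labels} and~\ref{newtheorem1}, and your two-case application of those theorems followed by the observation that metrizability and zero-dimensionality are inherited by subspaces of the Polish zero-dimensional spaces $\N^\N$ and $I_\N$ is exactly that deduction.
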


\section{Proofs of the main theorems}\label{section-proofs}

In this section we prove the main results of this paper each in its own
subsection. We begin by recording some results that are useful throughout this
section.

An equivalence relation $\rho$ on a
semigroup $S$ is called a {\em right congruence} if for any $x,y,z\in S$,
$(x,y)\in\rho$ implies $(xz,yz)\in\rho$. If $\rho$ is a right congruence on a
semigroup $S$ and $x\in S$, then the set $\set{y\in S}{(x,y)\in\rho}$ is denoted by $[x]_{\rho}$.

\begin{proposition}[cf. Theorem~5.5 \cite{main}] \label{cool}
    A Hausdorff topological semigroup $S$ is topologically isomorphic to a
    subsemigroup of $\N^{\N}$ if and only if there exists a countable family
    $\set{\rho_i}{i\in\N}$ of right congruences of $S$, each having countably many
    equivalence classes, such that the family $\set{[x]_{\rho_i}}{x\in S, i\in\N}$
    is a basis for the topology on $S$.
\end{proposition}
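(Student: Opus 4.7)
\emph{Forward direction.} Assume $\phi: S \to \N^\N$ is a topological embedding. For each $n \in \N$, the relation $\sigma_n = \set{(f,g) \in \N^\N \times \N^\N}{(n)f = (n)g}$ is a right congruence on $\N^\N$ with countably many classes, because $(n)(fz) = ((n)f)z$ for any $z \in \N^\N$. For each finite $F \subseteq \N$, define $\rho_F$ on $S$ by $(s,t) \in \rho_F$ if and only if $(n)\phi(s) = (n)\phi(t)$ for all $n \in F$; each $\rho_F$ is a right congruence (since $\phi$ is a homomorphism) with at most countably many classes, and the class $[s]_{\rho_F}$ is the $\phi$-preimage of the basic open set $\set{f \in \N^\N}{(n)f = (n)\phi(s)\text{ for all } n \in F}$. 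Since $\phi$ is a topological embedding, such preimages form a basis for the topology on $S$, and there are only countably many $F$, giving the required family.

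\emph{Backward direction.} Adjoin an identity $1$ to $S$ to form the monoid $S^1$, extending each $\rho_i$ to a right congruence on $S^1$ by declaring $\{1\}$ to be its own class. Let $X = \bigsqcup_{i\in \N} S^1/\rho_i$, a countably infinite set, and define $\phi: S \to X^X$ componentwise by $([x]_{\rho_i})\phi(s) = [xs]_{\rho_i}$; this is well-defined by the right congruence property, a homomorphism by associativity, and identifying $X$ with $\N$ gives a map $\phi: S \to \N^\N$. For injectivity, if $s \neq t$ then Hausdorffness together with the basis property yields some $i$ with $[s]_{\rho_i} \neq [t]_{\rho_i}$, whence $([1]_{\rho_i})\phi(s) = [s]_{\rho_i} \neq [t]_{\rho_i} = ([1]_{\rho_i})\phi(t)$. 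For continuity, the set $\set{s \in S}{([z]_{\rho_i})\phi(s) = [w]_{\rho_i}}$ equals $\set{s \in S}{zs \in [w]_{\rho_i}}$, the preimage of the open set $[w]_{\rho_i}$ under continuous left translation by $z$ (or the identity on $S$ when $z = 1$). For openness onto image, $\phi([x]_{\rho_i}) = \phi(S) \cap \set{f \in X^X}{([1]_{\rho_i})f = [x]_{\rho_i}}$ for each basic open $[x]_{\rho_i}$ of $S$.

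The main obstacle in the backward direction is arranging for $\phi$ to be simultaneously injective and open onto its image. The device of adjoining an identity provides a distinguished element $[1]_{\rho_i} \in X$ that serves both purposes: it witnesses that distinct elements of $S$ act differently on $X$ via the coordinate $[1]_{\rho_i}$, and it allows each basic open set $[x]_{\rho_i}$ of $S$ to be recovered as the single-coordinate cylinder $\set{f \in X^X}{([1]_{\rho_i})f = [x]_{\rho_i}}$ intersected with $\phi(S)$.
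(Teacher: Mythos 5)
Your proof is correct. Be aware, though, that the paper does not actually reprove this proposition: it obtains it by citing Theorem~5.5 of the reference \cite{main} (the same statement with ``subbasis'' in place of ``basis'') together with the observation that replacing each $\sigma_n$ by $\rho_n=\bigcap_{i\leq n}\sigma_i$ upgrades a subbasis of congruence classes to a basis. Your argument is a self-contained proof of the basis formulation: in the forward direction you index the congruences by finite sets $F$ of coordinates rather than single coordinates, which directly yields a basis and absorbs the subbasis-to-basis step; in the backward direction you use the standard Cayley-type action of $S$ on $X=\bigsqcup_{i\in\N}S^1/\rho_i$, with the adjoined identity providing the distinguished point $[1]_{\rho_i}$ that simultaneously forces injectivity and lets each basic open set $[x]_{\rho_i}$ be recovered as a one-coordinate cylinder intersected with $(S)\phi$. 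This is essentially the construction underlying the cited theorem, so nothing changes mathematically; what your write-up buys is independence from the external reference, at the cost of redoing work the paper deliberately outsources. Two small points worth making explicit: the extension of $\rho_i$ to $S^1$ with $\{1\}$ as a singleton class must be checked to remain a right congruence (immediate, by cases on whether the multiplier is $1$), and in the continuity step the preimage of the cylinder $\set{f}{([z]_{\rho_i})f=[w]_{\rho_j}}$ is empty (hence still open) when $i\neq j$ or when the target class is $[1]_{\rho_i}$.
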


A right congruence $\rho$ on an inverse monoid $S$ is called {\em
Vagner-Preston} if for every $s\in S$ either: $t\in [s]_{\rho}$ implies that
$1\in [tt^{-1}]_{\rho}$; or $[st]_{\rho}=[s]_{\rho}$ for all $t\in S$.  The
following analogue of \cref{cool} for the symmetric inverse monoid $I_\N$ was
proven in~\cite[Theorem 5.21]{main}.

\begin{proposition}\label{cool1}
  A Hausdorff topological inverse monoid $S$ is topologically isomorphic to an inverse
  subsemigroup of $I_\N$ if and only if there exists a sequence
  $\set{\rho_i}{i\in\N}$ of Vagner-Preston right congruences, each having
  countably many equivalence classes, such that the family
  $\set{[s]_{\rho_i},[s]_{\rho_i}^{-1}}{s\in S, i\in \N}$ is a subbasis\footnote{The term ``subbasis'' cannot be replaced with ``basis'' here, unlike in \cref{cool}.} 
  of the topology on $S$.
\end{proposition}

As we mentioned in the introduction, for any countable semigroup $S$ the
diagonal congruence $\Delta_S=\set{(x,x)}{x\in S}$ has countably many equivalence
classes and the family $\set{[x]_{\Delta_S}}{x\in S}$ forms a basis for the
discrete topology on $S$. Combined with \cref{cool} this observation yields the
following corollary.

\begin{corollary}\label{disc}
    Each countable discrete semigroup embeds topologically into $\N^\N$.
\end{corollary}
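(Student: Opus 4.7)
The plan is to apply \cref{cool} directly with a single (repeated) right congruence, namely the diagonal $\Delta_S$. First I would fix a countable semigroup $S$ endowed with the discrete topology. The discrete topology is Hausdorff, so the hypothesis of \cref{cool} is met.

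Next I would verify that $\Delta_S = \set{(x,x)}{x\in S}$ is a right congruence: this is immediate since $\Delta_S$ is the equality relation, and $x=y$ trivially implies $xz=yz$ for all $z\in S$. Because $S$ is countable, $\Delta_S$ has countably many equivalence classes (in fact, one class per element of $S$), and for every $x\in S$ the class $[x]_{\Delta_S}$ is simply $\{x\}$.

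Then I would set $\rho_i=\Delta_S$ for every $i\in\N$, producing a countable family of right congruences each with countably many classes. The associated family $\set{[x]_{\rho_i}}{x\in S,\,i\in\N}$ is precisely $\set{\{x\}}{x\in S}$, which is a basis for the discrete topology on $S$. Applying \cref{cool} then yields a topological embedding of $S$ into $\N^\N$, which is the desired conclusion.

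There is really no main obstacle here; the statement is essentially a direct specialization of \cref{cool}, and the only thing to check is that the discrete topology is generated as a basis by singletons, which is immediate. The corollary should therefore be stated as a one-line consequence of \cref{cool} applied to the constant family $\rho_i\equiv\Delta_S$.
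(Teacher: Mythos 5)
Your proposal is correct and follows exactly the paper's argument: the paper also derives this corollary from \cref{cool} by noting that the diagonal congruence $\Delta_S$ has countably many classes whose singleton classes form a basis of the discrete topology. Your explicit remark about taking the constant family $\rho_i\equiv\Delta_S$ is a harmless, slightly more careful phrasing of the same step.
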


The following lemma will be useful for detecting semigroups topologically
embeddable into $\N^\N$.

\begin{lemma}\label{prod}
    Suppose that $S_n$ is a topological semigroup for every $n\in \N$. Then the following hold:
    \begin{enumerate}[label=\rm(\roman*)]
        \item If $S_n$ is topologically isomorphic to a subsemigroup of $\N^{\N}$ for every
              $n\in \N$, then the Tychonoff product $\prod_{n\in\N}S_n$ embeds topologically
             into $\N^{\N}$.
        \item If $S_n$ is topologically isomorphic to a subsemigroup of $I_{\N}$ for every
              $n\in\N$, then the Tychonoff product $\prod_{n\in\N}S_n$ embeds topologically
              into $I_{\N}$.
    \end{enumerate}
\end{lemma}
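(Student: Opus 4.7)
The plan is to fix a partition $\N = \bigsqcup_{n\in\N} A_n$ into countably many infinite blocks and represent the $n$-th factor of the product as transformations (resp.\ partial permutations) supported on $A_n$. Since distinct blocks are disjoint, transformations supported on distinct blocks automatically combine into a single element of $\N^\N$ (resp.\ $I_\N$), so block-wise actions assemble into a homomorphism out of $\prod_n S_n$.

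For part (i), I would fix bijections $\alpha_n : A_n \to \N$ and let $\psi_n : S_n \to A_n^{A_n}$ be the topological embedding obtained by conjugating $\phi_n$ via $\alpha_n$. Then define $\Phi : \prod_{n\in\N} S_n \to \N^\N$ by
\[
  \Phi\bigl((s_n)_{n\in\N}\bigr)|_{A_n} := \psi_n(s_n) \quad \text{for every } n\in\N,
\]
which is well-defined because the $A_n$ partition $\N$. A block-wise check shows that $\Phi$ is an injective homomorphism. To verify continuity it suffices to pull back a subbasic open set of the form $\{f\in\N^\N : (x)f=y\}$: if $x\in A_n$ and $y\notin A_n$ the preimage is empty, and otherwise it equals $\pi_n^{-1}\bigl(\psi_n^{-1}(\{g\in A_n^{A_n}:(x)g=y\})\bigr)$, which is open in $\prod_n S_n$. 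For openness onto the image it suffices to consider subbasic sets of the form $\pi_n^{-1}(U)$ with $U\subseteq S_n$ open; since $\psi_n$ is a topological embedding we can write $U = \psi_n^{-1}(V)$ for some open $V\subseteq A_n^{A_n}$, and then the cylinder $\widetilde V := \{f\in \N^\N : f|_{A_n}\in V\}$ is open in $\N^\N$ and satisfies $\Phi(\pi_n^{-1}(U)) = \widetilde V \cap \Phi\bigl(\prod_m S_m\bigr)$.

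For part (ii), the identical construction works with $I_\N$ in place of $\N^\N$: each $\psi_n(s_n)\in I_{A_n}$ is regarded as a partial permutation of $\N$ with domain and image inside $A_n$, and $\Phi\bigl((s_n)_n\bigr)$ is defined to be their union, which is a well-defined element of $I_\N$ precisely because the blocks $A_n$ are pairwise disjoint. The subbasic sets $U_{x,y}$ pull back as in part (i), while for $W_x$ and $W_x^{-1}$ with $x\in A_n$ the key observation is that membership of $\Phi\bigl((s_m)\bigr)$ in these sets depends solely on $s_n$ (via $\psi_n^{-1}$ of the analogous subbasic set of $I_{A_n}$). No step presents a genuine obstacle; the main point requiring a little care, especially in (ii) with its richer subbasis, is confirming openness of $\Phi$ onto its image by producing the correct cylinder in $\N^\N$ (resp.\ $I_\N$) for each subbasic open set in the product.
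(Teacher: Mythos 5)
Your proposal is correct and uses essentially the same idea as the paper: partition $\N$ into infinitely many infinite blocks $A_n$ and embed the product block-diagonally, with the $n$-th factor acting on $A_n$. The paper phrases this as showing that the Tychonoff power $I_\N^\N$ is topologically isomorphic to the subsemigroup $\set{f\in I_\N}{(A_i)f\subseteq A_i \text{ for all } i}$ and then composing with the given embeddings $S_n\hookrightarrow I_\N$, whereas you carry out that composition explicitly and verify the subbasic pullbacks directly, but the construction is the same.
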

\begin{proof}
    We only prove part (ii), the proof of item (i) is similar. We will show
    that the Tychonoff power $I_\N^{\N}$ embeds topologically into $I_\N$. Fix
    any partition of $\N$ into countably many infinite subsets $A_i$ for
    $i\in\N$. Consider the subsemigroup $Y=\set{f\in I_\N}{(A_i)f\subseteq A_i\text{
    for all }i\in \N}$ of $I_\N$. For each $f\in Y$ let
    $f_n=f{\restriction}_{A_n}$. Then it is straightforward to check that the
    map $\phi: Y\rightarrow \prod_{i\in \N }I_{A_i}\cong I_{\N}^\N$ defined by
    $(f)\phi=(f_n)_{n\in\N}$ is the desired topological isomorphism.
\end{proof}

A semigroup $S$ is called {\em countably prodiscrete} if $S$ can be embedded
into the Tychonoff product of countably many countable discrete semigroups.
\cref{disc} and~\cref{prod} imply the following.

\begin{corollary}\label{cnew}
    Every countably prodiscrete semigroup can be topologically embedded into $\N^{\N}$.
\end{corollary}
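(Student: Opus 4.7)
The statement is a direct chaining of the two immediately preceding results, so my plan is essentially to compose two topological embeddings and verify that the composition is itself a topological embedding.

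First, I would unfold the definition: if $S$ is countably prodiscrete, there exist countable discrete semigroups $S_n$ ($n \in \N$) and a topological embedding $\iota : S \to \prod_{n \in \N} S_n$, where the codomain carries the Tychonoff product topology. Next, by \cref{disc}, each discrete $S_n$ admits a topological embedding $\phi_n : S_n \to \N^\N$. Taking the product map $\Phi = \prod_{n \in \N} \phi_n : \prod_{n \in \N} S_n \to \prod_{n \in \N} \N^\N$ gives a topological embedding of products (this is a standard fact about Tychonoff products: a product of injective continuous maps with continuous inverses on their images is itself an injective continuous map with continuous inverse on its image). Then \cref{prod}(i) provides a topological embedding $\Psi : \prod_{n \in \N} \N^\N \to \N^\N$.

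Composing, the map $\Psi \circ \Phi \circ \iota : S \to \N^\N$ is a topological embedding, since composition preserves being an injective continuous semigroup homomorphism with continuous inverse on its image. This gives the conclusion.

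There is no real obstacle here; the only thing to be a little careful about is that \cref{prod}(i) is stated for an arbitrary countable sequence of subsemigroups of $\N^\N$, which is exactly what $\Phi(\prod_n S_n) \subseteq \prod_n \N^\N$ provides, and that the product of topological embeddings into $\prod_n \N^\N$ is again a topological embedding. Since the latter is routine from the definition of the product topology (continuity and openness are checked coordinatewise on subbasic open sets), the proof reduces to a single sentence citing \cref{disc} and \cref{prod}.
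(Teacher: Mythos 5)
Your proposal is correct and matches the paper's intent exactly: the paper offers no written proof, simply noting that \cref{disc} and \cref{prod} imply the corollary, which is precisely the chain you spell out. (A minor simplification: \cref{prod}(i) is already stated for abstract semigroups $S_n$ each topologically isomorphic to a subsemigroup of $\N^\N$, so you can apply it directly to the discrete factors without detouring through $\prod_{n\in\N}\N^\N$.)
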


\subsection{Proof of \cref{I am done inventing new labels}}

We need to show that a commutative topological semigroup $S$ embeds
topologically into $\N^\N$ if and only if $S$ is countably prodiscrete.
\begin{proof}
($\Leftarrow$)
  By~\cref{cnew} each countably prodiscrete semigroup embeds topologically into
  $\N^\N$. 
  
($\Rightarrow$)
  Fix any commutative subsemigroup $S$ of $\N^\N$. By~\cref{cool}, the
  semigroup $S$ possesses a countable family $\set{\rho_n}{n\in\N}$ of right
  congruences of $S$, each having countably many classes, such that the family
  $\set{[x]_{\rho_n}}{x\in S, n\in\N}$ is a basis of the subspace topology on $S$
  inherited from $\N^\N$. Since $S$ is commutative, for each $n\in\N$, $\rho_n$
  is a two-sided congruence. Clearly, for every $n\in\N$ the quotient semigroup
  $S_n=S/{\rho_n}$ is countable. We will show that $S$ embeds topologically
  into the Tychonoff product $\prod_{n\in\N}S_n$, where each factor is endowed
  with the discrete topology. For every $n\in\N$ let $f_n:S\rightarrow S_n$ be
  the homomorphism associated with the congruence $\rho_n$. Since each
  equivalence class $[x]_{\rho_n}$ is clopen, the homomorphism $f_n$ (onto the
  discrete semigroup $S_n$) is continuous. It follows that the diagonal map
  $\Phi:S\rightarrow \prod_{n\in\N}S_n$, $(x)\Phi=((x)f_n)_{n\in\N}$ is
  continuous as well. Since $\N ^ \N$ is Hausdorff, $S$ is Hausdorff and so 
  $\bigcap_{n\in\N}[x]_{\rho_n}=\{x\}$ for every $x\in S$. Thus,  $\Phi$ is
  injective. To show that $\Phi$ is a topological embedding, fix any open basic
  set $[x]_{\rho_n}$ of $S$. The image $$([x]_{\rho_n})\Phi=\set{(z_i)_{i\in\N}\in
  (S)\Phi}{z_n=(x)f_n}$$ is open in $(S)\Phi$. Hence the map $\Phi$ is open
  onto its image, which implies that $\Phi$ is a topological embedding.
\end{proof}

Recall that for a semigroup $S$ by $S^1$ and $S^0$ we denote the semigroup $S$
with adjoined external identity and zero, respectively. If $S$ is a topological
semigroup, then, unless otherwise stated explicitly, we write $S^1$ to mean the topological semigroup obtained from $S$ by extending the topology of $S$ and where the adjoined identity is isolated. An analogous statement holds
for $S ^ 0$. In order to prove \cref{I am done inventing new labels1} we need
the following lemmas.


\begin{lemma}\label{usefulnew}
    For a topological semigroup $S$ the following assertions are equivalent:
    \begin{enumerate}[label=\rm(\roman*)]
        \item $S$ embeds topologically into $\N^\N$;
        \item $S^1$ embeds topologically into $\N^\N$;
        \item $S^0$ embeds topologically into $\N^\N$.
    \end{enumerate}
\end{lemma}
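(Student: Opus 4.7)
The plan is to prove the two non-trivial implications via \cref{cool}, and to dispatch the other two via the observation that the adjoined element is isolated. For the implications $\rm (ii)\Rightarrow(i)$ and $\rm (iii)\Rightarrow(i)$: by the convention stated just before the lemma, the adjoined identity in $S^1$ and the adjoined zero in $S^0$ are isolated points, so $S$ is an open (topological) subsemigroup of each, and the inclusion $S\hookrightarrow S^1$ (respectively $S\hookrightarrow S^0$) is a topological embedding of topological semigroups. Composing with a topological embedding of $S^1$ (respectively $S^0$) into $\N^\N$ gives the required embedding of $S$.

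For $\rm (i)\Rightarrow(ii)$ and $\rm (i)\Rightarrow(iii)$, assume that $S$ embeds topologically into $\N^\N$. By \cref{cool}, there exists a countable family $\set{\rho_n}{n\in\N}$ of right congruences on $S$, each with countably many classes, such that $\set{[x]_{\rho_n}}{x\in S,\ n\in\N}$ is a basis for the topology of $S$. Writing $\ast$ for the adjoined element ($1$ or $0$) and $S^\ast$ for the corresponding extension, I would extend each $\rho_n$ to a relation $\rho_n^\ast$ on $S^\ast$ by leaving $\rho_n$ unchanged on $S\times S$ and adjoining the single new pair $(\ast,\ast)$; equivalently, $\{\ast\}$ becomes a new equivalence class. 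A short case analysis shows that $\rho_n^\ast$ is a right congruence on $S^\ast$: when $\ast=1$ this uses $x\cdot 1=x$, and when $\ast=0$ it uses the absorbing law $x\cdot 0=0\cdot z=0$, which makes the non-trivial cases immediate.

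It then remains to verify that $\set{[y]_{\rho_n^\ast}}{y\in S^\ast,\ n\in\N}$ is a basis for the topology of $S^\ast$. This family is exactly the original basis of $S$ together with $\{\ast\}$, and $\{\ast\}$ is open in $S^\ast$ because $\ast$ is isolated, so this is clear. Each $\rho_n^\ast$ still has countably many classes, and $S^\ast$ is Hausdorff because $S$ is Hausdorff and $\ast$ is isolated. Applying \cref{cool} in the converse direction then yields a topological embedding of $S^\ast$ into $\N^\N$. The only mildly delicate point is the right-congruence verification, and this is entirely routine, so I do not anticipate any real obstacle.
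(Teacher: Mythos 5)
Your argument is correct, but it takes a genuinely different route from the paper. The paper proves (i)~$\Rightarrow$~(ii) and (i)~$\Rightarrow$~(iii) by an explicit construction: it identifies $S$ with a subsemigroup $T$ of $(2\N)^{2\N}\cong\N^\N$, pads each $t\in T$ to a transformation $t'$ of all of $\N$ by adjoining $\{(1,1)\}\cup\set{(2n+1,3)}{n\in\N\setminus\{0\}}$, and then adjoins $\id_\N$ (playing the role of the identity) or the map $z=\set{(n,1)}{n\in\N}$ (playing the role of the zero) to the resulting subsemigroup $T'$. You instead route everything through the abstract characterisation of \cref{cool}: pull back a countable family of right congruences with countably many classes whose classes form a basis, extend each by declaring the adjoined element $\ast$ a new singleton class, verify the right-congruence property (which, as you note, is immediate from $x\cdot 1=x$ in the identity case and from absorption in the zero case), observe that the new class $\{\ast\}$ is open since $\ast$ is isolated, and apply \cref{cool} in the reverse direction. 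All the verifications you sketch do go through — in particular $S^\ast$ is Hausdorff, each extended congruence still has countably many classes, and the extended family is exactly the old basis together with $\{\ast\}$, which is a basis for the extended topology — so your proof is complete. The trade-off: your argument is shorter and avoids the bookkeeping of the explicit padding, while the paper's explicit construction transfers verbatim to $I_\N$ (giving \cref{usefulnew1} "similarly"), whereas the congruence route for $I_\N$ would go through \cref{cool1} and require checking that the extended congruences remain Vagner--Preston and handling the subbasis (rather than basis) formulation, which is less immediate.
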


\begin{proof}
    The implications (ii) $\Rightarrow$ (i) and (iii) $\Rightarrow$ (i) are
    trivial. Let $S$ be a topological semigroup which embeds topologically into
    $\N^\N$. Put $2\N=\set{2n}{n\in\N}$. Since $2\N^{2\N}$ is topologically
    isomorphic to $\N^\N$ we obtain that $S$ is topologically isomorphic to a
    subsemigroup $T$ of ${2\N}^{2\N}$. For each element $t\in T$ let $$t'=t\cup
    \{(1,1)\} \cup \set{(2n+1,3)}{n\in\N\setminus\{0\}}.$$ Clearly, $T'=\set{t'}{t\in T}$ is a subsemigroup of $\N^\N$. It is easy to check that $S^1$ is
    topologically isomorphic to a subsemigroup $T'\cup\{\id_\N\}$
    of $\N^\N$, where by $\id_\N$ we denote the identity
    permutation of $\N$. Hence the implication (i) $\Rightarrow$ (ii) holds.
    Also, it is easy to check that $S^0$ is topologically isomorphic to a
    subsemigroup $T'\cup\{z\}$ of $\N^\N$, where $z=\set{(n,1)}{n\in\N}$. Hence
    the implication (i) $\Rightarrow$ (iii) holds.
\end{proof}

Similarly one can prove the following lemma.
\begin{lemma}\label{usefulnew1}
    For a topological semigroup $S$ the following assertions are equivalent:
    \begin{enumerate}[label=\rm(\roman*)]
        \item $S$ embeds topologically into $I_\N$;
        \item $S^1$ embeds topologically into $I_\N$;
        \item $S^0$ embeds topologically into $I_\N$.
    \end{enumerate}
\end{lemma}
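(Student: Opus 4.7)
The plan is to mirror the argument for \cref{usefulnew} line by line, replacing the pointwise gadget on odd indices (needed there because $\N^\N$ consists of total functions) by the simpler trick of extending each partial bijection by the single pair $(1,1)$. The implications $(ii)\Rightarrow(i)$ and $(iii)\Rightarrow(i)$ are immediate, since $S$ sits in $S^1$ and in $S^0$ as a subsemigroup carrying its original topology. So the real content is in $(i)\Rightarrow(ii)$ and $(i)\Rightarrow(iii)$.

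For these, I would begin by exploiting that the natural bijection $\N\to 2\N$ induces a topological isomorphism $I_\N\cong I_{2\N}$, so we may assume $S$ is topologically isomorphic to a subsemigroup $T\subseteq I_{2\N}$. For each $t\in T$, set $t'=t\cup\{(1,1)\}\in I_\N$ and $T'=\set{t'}{t\in T}$. Since $1\in\dom(t')$ with $(1)t'=1$ while the behaviour of $t'$ on $2\N$ is unchanged, a direct check gives $(t')(s')=(ts)'$, so $T'$ is a subsemigroup of $I_\N$ and $t\mapsto t'$ is a semigroup isomorphism. Checking that this map is also a homeomorphism reduces to noting that the subbasic sets $U_{x,y},W_x,W_x^{-1}$ of $I_\N$ pull back either to the corresponding subbasic set of $T\subseteq I_{2\N}$ (when the indices lie in $2\N$), to $\emptyset$, or to all of $T$.

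For $(i)\Rightarrow(ii)$ I would take $T'\cup\{\id_\N\}$ as the image of $S^1$: the element $\id_\N$ is a two-sided identity on $T'$, differs from every $t'\in T'$ because $3\in\dom(\id_\N)\setminus\dom(t')$, and is isolated via $U_{3,3}\cap(T'\cup\{\id_\N\})=\{\id_\N\}$. For $(i)\Rightarrow(iii)$ I would take $T'\cup\{\emptyset\}$ as the image of $S^0$: the empty partial permutation is automatically a two-sided zero in $I_\N$, is distinct from every $t'$ (all of which contain $(1,1)$), and is isolated via $W_1\cap(T'\cup\{\emptyset\})=\{\emptyset\}$. The only step that requires even mild care is verifying that $t\mapsto t'$ is a homeomorphism, and even there the asymmetry between ``even coordinates'' and the single pair $(1,1)$ makes the subbasic computation immediate; I do not expect any genuine obstacle beyond this bookkeeping, as the partial-map setting of $I_\N$ actually makes the construction cleaner than its $\N^\N$ analogue.
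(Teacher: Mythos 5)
Your proposal is correct and is precisely the adaptation the paper intends: the paper proves the $\N^\N$ version (Lemma~\ref{usefulnew}) explicitly and then states that Lemma~\ref{usefulnew1} is proved ``similarly,'' and your argument carries out that adaptation faithfully, rightly observing that in $I_\N$ the single pair $(1,1)$ suffices in place of the total-function padding on odd coordinates, with $\id_\N$ and $\emptyset$ serving as the adjoined isolated identity and zero. No gaps.
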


\begin{lemma}\label{usefulnew2}
    Every quotient of a commutative inverse monoid by a Vagner-Preston
    congruence is a group or a group with zero adjoined.
\end{lemma}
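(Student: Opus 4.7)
The plan is to use the commutativity of $S$ to reduce to studying the quotient semigroup $S/\rho$ (which makes sense since in a commutative semigroup every right congruence is a two-sided congruence) and then use the Vagner-Preston dichotomy to classify the $\rho$-classes as either \emph{group-like} (Case~A of the definition: $t\in[s]_\rho$ implies $1\in [tt^{-1}]_\rho$) or \emph{zero-like} (Case~B: $[st]_\rho=[s]_\rho$ for every $t\in S$). A zero-like class is exactly an absorbing element in $S/\rho$; if $[s]_\rho$ is group-like then $[s]_\rho[s^{-1}]_\rho=[ss^{-1}]_\rho=[1]_\rho$, so $[s^{-1}]_\rho$ is a two-sided inverse of $[s]_\rho$ in the monoid $S/\rho$ whose identity is $[1]_\rho$.

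Next I would check uniqueness of a zero-like class and mutual exclusivity of the two cases (away from the trivial quotient). Two zero-like classes $z_1,z_2$ satisfy $z_1=z_1z_2=z_2$, so there is at most one. If $[1]_\rho$ were zero-like, then $[t]_\rho=[1\cdot t]_\rho=[1]_\rho$ for every $t$, making $S/\rho$ trivial; since the trivial semigroup is simultaneously a group and a group with zero adjoined, we may assume henceforth that $[1]_\rho$ is group-like. Under this assumption no class can satisfy both Case~A and Case~B, because such a class would be an absorbing element with a two-sided inverse, again forcing triviality.

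The key step is to verify that the set $G$ of group-like classes forms a subgroup of $S/\rho$ whose complement is either empty or the single zero-like class $z$. For closure under inversion, if $[s]_\rho\in G$ but $[s^{-1}]_\rho\notin G$, then $[s^{-1}]_\rho$ is zero-like, so $[s^{-1}]_\rho=[s^{-1}s]_\rho=[ss^{-1}]_\rho=[1]_\rho$, contradicting that $[1]_\rho$ is group-like. For closure under multiplication, given $[s]_\rho,[t]_\rho\in G$, if $[st]_\rho$ were zero-like then, using Case~A for $t$ (so $tt^{-1}\rho 1$),
\[
[st]_\rho=[st]_\rho\cdot[t^{-1}]_\rho=[stt^{-1}]_\rho=[s]_\rho,
\]
placing $[s]_\rho$ in the unique zero-like class and contradicting $[s]_\rho\in G$. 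Together with the identity $[1]_\rho$ and the inverses produced above, this shows $G$ is a group.

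Finally I would conclude by cases on whether a zero-like class exists: if not, $S/\rho=G$ is a group; if so, $S/\rho=G\cup\{z\}$ with $z$ absorbing, which is precisely $G^0$. The main obstacle here is the bookkeeping around the edge cases where Case~A and Case~B might a priori overlap and where $[1]_\rho$ itself could be absorbing; once those are disposed of as trivial, the rest is a routine verification using commutativity of $S$ and the definition of a Vagner-Preston congruence.
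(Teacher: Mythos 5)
Your proof is correct and follows essentially the same route as the paper: both exploit commutativity to make $\rho$ two-sided and then split the classes according to the Vagner--Preston dichotomy into a (unique, absorbing) zero-like class and the remaining classes, which carry the group structure. The only difference is cosmetic: the paper observes that $S/\rho$ is a Clifford semigroup with one or two idempotents and invokes that structure, whereas you verify closure, inverses, and the absorbing property by hand, which is a perfectly good self-contained substitute.
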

\begin{proof}
    Let $S$ be a commutative inverse monoid and $\rho$ be a Vagner-Preston
    congruence on $S$. Clearly, $S$ is a Clifford semigroup and so is the
    quotient semigroup $S/\rho$. If for each $x\in S$, $xx^{-1}\in [1]_{\rho}$,
    then the Clifford semigroup $S/\rho$ contains the unique idempotent
    $[1]_{\rho}$, which implies that $S/\rho$ is a group. Assume that there
    exists an element $s\in S$ such that $ss^{-1}\notin [1]_{\rho}$. Fix any
    element $t\in S$ such that $tt^{-1}\notin [1]_{\rho}$. Since $\rho$ is a
    Vagner-Preston congruence and the semigroup $S$ is commutative, we get that
    $[s]_{\rho}$ and $[t]_{\rho}$ are two-sided ideals in $S$. Then
    $[s]_{\rho}\cap [t]_{\rho}\neq \emptyset$, which shows that
    $[t]_{\rho}=[s]_{\rho}$. Thus, for each $x\in S$ either $xx^{-1}\in
    [1]_{\rho}$ or $x\in [s]_{\rho}$. At this point it is easy to see that the
    quotient semigroup $S/\rho$ contains only two idempotents $[1]_{\rho}$ and
    $[s]_{\rho}$. Moreover, $[1]_{\rho}$ is an identity of $S/{\rho}$ and
    $[s]_\rho$ is zero of $S/{\rho}$. Hence $S/{\rho}$ is a group with adjoined
    zero.
\end{proof}

\subsection{Proof of \cref{I am done inventing new labels1}}

We need to show that a commutative topological inverse semigroup $S$ is
topologically isomorphic to a subsemigroup of $I_\N$ if and only if $S$ embeds
into a Tychonoff product $\prod_{n\in\N}G_n^0$, where $G_n$ is a discrete
countable group for every $n\in \N$.
\begin{proof}
($\Leftarrow$)
    Clearly, for each group $G$ the diagonal congruence on the monoid $G^0$ is a
    Vagner-Preston congruence. \cref{cool1} implies that for every countable group
    $G$ the discrete monoid $G^0$ embeds topologically into $I_\N$. \cref{prod}(ii)
    implies that for every countable family $\set{G_n}{n\in\N}$ of countable discrete
    groups the Tychonoff product $\prod_{n\in\N}G_n^0$ embeds topologically into
    $I_\N$. It follows that each topological subsemigroup $S$ of
    $\prod_{n\in\N}G_n^0$ embeds topologically into $I_\N$.

($\Rightarrow$)
    Consider a commutative inverse subsemigroup $S$ of $I_\N$ and assume that $S$
    carries the subspace topology inherited from $I_\N$. By \cref{usefulnew1},
    $S^1$ embeds topologically into $I_\N$. \cref{cool1} implies that the monoid
    $S^1$ possesses a countable family $\set{\rho_n}{n\in\N}$ of Vagner-Preston right
    congruences, each having countably many classes, such that the family
    $\set{[x]_{\rho_n},[x]_{\rho_n}^{-1}}{x\in S^1, n\in\N}$ is a subbasis of the
    topology on $S^1$.
    By the commutativity of $S^1$, each $\rho_n$ is a congruence on $S^1$. It
    follows that $[x]_{\rho_n}^{-1}=[x^{-1}]_{\rho_n}$ for every $x\in S^1$ and
    $n\in\N$. Hence the family $\set{[x]_{\rho_n}}{x\in S,n\in\N}$ is a subbasis of
    the topology on $S^1$. For each $n\in\N$ let $\mu_n=\bigcap_{i\leq n} \rho_i$.
    It is straightforward to check that $\set{[x]_{\mu_n}}{x\in S,n\in\N}$ is a basis
    of the topology on $S^1$. By \cref{usefulnew2}, for each $n\in\N$ the quotient
    semigroup $S^1/{\mu_n}$ is isomorphic either to $G_n$ or $G_n^0$ for some group
    $G_n$. By the definition of $\mu_n$ the group $G_n$ is countable. For every
    $n\in\N$ let $f_n:S^1\rightarrow G_n^0$ be the homomorphism associated with the
    congruence $\mu_n$. Since each equivalence class $[x]_{\mu_n}$ is clopen, we
    get that the homomorphism $f_n$ (into a discrete monoid $G_n^0$) is continuous.
    Similarly as in the proof of \cref{I am done inventing new labels} it can be
    checked that the diagonal map $\Phi:S^1\rightarrow \prod_{n\in\N}G_n^0$,
    $(x)\Phi=((x)f_n)_{n\in\N}$ is a topological embedding of the monoid $S^1$ into
    the Tychonoff product $\prod_{n\in\N}G_n^0$, where each factor is discrete. It
    follows that the topological semigroup $S$ is topologically isomorphic to a
    subsemigroup of $\prod_{n\in\N}G_n^0$.
\end{proof}

\subsection{Proof of \cref{theorem-compact}}
We will show that for a compact topological semigroup $S$, 
    the following conditions are equivalent:
    \begin{enumerate}[label=\rm (\roman*)]
    \item $S$ is homeomorphic to a subspace of $\N ^ \N$ (and $I_{\N})$;
    \item $S$ embeds topologically into $\N^\N$;
    \item $S$ is metrizable and totally disconnected.
\end{enumerate}
\begin{proof}
Let $S$ be a compact topological semigroup. The implications (ii) $\Rightarrow$ (i) and (i) $\Rightarrow$ (iii) are obvious.

(iii) $\Rightarrow$ (ii).
    The celebrated result of Numakura~\cite[Theorem 1]{Num} states that each
    Hausdorff compact totally disconnected topological semigroup $S$ is profinite,
    i.e. can be embedded into a Tychonoff product of finite discrete semigroups.
    Moreover, from the proof of~\cite[Theorem 1]{Num} follows that if the diagonal
    $\Delta=\set{(x,x)}{x\in S}$ is a $G_{\delta}$ subset of $S{\times}S$, then $S$
    can be embedded into the Tychonoff product of countably many finite discrete
    semigroups. Since the diagonal of any metrizable space $X$ is a $G_{\delta}$
    subset of $X{\times}X$, the result of Numakura implies that each compact
    metrizable totally disconnected topological semigroup $S$ is countably prodiscrete.
    By~\cref{cnew}, $S$ embeds topologically into $\N^\N$.
\end{proof}

\subsection{Proof of \cref{theorem-compact-2}}

We need to prove that for any compact topological inverse semigroup $S$ the
following conditions are equivalent:
\begin{enumerate}[label=\rm (\roman*)]
    \item $S$ is homeomorphic to a subspace of $\N ^ \N$ (and $I_{\N}$);
    \item $S$ embeds topologically into $I_{\N}$;
    \item $S$ embeds topologically into $\N^\N$;
    \item $S$ is metrizable and totally disconnected.
\end{enumerate}

\begin{proof}
   The equivalences (i) $\Leftrightarrow$ (iii) and (iii) $\Leftrightarrow$ (iv) are established in
    \cref{theorem-compact}.

    The implication (ii) $\Rightarrow$ (iv) follows from the fact that $I_\N$ is
    metrizable and totally disconnected.

    (iv) $\Rightarrow$ (ii).
    As we already showed in the proof of~\cref{theorem-compact}, each totally
    disconnected compact metrizable topological semigroup $S$ embeds into a
    Tychonoff product $\prod_{n\in\N}Y_{n}$ of finite discrete semigroups
    $Y_n$, $n\in\N$. For each $n\in \N$ consider the projection
    $(S)\pi_{n}\subseteq Y_n$ of $S$ on the $n$-th coordinate. Clearly, the
    semigroup $(S)\pi_n$ is inverse. By the Wagner-Preston Theorem, each
    countable inverse semigroup embeds into $I_\N$. Since for each $n\in\N$ the
    semigroup $(S)\pi_n$ is finite, the discrete semigroup $(S)\pi_n$ embeds
    topologically into $I_\N$. Taking into account that $S\subseteq
    \prod_{n\in\N}(S)\pi_n$, \cref{prod}(ii) implies that $S$ is
    topologically isomorphic to a subsemigroup of $I_\N$.
\end{proof}

Besides the canonical topology, $I_\N$ can be endowed with a Polish semigroup
topology $\Tau$ which is generated by the subbasis consisting of the sets
$U_{x, y}$ and $W_x$ (defined in the introduction), where $x,y\in \N$. The
topology $\Tau$ was investigated in~\cite{main} under the name $\mathcal I_2$, and is used in the proof of the next proposition.

\begin{proposition}\label{embcl}
    If a topological Clifford semigroup $S$ embeds topologically into $I_\N$,
    then $S$ is topologically isomorphic to a subsemigroup of $\N^\N$.
\end{proposition}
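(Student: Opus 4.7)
The plan is to factor the desired embedding through the topology $\Tau$ on $I_\N$, via two observations: that $(I_\N,\Tau)$ topologically embeds as a semigroup into $\N^\N$, and that on any Clifford subsemigroup of $I_\N$ the subspace topology inherited from the canonical topology already coincides with the subspace topology inherited from $\Tau$. Since $S$ is topologically isomorphic to its image in $I_\N$ (which is itself a Clifford subsemigroup), these two facts together yield the conclusion.

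For the first observation, I would fix a symbol $*\notin\N$, identify $\N^\N$ with the isomorphic topological semigroup $(\N\cup\{*\})^{\N\cup\{*\}}$, and define $\phi:I_\N\to(\N\cup\{*\})^{\N\cup\{*\}}$ by extending every $f\in I_\N$ so that each point of $(\N\cup\{*\})\setminus\dom(f)$ is sent to $*$. Injectivity is immediate; the homomorphism property is verified by a three-case analysis according to whether $x\notin\dom(f)$, or $x\in\dom(f)$ with $(x)f\notin\dom(g)$, or $x\in\dom(fg)$. To see that $\phi$ topologically embeds $(I_\N,\Tau)$, I would compute the preimages of the subbasic sets $\set{h}{(x)h=y}$ of $(\N\cup\{*\})^{\N\cup\{*\}}$: they are $U_{x,y}$ when $x,y\in\N$; $W_x$ when $x\in\N$ and $y=*$; empty when $x=*$ and $y\in\N$; and the whole of $I_\N$ when $x=y=*$. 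Both continuity and openness onto the image follow at once, since every subbasic open set of $\Tau$ also arises as such a preimage.

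For the second observation, let $S$ be any Clifford subsemigroup of $I_\N$. For each $s\in S$ the element $ss^{-1}=s^{-1}s$ is simultaneously $\id_{\dom(s)}$ and $\id_{\im(s)}$, which forces $\dom(s)=\im(s)$. Consequently, for every $x\in\N$,
$$W_x^{-1}\cap S=\set{s\in S}{x\notin\im(s)}=\set{s\in S}{x\notin\dom(s)}=W_x\cap S,$$
so the canonical subbasis of $I_\N$ and the $\Tau$-subbasis induce the same family of traces on $S$. Hence the inclusion $S\hookrightarrow(I_\N,\Tau)$ is a topological embedding, and composing with $\phi$ embeds $S$ topologically into $\N^\N$.

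The only non-routine ingredient is the observation $\dom(s)=\im(s)$ for every element of a Clifford subsemigroup of $I_\N$; this is precisely what renders the $W_x^{-1}$ subbasic sets redundant on $S$ and brings the situation within the scope of the weaker topology $\Tau$. Everything else is bookkeeping with subbases. As a byproduct one sees that inversion is automatically continuous on such $S$, foreshadowing \cref{ditop}.
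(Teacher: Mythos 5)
Your proof is correct and takes essentially the same route as the paper: the paper likewise first notes that $\dom(s)=\im(s)$ for every element of a Clifford subsemigroup of $I_\N$ (so the canonical topology and $\Tau$ agree on $S$), and then embeds $(I_\N,\Tau)$ into $\N^\N$ by sending each $f$ to the total map collapsing the complement of $\dom(f)$ to a single sink point (the paper uses $0$ after shifting $\N$ by $1$, where you adjoin a fresh symbol $*$ --- the same construction). Your write-up just spells out the ``routine verification'' that the paper omits.
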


\begin{proof}
    Observe that each Clifford subsemigroup $S$ of $I_\N$ consists of partial
    permutations of subsets of $\N$, i.e.  for any $x\in S$, $\dom(x)=\im(x)$.
    Thus, for any Clifford subsemigroup $S$ of $I_\N$ the subspace topology on
    $S$ inherited from the canonical topology on $I_\N$ coincides with the
    subspace topology inherited from $\Tau$ (as defined above). Hence it remains to check that
    $(I_\N,\Tau)$ embeds topologically into $\N^\N$.  A routine verification
    shows that the map $\phi:I_{\N}\rightarrow \N ^ \N$ defined by
    \[
        (g)\phi = \set{(x + 1, y + 1)}{(x, y) \in g}\cup\set{(x + 1,0)}{x\in \N\setminus \dom(g)} \cup \{(0, 0)\}
    \]
    is a topological embedding of $(I_{\N},\Tau)$ into $\N ^ \N$.
\end{proof}

\begin{lemma}\label{ultranew}
    Let $G$ be a subgroup of $\N^\N$ and $e_G$ be the identity of $G$. Then the
    following conditions hold:
    \begin{enumerate}[label=\rm (\roman*)]
        \item $(x)e_G=x$ for every $x\in\im(e_G)$;
        \item $\im(g)=\im(f)$ for any $f,g\in G$;

        \item for any $g\in G$ the restriction $g{\restriction}_{\im(g)}$ is a
          permutation of $\im(g)$;
        \item for any $g\in G$ the restriction $g^{-1}{\restriction}_{\im(g)}$
          is equal to the inverse permutation of $g{\restriction}_{\im(g)}$ in
          $S_{\im(g)}$;
        \item for any $f,g\in G$, $f=g$ if and only if
          $f{\restriction}_{\im(f)}=g{\restriction}_{\im(g)}$;
        \item
          if $f\in G$, $x\in\N$ and $x'\in\im(f)$ be such that $(x)f=(x')f$,
          then for every $g\in G$, $(x)f=(x)g$ if and only if $(x')f=(x')g$.
    \end{enumerate}
\end{lemma}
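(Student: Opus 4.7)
The plan is to prove the six parts in order, using the idempotent $e_G$ as the pivotal object throughout. Since $G$ is a group inside $\N^\N$ with identity $e_G$, the relations $e_G e_G = e_G$, $g = e_G g = g e_G$, and $g g^{-1} = g^{-1} g = e_G$ hold as equalities of partial (in fact total) functions on $\N$, and all the assertions will follow by applying these to specific arguments.

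For (i), I would observe that for every $x \in \N$, $(x)e_G = (x)(e_G e_G) = ((x)e_G) e_G$, so every point of $\im(e_G)$ is fixed by $e_G$. For (ii), I would note that $\im(g) = \im(g e_G) \subseteq \im(e_G)$, and symmetrically $\im(e_G) = \im(g g^{-1}) \subseteq \im(g)$, so $\im(g) = \im(e_G)$, independent of $g$. For (iii), once we know $\im(g) = \im(e_G)$, surjectivity of $g\restriction_{\im(g)}$ follows from the identity $(x)g = ((x)e_G) g$ together with $(x)e_G \in \im(e_G) = \im(g)$; injectivity follows because if $x_1, x_2 \in \im(g)$ satisfy $(x_1)g = (x_2)g$, then applying $g^{-1}$ gives $(x_1)e_G = (x_2)e_G$, and by (i) this is $x_1 = x_2$. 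For (iv), using (ii) and (iii), both $g\restriction_{\im(g)}$ and $g^{-1}\restriction_{\im(g)}$ are permutations of $\im(g)$, and for $x \in \im(g)$ direct computation gives $((x)g)g^{-1} = (x)(g g^{-1}) = (x)e_G = x$ by (i), so the restrictions compose to the identity on $\im(g)$.

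For (v), the nontrivial direction is that agreement on the common image $\im(f) = \im(g)$ forces global agreement. The identity $f = e_G f$ shows that for every $x \in \N$, $(x)f = ((x)e_G) f$, and analogously $(x)g = ((x)e_G) g$; since $(x)e_G \in \im(e_G) = \im(f) = \im(g)$ and $f, g$ agree there, we conclude $(x)f = (x)g$. For (vi), the key step is to observe that if $x' \in \im(f)$ and $(x)f = (x')f$, then $x' = (x)e_G$: indeed $x' = (x')e_G = ((x')f)f^{-1} = ((x)f)f^{-1} = (x)e_G$. From this, for any $g \in G$, $(x')g = ((x)e_G)g = (x)(e_G g) = (x)g$, and the claimed biconditional is immediate.

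I do not expect a genuine obstacle here; every item reduces to elementary manipulations of the relations $e_G^2 = e_G$, $e_G g = g e_G = g$, and $g g^{-1} = g^{-1} g = e_G$, combined with the fact that these are equalities of functions on $\N$. The only small subtlety worth being careful about is that, since elements of $G \subseteq \N^\N$ are total functions, one must not confuse $\im(g)$ with a semigroup-theoretic ``domain of definition''; the correct reading of all restrictions is as functions defined on subsets of $\N$, and the algebra goes through without any continuity or topological input.
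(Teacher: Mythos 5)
Your proof is correct and follows essentially the same elementary manipulations as the paper's: everything is reduced to the function equalities $e_G=gg^{-1}=g^{-1}g$, $g=e_Gg=ge_G$ and item (i), with the same injectivity/surjectivity arguments for (iii) and the same key identity $x'=(x')e_G=((x)f)f^{-1}=(x)e_G$ driving (vi). The only real divergence is in (v), where the paper shows $gf^{-1}$ is an idempotent of $G$ and hence equals $e_G$, while you argue directly via $(x)f=((x)e_G)f=((x)e_G)g=(x)g$; both work (yours is arguably slightly more direct), and the lone cosmetic slip is writing $\im(gg^{-1})\subseteq\im(g)$ in (ii), which under the paper's left-to-right composition should read $\im(g^{-1}g)\subseteq\im(g)$ --- harmless since the two products coincide.
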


\begin{proof}
    (i) Since $e_G$ is the identity of $G$ we obtain that $((x)e_G)e_G=(x)e_G$
    for every $x\in\N$. Then $(y)e_G=y$ for each $y\in\im(e_G)$.

    (ii) Since  $f=fg^{-1}g$ we obtain that $\im(f)\subseteq \im(g)$.
    Similarly, the equality $g=gf^{-1}f$ implies that $\im(g)\subseteq \im(f)$.
    Hence $\im(g)=\im(f)$ for each $f,g\in G$.

    (iii) Fix $g\in G$ and $x,y\in\im(g)$ such that \((x)g=(y)g\).
    By item (ii), $x,y\in \im(e_G)$.
    By item (i), $$x=(x)e_G=((x)g)g^{-1}=((y)g)g^{-1}=(y)e_G=y,$$ and so 
    the map $g{\restriction}_{\im(g)}$ is injective. Item (i) implies that
    $\im(gg)=\im(g)$. It follows that $g{\restriction}_{\im(g)}$ is a permutation.

    (iv) Follows from items (i), (ii) and (iii).

    (v) Assume that $f{\restriction}_{\im(f)}=g{\restriction}_{\im(g)}$ for
    some $f,g\in G$. Then $(gf^{-1})\restriction_{\im(g)}$ is the identity
    permutation of $\im(g)$. It is straightforward to check that $gf^{-1}$ is
    an idempotent. Since $E(\N^\N)\cap G=\{e_G\}$, we get that $gf^{-1}=e_G$,
    and so $f=g$.

    (vi) First assume that $(x')f=(x)f=(x)g$ for some $f,g\in G$, $x\in\N$ and
    $x'\in\im(f)$. Since $x'\in\im(g)$ we obtain the following:
    $$(x')g= ((x')e_G)g=((x')f)f^{-1}g= ((x)ff^{-1})g= ((x)e_G)g=((x)g)e_G=(x)g=(x')f.$$
    Assume that $(x)f=(x')f=(x')g$ for some $f,g\in G$, $x\in\N$ and $x'\in\im(f)$. Then
    \[
      (x)g = ((x)e_G)g = ((x)f) f^{-1}g = ((x')ff^{-1})g = ((x')e_G)g = (x')g =
      (x)f.\qedhere
    \]
\end{proof}

\subsection{Proof of \cref{theorem-groups}}

We need to prove that for a topological group $G$ the following conditions are
equivalent:
\begin{enumerate}[label=\rm (\roman*)]
  \item $G$ embeds topologically into $S_{\N}$;
  \item $G$ embeds topologically into $I_\N$;
  \item $G$ embeds topologically into $\N^\N$;
  \item $G$ is second-countable and has a neighbourhood basis of the identity
    consisting of open subgroups.
\end{enumerate}
\begin{proof}
    The implication (i) $\Rightarrow$ (ii) is trivial; the implication (ii)
    $\Rightarrow$ (iii) follows from \cref{embcl};
    and the equivalence (i) $\Leftrightarrow$ (iv) is well-known and follows
    from~\cite[Theorem 5.5]{main}.

    (iii) $\Rightarrow$ (i). Let \(G\) be a subgroup of \(\N^\N\). Let
    $X=\im(g)$ for some $g\in G$. \cref{ultranew}(ii) implies that the set $X$
    does not depend on the choice of $g$.
    By \cref{ultranew}(v), $G$ acts faithfully by permutations on the set $X$. This
    gives a natural algebraic embedding \(\phi: G \to S_{X}\) defined by
    $(g)\phi=g{\restriction}_{\im(g)}$. It remains to show that this embedding is
    topological.
    A subbasic open set in the topology on \((G)\phi\) has the form
    \[\set{g \in (G)\phi}{(x, y)\in g}\]
    for some \(x, y\in X\). By the definition of $\phi$, $$(\set{g \in
            (G)\phi}{(x, y)\in g})\phi ^ {-1} = \set{g \in G}{(x, y)\in g},$$
    which is open in the topology on \(G\). Hence the map $\phi$ is continuous.
    Conversely, a subbasic open set in the topology on \(G\) has the form
    \[\set{g \in G}{(x, y)\in g}\]
    for some \(x\in \N\) and \(y\in X\). By \cref{ultranew}(iii), there exists
    \(x'\in X\) such that $(x')g = y$. \cref{ultranew}(vi) implies that
    \[
      (\set{g \in G}{(x, y)\in g})\phi = \set{g \in (G)\phi}{(x', y)\in g}
    \]
    and hence the map $\phi$ is a topological embedding.
\end{proof}

\begin{definition}\label{def1}
    A topological inverse semigroup $X$ is called {\em weakly ditopological} if
    for any point $x\in X$ and a neighborhood $O$ of $x$ there are
    neighborhoods $U$, $V$ and $W$ of the points $x$, $x^{-1}x$ and $xx^{-1}$,
    respectively, such that
    $$\set{s\in S}{\exists{b}\in U,\ \exists e\in W\cap E(S) \text{ such that
    }b=es}\cap \set{s\in S}{ss^{-1}\in W}\cap\set{s\in S}{s^{-1}s\in V}\subseteq
    O.$$
\end{definition}

Clearly, each ditopological inverse semigroup is weakly ditopological. However,
the converse is not true (see \cite[Example 3.4]{P}).
Since $xx^{-1}=x^{-1}x$ in Clifford semigroups, we get the following.

\begin{proposition}\label{P}
    A Clifford semigroup $S$ is ditopological if and only if $S$ is weakly
    ditopological.
\end{proposition}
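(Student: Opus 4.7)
The forward direction is immediate from the shapes of the two definitions. If $S$ is ditopological, then given $x \in S$ and a neighborhood $O$ of $x$, Definition~\ref{def} produces neighborhoods $U$ of $x$ and $W$ of $xx^{-1}$ with the stated containment. Inversion is continuous (built into both notions), and taking $V = S$ as the neighborhood of $x^{-1}x$ in Definition~\ref{def1} only shrinks the set on the left-hand side by intersecting with one more set, so the weakly ditopological condition holds with the same $U$, $W$, and this $V$.

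For the reverse direction, suppose $S$ is a weakly ditopological Clifford semigroup, fix $x \in S$, and let $O$ be a neighborhood of $x$. By Definition~\ref{def1} there are neighborhoods $U$ of $x$, $V$ of $x^{-1}x$, and $W$ of $xx^{-1}$ witnessing the weakly ditopological condition. The key observation is that $S$ being Clifford means $xx^{-1} = x^{-1}x$, so $W' := W \cap V$ is a neighborhood of the common value $xx^{-1} = x^{-1}x$. I will use $U$ and $W'$ as the witnesses required by Definition~\ref{def}.

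The verification then reduces to a set-containment check. Since $W' \cap E(S) \subseteq W \cap E(S)$, the set $\{s \in S : \exists\, b \in U,\ \exists\, e \in W'\cap E(S),\ b = es\}$ sits inside the corresponding set defined with $W$. Moreover, for any $s \in S$, the Clifford identity $ss^{-1} = s^{-1}s$ gives
\[
\{s \in S : ss^{-1}\in W'\} = \{s\in S : ss^{-1}\in W\} \cap \{s\in S : ss^{-1}\in V\} = \{s\in S : ss^{-1}\in W\} \cap \{s\in S : s^{-1}s\in V\}.
\]
Intersecting these two containments shows that the set appearing in the ditopological condition (with $U$ and $W'$) is contained in the triple intersection from the weakly ditopological condition (with $U$, $V$, $W$), which lies in $O$. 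This establishes the ditopological property.

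There is no real obstacle here; the whole argument is a direct consequence of the Clifford identity $xx^{-1} = x^{-1}x$ collapsing the role of $V$ and $W$ into a single neighborhood. The only minor care needed is to make sure continuity of inversion, which is part of both definitions, is not lost when translating between them, but this is automatic.
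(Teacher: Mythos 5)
Your proof is correct and follows exactly the route the paper intends: the paper's entire justification is the remark that $xx^{-1}=x^{-1}x$ in a Clifford semigroup, and your argument (taking $V=S$ in one direction and $W'=W\cap V$ together with the identity $ss^{-1}=s^{-1}s$ for all $s$ in the other) is just that observation written out in full.
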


\begin{proposition}\label{wdit}
    The topological inverse semigroup $I_\N$ is weakly ditopological.
\end{proposition}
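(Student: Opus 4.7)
The plan is to work directly with the canonical subbasis of $I_\N$. I would fix $h \in I_\N$ and an open neighbourhood $O$ of $h$, and reduce to the case
\[
O = \bigcap_{(x,y) \in F} U_{x,y} \;\cap\; \bigcap_{c \in D} W_c \;\cap\; \bigcap_{d \in R} W_d^{-1},
\]
where $F \subseteq h$, $D \subseteq \N \setminus \dom(h)$, and $R \subseteq \N \setminus \im(h)$ are finite. A short computation in the left-to-right composition convention of the paper shows that for every $s \in I_\N$ one has $ss^{-1} = \id_{\dom(s)}$ and $s^{-1}s = \id_{\im(s)}$; in particular $hh^{-1} = \id_{\dom(h)}$ and $h^{-1}h = \id_{\im(h)}$, and every idempotent of $I_\N$ has the form $\id_A$ for some $A \subseteq \N$.

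With this in mind I would set $U = O$ and
\[
W = \bigcap_{x \in \dom(F)} U_{x,x} \cap \bigcap_{c \in D}\bigl(W_c \cap W_c^{-1}\bigr), \qquad V = \bigcap_{y \in \im(F)} U_{y,y} \cap \bigcap_{d \in R}\bigl(W_d \cap W_d^{-1}\bigr),
\]
which are open neighbourhoods of $hh^{-1}$ and $h^{-1}h$ respectively, since $\dom(F) \subseteq \dom(h)$, $\im(F) \subseteq \im(h)$, and $D$, $R$ are disjoint from $\dom(h)$, $\im(h)$. To verify \cref{def1}, let $s$ lie in the prescribed triple intersection: there exist $b \in U$ and an idempotent $e \in W \cap E(I_\N)$ with $b = es$, together with $ss^{-1} \in W$ and $s^{-1}s \in V$. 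Unpacking, $ss^{-1} \in W$ forces $\dom(F) \subseteq \dom(s)$ and $D \cap \dom(s) = \emptyset$, while $s^{-1}s \in V$ forces $\im(F) \subseteq \im(s)$ and $R \cap \im(s) = \emptyset$. Writing $e = \id_A$, the product $es$ is precisely the restriction $s{\restriction}_{A \cap \dom(s)}$, so $b \subseteq s$ as partial functions; combining this with $F \subseteq b$ (from $b \in U = O$) yields $F \subseteq s$, and hence $s \in O$.

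The only mildly subtle point is recognising that the multiplicative witness condition $b = es$ is exactly what transfers the positive information $F \subseteq b$ across to $s$ via the restriction identity for idempotents in $I_\N$; all the negative (domain/image disjointness) information is delivered by the idempotent constraints $ss^{-1} \in W$ and $s^{-1}s \in V$. Everything else reduces to a routine translation of the canonical subbasis into $\dom$/$\im$ membership conditions.
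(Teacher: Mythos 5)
Your proof is correct and follows essentially the same route as the paper's: take $U=O$, let $W$ be a neighbourhood of $hh^{-1}=\id_{\dom(h)}$ fixing $\dom(F)$ pointwise and excluding $D$ from domains, and let $V$ be a neighbourhood of $h^{-1}h=\id_{\im(h)}$ fixing $\im(F)$ pointwise and excluding $R$ from images, then observe that $b=es\subseteq s$ transfers $F\subseteq b$ to $F\subseteq s$ while the idempotent constraints deliver the disjointness conditions. The only (harmless) cosmetic differences from the paper's argument are the redundant extra factors $W_c^{-1}$ in $W$ and $W_d$ in $V$.
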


\begin{proof}
    Fix a partial injection $f\in I_\N$ and a basic open neighborhood $O$ of
    $f$. Then there exist finite subsets $A_1,A_2,A_3$ of $\N$ such that
    $A_1\subseteq \dom(f)$, $A_2\cap \dom(f)=\emptyset$, $A_3\cap
    \im(f)=\emptyset$ and
    $$O=\set{g\in I_\N}{g{\restriction}_{A_1}=f{\restriction}_{A_1},\ 
    \dom(g)\cap A_2=\emptyset \text{ and } \im(g)\cap A_3=\emptyset}.$$

    By $\id_A$ we denote the identity function on a subset
    $A\subseteq \N$. Consider the open neighborhoods $$W=\set{g\in I_\N}
        {g{\restriction}_{A_1}=\id_{A_1}\text{ and } \dom(g)\cap
        A_2=\emptyset}$$ and $$V=\set{g\in I_\N}{
        g{\restriction}_{(A_1)f}=\id_{(A_1)f}\text{ and } \im(g)\cap
        A_3=\emptyset}$$
        of $ff^{-1}$ and $f^{-1}f$, respectively. Put $U=O$ and
    \begin{multline*}
    D=\set{s\in I_{\N}}{\exists{b}\in U,\ \exists e\in W\cap E(S) \text{ such that
        }b=es} \\
        \cap \set{s\in I_\N}{ss^{-1}\in W} \cap\set{s\in I_\N}{s^{-1}s\in V}.
    \end{multline*}

    In order to show that $D\subseteq O$
    fix any $y\in D$. There exist $b\in U$ and $e\in W$ such that $b=ey$.
    Consequently, $b{\restriction}_{\dom(e)}=y{\restriction}_{\dom(e)}$. Since
    $A_1\subseteq \dom(e)$ and $f{\restriction}_{A_1}=b{\restriction}_{A_1}$ we
    deduce that $y{\restriction}_{A_1}=f{\restriction}_{A_1}$. By the definition of
    $W$, for each element $z\in\set{s\in I_\N}{ss^{-1}\in W}$ we have that
    $\dom(z)\cap A_2=\emptyset$. Consequently, $\dom(y)\cap A_2=\emptyset$.
    Analogously, for each $z\in\set{s\in I_\N}{s^{-1}s\in V}$ we have that
    $\im(z)\cap A_3=\emptyset$. It follows that $\im(y)\cap A_3=\emptyset$. Hence
    $y\in O$, and so the semigroup $I_\N$ is weakly ditopological.
\end{proof}

By $2^\N$ we denote the Cantor set endowed with the semilattice operation of
taking coordinate-wise minimum.

\begin{lemma}\label{semil}
    The semilattice of idempotents of $I_\N$ is topologically isomorphic to $2^\N$.
\end{lemma}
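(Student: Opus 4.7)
The plan is to identify the idempotents of $I_{\N}$ explicitly, construct the obvious bijection with $2^{\N}$, and verify that it preserves both the semilattice operation and the subspace topology.

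First I would observe that an element $f\in I_{\N}$ is idempotent if and only if $f=\id_A$ for some $A\subseteq \N$: the inclusion of partial identities in $E(I_{\N})$ is immediate, and conversely if $f=f^2$ then for every $x\in\dom(f)$ we have $(x)f\in\dom(f)$ and $(x)f = ((x)f)f$, so injectivity of $f$ forces $(x)f=x$. Define $\phi: E(I_{\N})\to 2^{\N}$ by $(\id_A)\phi = \chi_A$, the characteristic function of $A$. This is a bijection, and since $\id_A\cdot\id_B = \id_{A\cap B}$ while $\chi_A\wedge\chi_B = \chi_{A\cap B}$ is the coordinatewise minimum, $\phi$ is a semilattice isomorphism.

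Next I would check that $\phi$ is a homeomorphism by intersecting the canonical subbasis of $I_{\N}$ with $E(I_{\N})$. For $x,y\in\N$ we have
\[
U_{x,y}\cap E(I_{\N}) = \set{\id_A}{(x,y)\in \id_A},
\]
which is empty unless $x=y$, in which case it equals $\set{\id_A}{x\in A}$. Similarly $W_x\cap E(I_{\N}) = W_x^{-1}\cap E(I_{\N}) = \set{\id_A}{x\notin A}$. Under $\phi$ these are precisely the sets $\set{\chi \in 2^{\N}}{\chi(x)=1}$ and $\set{\chi\in 2^{\N}}{\chi(x)=0}$, which form the canonical subbasis for the product topology on $2^{\N}$. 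Hence $\phi$ carries a subbasis to a subbasis and is a topological isomorphism.

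There is no real obstacle here; the only point requiring mild care is ruling out non-identity idempotents in $I_{\N}$ and keeping track of which subbasic sets of $I_{\N}$ become nontrivial after restriction to $E(I_{\N})$. Everything else is an immediate verification.
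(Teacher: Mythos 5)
Your proof is correct and follows exactly the approach the paper takes (which simply asserts the verification is routine): identify the idempotents as partial identities, map each to the characteristic function of its domain, and check that this preserves the operation and carries the restricted subbasis of $I_\N$ to the standard subbasis of $2^\N$. Nothing to add.
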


\begin{proof}
    Note that any idempotent $e$ of $I_\N$ is the identity map on the set
    $\dom(e)$. A routine verification shows that the map $\phi:
    E(I_\N)\rightarrow 2^{\N}$ which assigns to each element $e\in E(I_\N)$ the
    characteristic function of $\dom(e)$ is a topological isomorphism.
\end{proof}

For an element $x$ of a semilattice $X$ let
$${\Uparrow} x=\set{z\in X}{\text{there exists an open neighborhood } V \text{ of }
z \text{ such that }V\subseteq{\uparrow} x}.$$
A subset $A$ of a topological semilattice $X$ is called {\em U-dense} if for
each point $x\in X$ and a neighborhood $U$ of $x$ in $X$ there exists a point
$y\in U\cap A$ such that $x\in {\Uparrow}y$. A semilattice $X$ is called {\em
U-separable} if it possesses a countable $U$-dense subset.

The following result was proved in~\cite[Proposition 2.5]{BP}

\begin{proposition}\label{sep}
    Each second-countable $U$-semilattice is $U$-separable.
\end{proposition}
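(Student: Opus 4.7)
The plan is to leverage second-countability (hence the Lindelöf property) to extract countably many ``witnesses'' from the $U$-semilattice property. Fix a countable basis $\{U_n\}_{n \in \N}$ for the topology on $X$. The idea is that for each basic open $U_n$, the $U$-semilattice property produces, for every $x \in U_n$, some $y \in U_n$ and an open neighbourhood $V$ of $x$ with $V \subseteq {\uparrow} y$. Observe that the inclusion $V \subseteq {\uparrow} y$ combined with $x \in V$ is exactly the statement that $x \in {\Uparrow} y$. So for each basic open $U_n$ we obtain a family $\{V_x^n : x \in U_n\}$ of open neighbourhoods covering $U_n$, together with assigned witnesses $y_x^n \in U_n$ satisfying $V_x^n \subseteq {\uparrow} y_x^n$.

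Next I would use the Lindelöf property: since $X$ is second-countable, it is hereditarily Lindelöf, so the open cover $\{V_x^n : x \in U_n\}$ of the subspace $U_n$ admits a countable subcover $\{V_{x_k}^n : k \in \N\}$. Define
\[
  A_n = \set{y_{x_k}^n}{k \in \N} \subseteq U_n, \qquad A = \bigcup_{n \in \N} A_n.
\]
Then $A$ is a countable subset of $X$.

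It remains to check that $A$ is $U$-dense. Given any $x \in X$ and any open neighbourhood $U$ of $x$, pick a basic open $U_n$ with $x \in U_n \subseteq U$. Since $\{V_{x_k}^n : k \in \N\}$ covers $U_n$, there exists some $k \in \N$ with $x \in V_{x_k}^n$. Then $y_{x_k}^n \in U_n \cap A \subseteq U \cap A$, and the open set $V_{x_k}^n$ is a neighbourhood of $x$ contained in ${\uparrow} y_{x_k}^n$, so by definition $x \in {\Uparrow} y_{x_k}^n$. This is precisely the required $U$-density condition.

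The argument is almost entirely routine once the right reading of ${\Uparrow} y$ is extracted: the neighbourhood $V$ produced by the $U$-semilattice axiom serves double duty as both the set certifying $x \in {\Uparrow} y$ and the set to which the Lindelöf property is applied. Consequently I do not anticipate a serious obstacle; the only thing to verify carefully is that the chosen $y_x^n$ lies inside $U_n$ (which is guaranteed by the $U$-semilattice definition) so that the resulting countable set $A$ can be found inside any prescribed neighbourhood.
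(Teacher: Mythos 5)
Your proof is correct. The paper does not actually prove this proposition---it only cites \cite[Proposition 2.5]{BP}---and your argument (use second-countability, hence the hereditarily Lindel\"of property, to extract for each basic open set a countable subfamily of the neighbourhoods $V_x^n$ produced by the $U$-semilattice axiom, and collect the corresponding witnesses $y_x^n$) is exactly the standard argument one would expect there; the key observation that $V\subseteq{\uparrow}y$ with $x\in V$ certifies $x\in{\Uparrow}y$ is handled correctly.
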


Recall that if $X$ is a topological semigroup, then $X^{0}$ carries the unique
topology $\Tau$ such that the point $0$ is isolated in $(X^0,\Tau)$ and the
subspace topology on $X$ inherited from $(X^0,\Tau)$ coincides with the
original topology on $X$. Let $X$ be an inverse semigroup and $e\in E(X)$. Then
the maximal subgroup $\set{x\in X}{xx^{-1}=e=x^{-1}x}$ is denoted by $H_e$.
The following nontrivial result proved by Banakh and Pastukhova in~\cite[Theorem 3.2]{BP}
is crucial for this paper.

\begin{theorem}\label{thBP}
    Let $S$ be a Hausdorff ditopological Clifford semigroup whose set of
    idempotents $E(S)$ is a $U_2$-semilattice, and $A$ be any U-dense subset of
    $S$. Then $S$ can be topologically embedded into the Tychonoff product
    $$E(S){\times}\prod_{e\in A} (H_e^0)^{A\cap {\Uparrow}e},$$
    where $H_e$ is endowed with the subspace topology inherited from $S$.
\end{theorem}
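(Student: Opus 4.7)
The strategy is to construct $\Phi \colon S \to E(S) \times \prod_{e \in A}(H_e^0)^{A \cap {\Uparrow}e}$ as a diagonal of explicit coordinate maps. The first coordinate should be $s \mapsto ss^{-1}$, which is continuous because inversion is continuous (by the ditopological hypothesis) and multiplication is continuous. For each pair $(e, a)$ with $e \in A$ and $a \in A \cap {\Uparrow}e$ I would define a coordinate map $\pi_{e,a} \colon S \to H_e^0$ along the lines of
\[
    (s)\pi_{e,a} = \begin{cases} se, & \text{if } a \leq ss^{-1}, \\ 0, & \text{otherwise.} \end{cases}
\]
Since $a \in {\Uparrow}e \subseteq {\uparrow}e$, the inequality $a \leq ss^{-1}$ forces $e \leq ss^{-1}$, so $se \in H_e$ and the map is well defined; the role of the parameter $a$ is to control \emph{when} the dichotomy switches on, coupling the construction to the $U_2$-structure of $E(S)$.

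The homomorphism property decomposes coordinate by coordinate: the first is a homomorphism since $S$ is Clifford, hence $(st)(st)^{-1} = ss^{-1}\,tt^{-1}$ (the meet of $ss^{-1}$ and $tt^{-1}$ in $E(S)$); each $\pi_{e,a}$ is a homomorphism because $a \leq (st)(st)^{-1}$ is equivalent to $a$ being below both $ss^{-1}$ and $tt^{-1}$, and in that case $(st)e = (se)(te)$, using that idempotents are central in a Clifford semigroup. For injectivity, suppose $(s)\Phi = (t)\Phi$ with $s \neq t$: then $s$ and $t$ share an idempotent $e^* = ss^{-1} = tt^{-1}$ and both lie in $H_{e^*}$, while the set $\{e \in E(S) : se \neq te\}$ is open in $E(S)$ (as the preimage of the open non-diagonal under the continuous map $e \mapsto (se, te)$) and contains $e^*$. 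By $U$-density I would find $e \in A$ in this set with $e^* \in {\Uparrow}e$, and then a second application of $U$-density, inside a neighborhood of $e^*$ contained in ${\uparrow}e$, produces $a \in A \cap {\Uparrow}e$ with $a \leq e^*$; the $(e, a)$-coordinates of $\Phi(s)$ and $\Phi(t)$ are then $se$ and $te$ respectively, which differ, a contradiction.

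Continuity of each $\pi_{e,a}$ is where the $U_2$-hypothesis enters: the clopen ideals in $E(S)$ supplied by $U_2$ convert ``$a \leq ss^{-1}$'' into a locally clopen condition on $s$, so the preimage of $0$ is open while the preimage of any nonzero value reduces to continuity of right multiplication by $e$. The main obstacle --- and the technical heart of the argument --- is proving that $\Phi$ is open onto its image. Given $s \in S$ and a neighborhood $O$ of $s$, the ditopological definition (\cref{def}) furnishes neighborhoods $U$ of $s$ and $W$ of $ss^{-1}$ with the recoverability property that any $s' \in S$ whose idempotent lies in $W$ and which factors as $es''$ with $e \in W \cap E(S)$ and $es'' \in U$ must itself lie in $O$. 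I would then use $U$-density of $A$ together with the clopen-ideal structure of $E(S)$ to extract finitely many pairs $(e_i, a_i)$ that jointly encode the condition $s's'^{-1} \in W$ on the first $\Phi$-coordinate and a factorisation witness on the $\pi_{e_i, a_i}$-coordinates, producing a basic open cylinder in the product whose $\Phi$-preimage is contained in $O$. Matching the data provided by the ditopological condition with the combinatorics available from a $U$-dense subset of a $U_2$-semilattice is precisely what makes the argument substantive.
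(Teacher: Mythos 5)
First, a point of reference: the paper does not prove this statement at all --- it is quoted verbatim from Banakh and Pastukhova \cite[Theorem 3.2]{BP} and used as a black box, so there is no internal proof to compare your attempt against. Judged on its own terms, your proposal has the right architecture (a diagonal map whose first coordinate is $s\mapsto ss^{-1}$ and whose remaining coordinates are ``localized'' copies of $s$ living in $H_e^0$), but two of the three hard steps are not actually carried out, and one of them fails as written.

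The concrete problem is the continuity of $\pi_{e,a}$. With your switching condition ``$a\leq ss^{-1}$'', the set $\set{s\in S}{a\leq ss^{-1}}$ is the equalizer of $s\mapsto a\cdot ss^{-1}$ and $s\mapsto a$, hence closed but in general not open; at a boundary point the value of $\pi_{e,a}$ jumps from $se$ to $0$, so $\pi_{e,a}$ is discontinuous there. Saying that $U_2$ ``converts this into a locally clopen condition'' does not repair this: the $U_2$ property supplies clopen ideals $I$ with $E(S)\setminus I\subseteq{\uparrow}y$, but it does not make ${\uparrow}a$ itself clopen. The natural fix is to build the clopen ideal into the definition (switch on when $ss^{-1}\notin I$ for a suitable clopen ideal $I$ attached to the pair), but then the homomorphism property is no longer automatic --- the complement of an ideal need not be closed under meets, so one must additionally arrange $e\notin I\subseteq E(S)\setminus{\uparrow}e$ and argue via the ideal property that $ss^{-1},tt^{-1}\notin I$ forces $ss^{-1}tt^{-1}\notin I$. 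None of this is in your write-up, and it is exactly the point where the $U_2$ hypothesis (as opposed to the weaker $U$ hypothesis, which suffices only for the $I_\N$ version) earns its keep. Finally, the openness of $\Phi$ onto its image --- which you correctly identify as the technical heart and the only place the ditopological condition is used --- is described only as a plan (``extract finitely many pairs \dots producing a basic open cylinder''); no finite set of coordinates is exhibited and no containment in $O$ is verified. As it stands the proposal is a plausible outline of the Banakh--Pastukhova construction, not a proof of it.
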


\subsection{Proof of \cref{theorem-we-need-better-labels}}

We need to show that a topological Clifford semigroup $S$ whose set of
idempotents $E(S)$ is a $U$-semilattice embeds topologically into $I_{\N}$ if
and only if $S$ is Hausdorff, ditopological and every maximal subgroup of $S$,
as well as the semilattice $E(S)$, embed topologically into $I_\N$.

\begin{proof}
($\Rightarrow$) If a Clifford topological semigroup $S$  embeds topologically into $I_\N$,
    then $S$ is Hausdorff. Also the maximal subgroups of $S$ and the
    semilattice $E(S)$ embed topologically into $I_\N$.   According to~\cite{P}
    each inverse subsemigroup of a weakly ditopological inverse semigroup is
    weakly ditopological. Then \cref{wdit} implies that the Clifford
    semigroup $S$ is weakly-ditopological. \cref{P} yields that $S$
    is ditopological.

($\Leftarrow$)
    Let $S$ be a Hausdorff ditopological Clifford semigroup whose set of
    idempotents $E(S)$ satisfies the following properties:
    \begin{enumerate}
        \item $E(S)$ is a $U$-semilattice which embeds topologically into $I_\N$;
        \item 
     for every $e\in E(S)$ the maximal subgroup $H_e=\set{x\in S}
    {xx^{-1}=e=x^{-1}x}$ embeds topologically into $I_\N$. 
    \end{enumerate}
    By~\cite[Proposition
        2.4(6)]{BP}, each $U$-semilattice which embeds topologically into $2^\N$ is a
    $U_2$-semilattice. \cref{semil} implies that $E(S)$ is a $U_2$-semilattice.
    Hence $S$ satisfies conditions of \cref{thBP}. Therefore, for any
    U-dense subset $A\subseteq S$, $S$ can be topologically embedded into the
    Tychonoff product $$E(S){\times}\prod_{e\in A} (H_e^0)^{A\cap {\Uparrow}e}.$$
    Since the space $I_\N$ is Polish, the semilattice $E(S)$ is second-countable.
    \cref{sep} implies that we lose no generality assuming that the set $A$ is
    countable.

    By \cref{usefulnew1}, for each idempotent $e\in S$ the topological monoid
    $H_e^0$ embeds topologically into $I_\N$. Since the set $A$ is countable,
    ~\cref{prod}(ii) implies that for every $e\in E(S)$ the topological semigroup $(H_e^0)^{A\cap {\Uparrow}e}$
    embeds topologically into $I_\N$. Using one more time~\cref{prod}(ii) we
    get that $\prod_{e\in A}(H_e^0)^{A\cap {\Uparrow}e}$ embeds topologically
    into $I_\N$.

    By the assumption, $E(S)$ is topologically isomorphic to a subsemigroup of
    $E(I_\N)$. \cref{prod}(ii) ensures that $E(S){\times}\prod_{e\in A}
    (H_e^0)^{A\cap {\Uparrow}e}$ embeds topologically into $I_\N$. Hence $S$ is
    topologically isomorphic to a subsemigroup of $I_\N$.
\end{proof}

\begin{proposition}\label{nontrivial}
    Let $S$ be a Clifford subsemigroup of $\N^\N$.  Then $S$ endowed with the
    subspace topology is a topological inverse semigroup.
\end{proposition}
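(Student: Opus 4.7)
The plan is as follows. Since $S$ inherits its topology from $\N^\N$, which is a topological semigroup, multiplication on $S$ is automatically jointly continuous. What requires proof is that inversion $x\mapsto x^{-1}$ is continuous on $S$. As subbasic open sets in $\N^\N$ have the form $\set{f\in\N^\N}{(n)f=m}$, this reduces to showing that for every $a\in S$ and every $n\in\N$, the function $x\mapsto(n)x^{-1}$ from $S$ to the discrete space $\N$ is continuous at~$a$.

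Fix such $a$ and $n$, and set $e = aa^{-1}\in E(S)$, $m = (n)a^{-1}$, $n' = (n)e$, and $k = (m)a^{-1}$. Using $ea=a$ one checks that $(n')a = ((n)e)a = (n)a$ and $(m)a = (n)a^{-1}a = (n)e = n'$; since $m\in\im(a^{-1}) = \im(e)$ by \cref{ultranew}(ii), one also has $(k)a = (m)e = m$. The candidate neighbourhood of $a$ is
\[
U \;=\; \set{f\in\N^\N}{(j)f = (j)a\ \text{for every}\ j\in\{n,n',m,k\}},
\]
and the claim is that $(n)b^{-1} = m$ for every $b\in U\cap S$. To verify this, fix such $b$ and write $f = bb^{-1}$, so $b, b^{-1}\in H_f$. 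By \cref{ultranew}(ii)--(iv), $\im(b)=\im(f)$ and $b^{-1}\!\restriction_{\im(f)}$ is the inverse permutation of $b\!\restriction_{\im(f)}$ in $S_{\im(f)}$. The equalities $(m)b = (m)a = n'$ and $(k)b = (k)a = m$ force $n',m\in\im(b)=\im(f)$, so $(n')f = n'$. Since $(n)b=(n)a=(n')a=(n')b$, multiplying on the right by $b^{-1}$ yields $(n)f = (n')f = n'$. Using $b^{-1} = fb^{-1}$ (as $f$ is the identity of the subgroup $H_f$ containing $b^{-1}$),
\[
(n)b^{-1} \;=\; ((n)f)b^{-1} \;=\; (n')b^{-1} \;=\; m,
\]
the last equality holding because $b^{-1}\!\restriction_{\im(f)}$ is the inverse permutation of $b\!\restriction_{\im(f)}$ and $(m)b = n'$ with $m, n'\in\im(f)$.

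The main obstacle is that a point $b\in S$ near $a\in H_e$ can lie in a subgroup $H_f$ with $f\neq e$ and with $\im(f)\neq\im(e)$, so neither the idempotent $f$ nor the set $\im(b)$ is directly controlled by closeness in $\N^\N$. The four test coordinates $n, n', m, k$ are selected precisely to force the minimal amount of agreement between $f$ and $e$ that is needed, namely $m,n'\in\im(f)$ together with $(n)f = n'$, so that $(n)b^{-1}$ can be recovered without any further knowledge of $f$ itself. Once this is set up, the verification is a routine bookkeeping exercise using \cref{ultranew}.
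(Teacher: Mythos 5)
Your proof is correct and follows essentially the same strategy as the paper's: both rest on \cref{ultranew} and control the inverse by forcing agreement on finitely many coordinates, exploiting the fact that elements of a maximal subgroup are permutations of a common image whose inverses are the inverse permutations. Your version is a little leaner — you reduce to the coordinate maps $x\mapsto (n)x^{-1}$ and use the witness point $k$ to force $m\in\im(b)$, where the paper instead imposes an open condition of the form $\im(s)\cap Z\subseteq\im(t)$ on a more elaborately constructed neighbourhood — but the underlying mechanism is identical.
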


\begin{proof}
    Suppose that $S$ is any Clifford subsemigroup of $\N ^ \N$. Throughout this proof it will be convenient to denote  $s{\restriction}_{\im(s)}$ by $\phi_s$ for every $s\in S$. Since every Clifford semigroup is a union of groups, if $s\in S$, then $s$ belongs to a subgroup of $\N ^ \N$. In particular, 
    by \cref{ultranew}, for each $s\in S$, $\phi_s$ is a permutation of $\im(s)$ and $\phi_s^{-1}=\phi_{s^{-1}}$, where by $\phi_s^{-1}$ we mean the inverse permutation of $\phi_s$. Although $\phi_s\not\in \N ^ \N$ unless $s$ is a permutation itself, if $t\in \N ^ \N$ is any transformation such that $\im(t)\subseteq \im(s)$, then the compositions $t\circ \phi_s$ and $t\circ \phi_s ^ {-1}$ (as binary relations) belong to $\N ^ \N$. For the sake of brevity, we will denote them as $t\phi_s$ and $t\phi_s ^ {-1}$, respectively. Taking into account that 
    $(x)\phi_s = (x)s$ for all $x\in \im(s)$, \cref{ultranew} implies the following: $s\phi_s ^ {-1}s = s$ and  $s\phi_s ^{-1} = ss^ {-1} = s ^ {-1}s = s ^ {-1}\phi_s$ for any $s\in S$.

    Clearly, it is enough to show that inversion is continuous in
    $S$. For a finite partial function
    $f$ on $\N$ consider a nonempty basic open set \(U=\set{u\in S}{f\subseteq
    u}\).
    Fix an arbitrary \(s\in U^ {-1}\). In order to show that the set \(U^{-1}\) is
    open, we need to find an open neighbourhood \(V\) of \(s\) such that
    $V\subseteq U ^ {-1}$. Let 
    $$T=(\dom(f))s\phi_s ^ {-1}
        =\set{x\in \im(s)}{(x)s\in (\dom(f))s},$$ and
    $$Z= \dom(f) \cup  T\cup  (T)\phi_s ^ {-1}.$$
    Observe that the set $Z$ is finite. 
    We define
    \[
    W' = \set{t\in S}{\forall x\in Z, (x)s\phi_s ^ {-1} \in \im(t) \text{ and }(x)t=(x)s\phi_s ^ {-1}t}    \]
    and
    \[
      W= \set{t\in S}{\im(s) \cap Z \subseteq \im(t)} 
      \cap W'.
    \]
Taking into account that for each finite subset $A\subseteq \N$ the set
    $\set{g\in\N^\N}{A\subseteq \im(g)}$ is open in $\N^\N$, it is routine to verify  that the set $W'$ is open and contains $s$. It follows that the set
    $W$ is an open neighborhood of $s$. We will show
    that
    \[
        W \subseteq  \set{t\in S}{\im(t)\cap Z=\im(s)\cap Z}.
    \]
    It suffices to check that for every $t\in W$ if $x\in Z\setminus \im(s)$,
    then $x\in Z\setminus \im(t)$. If \(x\in Z\setminus \im(s)\), then $x\in \dom(f)$ and, consequently, \(
    (x)s\phi_s ^ {-1}\in T\subseteq \im(s)\cap Z\). By the definition of $W'$, $(x)s\phi_s ^ {-1}\in \im(t)$ and $(x)t = (x)s\phi_s^{-1}t$. But $x \not\in \im(s)$ and $(x)s\phi_s ^ {-1}\in \im(s)$ and so  $x \neq (x)s\phi_s ^ {-1}$.
    On the other hand, $(x)s\phi_s^{-1}\in \im(t)$ and $t$ is a permutation of $\im(t)$, and so
    $x\not\in \im(t)$. Thus, $W
    \subseteq \set{t\in S}{\im(t)\cap Z=\im(s)\cap Z}$.%

    We define $V = W \cap \set{t\in S}{t{\restriction}_{Z} =
    s{\restriction}_{Z}}$. Clearly, $V$ is an open neighborhood of $s$. It
    remains to check that $V\subseteq U ^ {-1}$.
    Let \(t\in V\) be arbitrary. We need to show that \(f\subseteq t^{-1}\). As
    \[t\in V= W \cap \set{k\in S}{k{\restriction}_{Z} =
    s{\restriction}_{Z}}\subseteq \set{k\in S}{\im(k)\cap Z=\im(s)\cap
    Z}\cap \set{k\in S}{k{\restriction}_{Z} = s{\restriction}_{Z}},\] we
    know that \(t{\restriction}_{Z} = s{\restriction}_{Z}\) and \(\im(t)\cap
    Z=\im(s)\cap Z\).

 Let $x\in \dom(f)$ be arbitrary.  We will show that $(x)t^{-1}=(x)f$. Since $tt^{-1}=t^{-1}t$, \cref{ultranew}(iii) and (iv) imply  the following: 
    \begin{equation}\label{eq-explain-more-2}
    (x)t^{-1}= (x)t^{-1}tt^{-1} = (x)tt ^ {-1}t^{-1} = (x)t \phi_t ^ {-1}\phi_t ^ {-1}. 
    \end{equation}
    Since $s\in U ^ {-1} =\set{u\in S}{f \subseteq u }^{-1}
    = \set{u ^ {-1} \in S}{ f\subseteq u} = \set{u \in S}{f \subseteq u ^{-1}}$ we get that
    \begin{equation}\label{sec}
    (x)f=(x)s^{-1}=(x)s^{-1}ss ^{-1} = (x)ss ^{-1}s ^{-1} = (x)s\phi_s^{-1}\phi_s^{-1}.\end{equation}
 
 By assumption \(x\in Z\) and so  \((x)t=(x)s\). Since
    \((x)s\phi_s^{-1} \in T \subseteq Z\) and
    $s{\restriction}_{Z} = t{\restriction}_{Z}$, we get that
    \begin{equation}\label{eq-explain-more-1}
    (x)s\phi_s ^ {-1}t = (x)s\phi_s^{-1} s = (x)s.
    \end{equation}
    Clearly, $(x)s\phi_s^{-1} \in \im(s)\cap Z = \im(t) \cap Z$,
    and so $(x)s\phi_s^{-1}\in \im(t)$. Hence
    \begin{equation}\label{eq-explain-more-3}
    (x)s\phi_s^{-1}=(x)s\phi_s^{-1}\phi_t\phi_t^{-1}=(x)s\phi_s^{-1}t\phi_t^{-1}=(x)s\phi_t^{-1}
    \end{equation}
    (the last equality holds by \eqref{eq-explain-more-1}). 
    Similarly,
    $(x)s\phi_s^{-1}\phi_s^{-1}\in (T)\phi_s^{-1}\subseteq Z\cap
    \im(s)$  implies $(x)s\phi_s^{-1}\phi_s^{-1}\in \im(t)$. Since $s{\restriction}_Z = t{\restriction}_Z$ we have that
 $(x)s\phi_s^{-1}\phi_s^{-1}t
    =(x)s\phi_s^{-1}\phi_s^{-1}s
    =(x)s\phi_s^{-1}.$
    So   
    \begin{equation}\label{eq-explain-more-4}(x)s\phi_s^{-1}\phi_s^{-1}  
    = (x)s\phi_s^{-1}\phi_s^{-1}t \phi_t ^ {-1}
    = (x)s\phi_s^{-1}\phi_t ^ {-1}.
    \end{equation}
    Hence
    \begin{equation*}
    \begin{array}{rcll}
        (x)t^{-1} & = & (x)t\phi_t^{-1}\phi_t^{-1} &  \text{by }\eqref{eq-explain-more-2}\\ 
        &= &(x)s\phi_t^{-1}\phi_t^{-1} & \hbox{as } x\in\dom(f)\subseteq Z \hbox{ and } s{\restriction}_Z = t{\restriction}_Z\\
        &= & (x)s\phi_s^{-1}\phi_t^{-1} & \text{by }\eqref{eq-explain-more-3}\\
        & =&  (x)s\phi_s^{-1}\phi_s^{-1} & \text{by }\eqref{eq-explain-more-4} \\
        & = & (x)f & \text{by }\eqref{sec}.\\
         \end{array}
         \end{equation*}
Since $x\in \dom(f)$ was arbitrary, we obtain that $f\subseteq t^{-1}$, and so $t\in U ^ {-1}$. It follows that $V\subseteq U^{-1}$. Thus, the set $U^{-1}$ is open, implying the continuity of inversion in $S$.
\end{proof}

By the proof of \cref{embcl}, the inverse topological semigroup $(I_\N,\Tau)$
embeds topologically into $\N^\N$. However the inversion is not continuous in
$(I_\N,\Tau)$. Hence \cref{nontrivial} doesn't hold for an arbitrary inverse
subsemigroup of $\N^\N$.

\begin{proposition}\label{new 2}
    Each topological inverse subsemigroup of $\N^\N$ is ditopological.
\end{proposition}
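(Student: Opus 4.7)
I plan to adapt the argument used to prove \cref{wdit}, exploiting the fact that elements of $\N^\N$ are total functions to obtain the full ditopological condition (without the neighbourhood $V$ of $x^{-1}x$ that appears in the weak version). Let $S$ be a topological inverse subsemigroup of $\N^\N$, fix $f \in S$, and reduce to the case of a basic open neighbourhood $O = \set{g \in S}{g{\restriction}_A = f{\restriction}_A}$ with $A \subset \N$ finite.

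Two elementary facts will drive the computation. First, since $e := ff^{-1}$ satisfies $e = e^2$ in $\N^\N$, the map $e$ fixes every point of $\im(e)$. Second, $ef = f$, which follows from $f = ff^{-1}f$. With these in mind, set $B = A \cup (A)e$ and define
\[U = \set{g \in S}{g{\restriction}_B = f{\restriction}_B}, \qquad W = \set{g \in S}{g{\restriction}_B = e{\restriction}_B};\]
these are open neighbourhoods of $f$ and $ff^{-1}$, respectively.

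The main step is to show that any $s \in S$ with $ss^{-1} \in W$ and $b = e's$ for some $b \in U$, $e' \in W \cap E(S)$, lies in $O$. Fix $x \in A$ and let $z = (x)e \in B$. From $ss^{-1} \in W$ we get $(x)ss^{-1} = (x)e = z$, and hence $(x)s = ((x)ss^{-1})s = (z)s$ by $ss^{-1}s = s$. Because $z \in \im(e)$ we have $(z)e = z$, and since $e' \in W$ agrees with $e$ on $B$, also $(z)e' = z$. Applying $b = e's$ together with $b \in U$ yields
\[(z)s = ((z)e')s = (z)b = (z)f = ((x)e)f = (x)(ef) = (x)f,\]
so $(x)s = (x)f$. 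Taking $x$ arbitrary in $A$ gives $s \in O$, verifying the ditopological condition.

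The main subtlety is bookkeeping the left-to-right composition convention and enlarging $A$ to $B = A \cup (A)e$ so that $W$ simultaneously controls $ss^{-1}$ on $A$ and pins $e'$ at the shifted points $(A)e$ where $b$ needs to be known. Notice that the argument never uses idempotency of $e'$ beyond its membership in $W$, reflecting that it is really the totality of maps in $\N^\N$ (as opposed to the partiality in $I_\N$) that makes the stronger conclusion available.
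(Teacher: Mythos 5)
Your proof is correct, but it takes a genuinely different route from the paper's. The paper first invokes \cref{cool} to obtain a countable family of right congruences $\rho_n$ whose classes form a basis of the subspace topology on $S$, then sets $U=[x]_{\rho_n}$ and $W=[xx^{-1}]_{\rho_n}$ and finishes with the purely algebraic computation $[x]_{\rho_n}=[b]_{\rho_n}=[ey]_{\rho_n}=[yy^{-1}y]_{\rho_n}=[y]_{\rho_n}$; in effect it proves the more general statement that any Hausdorff topological inverse semigroup whose topology has a basis of right-congruence classes is ditopological, and only then specialises to $\N^\N$. You instead work directly with the concrete basic open sets of $\N^\N$ determined by restriction to finite sets, enlarging $A$ to $B=A\cup(A)e$ so that $W$ simultaneously forces $(x)ss^{-1}=(x)e$ for $x\in A$ and pins $e'$ at the points of $(A)e$, and then chase the identities $s=ss^{-1}s$, $b=e's$, $ef=f$ pointwise. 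Each step checks out (including the observation that idempotency of $e'$ is never used beyond membership in $W$, and the standard reduction to basic neighbourhoods). What the paper's argument buys is brevity and generality, since the right-congruence formulation does all the work; what yours buys is self-containedness --- it bypasses \cref{cool} entirely --- and an explicit accounting of exactly which finite amount of pointwise information suffices. Your closing remark that totality of the maps is what drives the stronger (non-weak) conclusion is a fair heuristic, though note the paper's congruence-based proof obtains the same strong conclusion without ever invoking totality.
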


\begin{proof}
    Let $S$ be a topological inverse subsemigroup of $\N^\N$. By $\Tau$ we
    denote the subspace topology on $S$ inherited from $\N^\N$. By~\cref{cool},
    there exists a family $\set{\rho_n}{n\in\N}$ of right congruences on $S$, each
    having countably many equivalence classes, such that the family
    $\set{[x]_{\rho_n}}{x\in S,\ n\in\N}$ is a basis of $\Tau$.

    To show that $S$ is ditopological fix any $x\in S$ and an open neighborhood $O$
    of $x$. Then there exists $n\in\N$ such that $[x]_{\rho_{n}}\subseteq O$. Let
    $U=[x]_{\rho_n}$ and $W=[xx^{-1}]_{\rho_n}$. It remains to check that
    $$D=\set{y\in S}{\exists{b}\in U,\ \exists e\in W\cap E(S) \text{ such that
        }b=ey}\cap \set{y\in S}{yy^{-1}\in W}\subseteq O.$$ Fix any $y\in D$. Then there
    exist $b\in U$ and $e\in W\cap E(S)$ such that $b=ey$. The choice of the sets
    $U$ and $W$ implies that $(b,x)\in\rho_n$ and $(e,xx^{-1})\in\rho_n$. Since
    $yy^{-1}\in W$ we get that $(xx^{-1},yy^{-1})\in\rho_n$ and, consequently,
    $(e,yy^{-1})\in\rho_n$. Then
    $$[x]_{\rho_n}=[b]_{\rho_n}=[ey]_{\rho_n}=[yy^{-1}y]_{\rho_n}=[y]_{\rho_n}.$$
    Hence $y\in U$, and so $S$ is a ditopological inverse semigroup.
\end{proof}

\subsection{ Proof of \cref{ditop}}

We need to show that each Clifford subsemigroup of $\N^\N$ is ditopological.

\begin{proof}
 By \cref{nontrivial}, each Clifford subsemigroup of $\N^\N$ is a topological inverse semigroup. \cref{new 2} implies that Clifford subsemigroups of $\N^\N$ are ditopological.   
\end{proof}

\subsection{Proof of \cref{newtheorem1}}

We need to show that a Clifford topological semigroup $S$ whose set of
idempotents $E(S)$ is a $U_2$-semilattice embeds topologically into $\N^{\N}$
if and only if $S$ is Hausdorff, ditopological and every maximal subgroup of
$S$, as well as the semilattice $E(S)$, embed topologically into $\N^\N$.

\begin{proof}
($\Rightarrow$)    Let $S$ be a Clifford topological subsemigroup of $\N^\N$.  Then $S$ is
    Hausdorff and every maximal subgroup of $S$, as well as the semilattice
    $E(S)$,  embeds topologically into $\N^\N$. \cref{ditop}
 implies that $S$ is ditopological.

($\Leftarrow$) Let $S$ be a Hausdorff ditopological Clifford semigroup whose set of
    idempotents $E(S)$ is a $U_2$-semilattice which embeds topologically into
    $\N^\N$ and for every $e\in E(S)$ the maximal subgroup $H_e=\set{x\in S}
    {xx^{-1}=e=x^{-1}x}$ embeds topologically into $\N^\N$. Then $S$ satisfies
    conditions of \cref{thBP}. Therefore, for any U-dense subset $A\subseteq S$,
    $S$ can be topologically embedded into the Tychonoff product
    $$E(S){\times}\prod_{e\in A} (H_e^0)^{A\cap {\Uparrow}e}.$$
    Since the space $\N^\N$ is Polish, the semilattice $E(S)$ is second-countable.
    By \cref{sep} we can assume that the set $A$ is countable.

    By \cref{usefulnew}, for each idempotent $e\in S$ the topological monoid
    $H_e^0$ embeds topologically into $\N^\N$. Since the set $A$ is countable,
    ~\cref{prod}(i) implies that for every $e\in E(S)$ the topological semigroup $(H_e^0)^{A\cap {\Uparrow}e}$ embeds
    topologically into $\N^\N$. Using one more time~\cref{prod}(i) we get that
    $\prod_{e\in A}(H_e^0)^{A\cap {\Uparrow}e}$ embeds topologically into $\N^\N$.
    By the assumption, $E(S)$ is topologically isomorphic to a subsemigroup of
    $\N^\N$. \cref{prod}(i) ensures that $E(S){\times}\prod_{e\in A} (H_e^0)^{A\cap
        {\Uparrow}e}$ embeds topologically into $\N^\N$. Hence $S$ is topologically
    isomorphic to a subsemigroup of $\N^\N$.
\end{proof}

A space $X$ is called {\em scattered} if every subset $A$ of $X$ contains an
isolated (in the subspace topology) point.
Recall that Cantor-Bendixson derivatives of a scattered space $X$ are defined by
transfinite induction as follows, where $X'$ is the set of all accumulation
points of $X$:
\begin{enumerate}[label=\rm (\roman*)]
    \item $X^{0}=X$;
    \item $X^{\alpha+1}=\big(X^{\alpha}\big)'$;
    \item $X^{\alpha}=\bigcap_{\beta<\alpha}X^{\beta}$, if $\alpha$ is a limit
      ordinal.
\end{enumerate}
The set $X^{\alpha}\setminus X^{\alpha+1}$ is called the $\alpha$-th {\em
Cantor-Bendixson level} of $X$ and is denoted by $X^{(\alpha)}$ (this notation is used in the proof of \cref{supernew}). The {\em
height} of a scattered space $X$ is the smallest ordinal $ht(X)$ such that
$X^{ht(X)}=\emptyset$.

A semilattice $X$ endowed with a topology $\Tau$ is called:
\begin{enumerate}[label=\rm (\roman*)]
    \item {\em semitopological} if for every $a\in X$ the shift $l_a:
      X\rightarrow X$, $(x)l_a=xa$ is continuous;

    \item {\em $U_2$-semilattice at a point $x$} if for every open
      neighborhood $V$ of $x$ there exist a point $y\in V$ and a clopen ideal
      $I\subseteq E$ such that $x\in X\setminus I\subseteq {\uparrow}y$.
\end{enumerate}

\begin{lemma}\label{supernew}
    Each scattered $T_1$ semitopological semilattice is a $U_2$-semilattice.
\end{lemma}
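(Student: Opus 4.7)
The plan is to find, for each open $U\subseteq X$ and each $x\in U$, an element $y\in U$ with $y\le x$ for which the upset ${\uparrow}y$ is clopen; then $I:=X\setminus{\uparrow}y$ is a clopen downward-closed subset, hence a clopen ideal, and the chain $x\in{\uparrow}y=X\setminus I\subseteq{\uparrow}y$ verifies \cref{def semilat}(ii). Writing ${\downarrow}y:=\set{z\in X}{z\le y}$ for the principal downset at $y$, the whole argument hinges on the following claim: \emph{if some open $V\subseteq X$ satisfies $V\cap{\downarrow}y=\{y\}$, then ${\uparrow}y$ is clopen.}

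Closedness in the claim is immediate: $T_1$-ness makes $\{y\}$ closed, and continuity of the shift $l_y$ gives ${\uparrow}y=l_y^{-1}(\{y\})$. For openness I exploit the semilattice inequality $zy\le y$ (valid for every $z\in X$, since $(zy)y=zy$) to write
\begin{equation*}
  l_y^{-1}(V)=\set{z\in X}{zy\in V\cap{\downarrow}y}=\set{z\in X}{zy=y}={\uparrow}y,
\end{equation*}
which exhibits ${\uparrow}y$ as the preimage of an open set under a continuous map, hence open.

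To construct $y$, I consider the subspace $Y:=U\cap{\downarrow}x$ of $X$. It is nonempty, since $x\in Y$, and, being a subspace of the scattered space $X$, is itself scattered. I pick any $y\in Y$ isolated in the subspace topology on $Y$ and choose an open set $V\subseteq U$ of $X$ with $V\cap{\downarrow}x=\{y\}$. The inclusion ${\downarrow}y\subseteq{\downarrow}x$, which follows from $y\le x$, yields $V\cap{\downarrow}y\subseteq V\cap{\downarrow}x=\{y\}$; combined with $y\in V\cap{\downarrow}y$, this gives $V\cap{\downarrow}y=\{y\}$, and the claim above then produces the required clopen ${\uparrow}y$.

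No serious obstacle is anticipated: scatteredness is used only to extract an isolated point of a nonempty subspace, the semilattice identity $zy\le y$ provides the key containment $l_y^{-1}(V)\subseteq{\uparrow}y$, and $T_1$-ness together with continuity of the single shift $l_y$ promotes ${\uparrow}y$ from closed to clopen. The one moment that could plausibly fail---transferring isolation from inside the relatively large set $U\cap{\downarrow}x$ to isolation inside the potentially much smaller set ${\downarrow}y$---is handled automatically by the monotonicity ${\downarrow}y\subseteq{\downarrow}x$, so no hypothesis beyond those in the statement is needed.
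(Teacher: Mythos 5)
Your proof is correct, and it takes a genuinely different and shorter route than the paper's. The paper proves the lemma by transfinite induction on the Cantor--Bendixson levels of $X$: the base case shows ${\uparrow}x$ is clopen for isolated $x$, and the inductive step splits into cases according to whether $xz=x$ for all $z$ in a small neighbourhood of $x$, falling back on the inductive hypothesis at a point $y=xz$ of strictly smaller rank. You avoid the induction entirely by choosing $y$ to be an isolated point of the nonempty subset $U\cap{\downarrow}x$ (which scatteredness supplies directly) and then observing that if $V$ is open with $V\cap{\downarrow}y=\{y\}$, then $l_y^{-1}(V)=\set{z}{zy\in V}=\set{z}{zy=y}={\uparrow}y$ because $zy\le y$ always; together with ${\uparrow}y=l_y^{-1}(\{y\})$ and $T_1$, this makes ${\uparrow}y$ clopen, and $I=X\setminus{\uparrow}y$ is the required clopen ideal with $x\in{\uparrow}y$. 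The mechanism behind your clopenness claim is essentially the same one the paper uses in its base case (and in case 1 of its inductive step), but by applying it at an isolated point of ${\downarrow}x\cap U$ rather than of $U$ itself you get the witness $y\le x$ for free and dispense with the rank induction and the case analysis altogether. All steps check out: $y\in U$, $y\le x$ gives ${\downarrow}y\subseteq{\downarrow}x$ and hence $V\cap{\downarrow}y=\{y\}$, and $X\setminus{\uparrow}y$ is downward closed, hence an ideal (possibly empty when $y$ is the minimum, a degenerate case the paper's own argument also admits). Your argument is a clean simplification of the paper's.
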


\begin{proof} 
It is easy to check that
  for each open subset $U$ of $X$ the upper set ${\uparrow} U=\bigcup_{x\in
  U}{\uparrow} x$ is open. Since each
  singleton is closed in $X$ and the semilattice $X$ is semitopological, the
  set ${\uparrow}x$ is  closed for every $x\in X$. Hence for every $x\in X^{(0)}$ 
  (the $0$-th Cantor-Bendixson level of $X$)
  the set ${\uparrow}x$ is clopen. Then for each $x\in X^{(0)}$ the
  clopen ideal $I=X\setminus {\uparrow} x$ together with the point $x$ implying 
  that $X$ has the $U_2$ property at $x\in X^{(0)}$. Assume that for some
  ordinal $\alpha<ht(X)$, $X$ has the $U_2$ property at every $x\in
  \bigcup_{\xi\in\alpha}X^{(\xi)}$. Fix any $x\in X^{(\alpha)}$ and open
  neighborhood $U$ of $x$. Since the space $X$ is scattered, we lose no
  generality assuming that $U\subseteq \bigcup_{\xi\leq \alpha}X^{(\xi)}$ and
  $U\cap X^{\alpha}=\{x\}$. The continuity of shifts in $X$ yields the
  existence of an open neighborhood $V$ of $x$ such that $xV\subseteq U$. There
  are two cases to consider:
  \begin{enumerate}
    \item $xz=x$ for all $z\in V$;
    \item there exists $z\in V$ such that $xz=y\in U\setminus \{x\}$.
  \end{enumerate}
  In case 1 we have that $V\subseteq {\uparrow}x$. Clearly, the set ${\uparrow}x$ is closed. Let us show that the upper
  cone ${\uparrow}x$ is open. Pick any $a\in{\uparrow}x$. The continuity of
  shifts in $X$ yields an open neighborhood $W$ of $a$ such that $Wx\subseteq
  V\subseteq {\uparrow}x$, establishing that the element $a$ belongs to the
  interior of ${\uparrow}x$. Hence the set ${\uparrow}x$ is clopen.  Thus, the clopen ideal $I=X\setminus
  {\uparrow} x$ together with the point $x$  show that $X$ is a
  $U_2$-semilattice at $x$.

  Assume that case 2 holds. The choice of $U$ implies that
  $y\in\bigcup_{\xi\in\alpha}X^{(\xi)}$. By the inductive assumption, there
  exist $p\in U$ and a clopen ideal $I$ such that $y\in X\setminus I\subseteq
  {\uparrow} p$. Note that $$xp=x(yp)=(xy)p=xxzp=xzp=yp=p,$$
  implying that $x\in{\uparrow}p$. Since $xy=xxz=xz=y$ we get that $x\notin I$, because otherwise $y=xy\in I$, contradicting the choice of $I$.
  Thus, the point $p\in U$ and the clopen ideal $I$ prove that the semilattice
  $X$ has the $U_2$ property at the point $x$.

  Hence $X$ is a $U_2$-semilattice at each point $x\in X$, implying that $X$ is
  a $U_2$-semilattice.
\end{proof}

\subsection{Proof of \cref{emb22}}

We need to show that a countable Polish Clifford semigroup $S$ embeds
topologically into $I_\N$ if and only if $S$ is ditopological and the
semilattice $E(S)$ embeds topologically into $I_\N$.

\begin{proof}
($\Rightarrow$)
    According to~\cite{P} each inverse subsemigroup of a weakly ditopological
    inverse semigroup is weakly ditopological. Then \cref{wdit}
    implies that each Clifford subsemigroup $S$ of $I_\N$ is
    weakly ditopological. \cref{P} yields that $S$ is ditopological.
    Clearly, since the entire semigroup $S$ embeds into $I_\N$, so too does its semilattice of idempotents.

($\Leftarrow$)
    Let $S$ be a ditopological countable Polish Clifford semigroup such that the
    semilattice $E(S)$ embeds topologically into $I_\N$. The continuity of the
    inversion in $S$ implies that maximal subgroups of $S$ are closed and hence
    Polish. Since non-discrete Polish topological groups are of cardinality
    continuum, we deduce that each maximal subgroup of $S$ is discrete and, thus,
    embeds topologically into $I_\N$ by \cref{theorem-groups}. Clearly, every
    countable Polish space is scattered. By \cref{supernew}, the semilattice $E(S)$
    is a $U_2$-semilattice and, consequently, a $U$-semilattice.
    \cref{theorem-we-need-better-labels} implies that $X$ embeds
    topologically into $I_\N$.
\end{proof}

\subsection{Proof of \cref{emb23}}

We need to show that a countable Polish Clifford semigroup $S$ embeds
topologically into $\N^\N$ if and only if $S$ is ditopological and the
semilattice $E(S)$ embeds topologically into $\N^\N$.

\begin{proof}
  ($\Rightarrow$)
  By \cref{ditop} each Clifford subsemigroup $S$
  of $\N^\N$ is ditopological. Clearly the semilattice of idempotents of $S$ embeds in $\N ^ \N$, since $S$ embeds into $\N ^ \N$.
  
($\Leftarrow$)
  Let $S$ be a countable Polish ditopological Clifford semigroup such that the
  semilattice $E(S)$ embeds topologically into $\N^\N$. Similarly as in the proof
  of \cref{emb22} it can be checked that each maximal subgroup of $S$ embeds
  topologically into $\N^\N$ and the semilattice $E(S)$ is a $U_2$-semilattice.
  \cref{newtheorem1} implies that $X$ embeds topologically into $\N^\N$.
\end{proof}

\section{Counterexamples}\label{section-counter-examples}
In this section we collect counterexamples to \cref{question-main} as well as
other examples which show the sharpness of the results proved in the previous
section.

Given \cref{theorem-compact-2} and \cref{embcl}, it might be tempting to think
that $I_{\N}$ embeds topologically in $\N ^ \N$. The following lemma shows that
this is not the case.

\begin{proposition}
\label{Luke}
    The topological inverse semigroup $I_\N$ cannot be topologically embedded into $\N^\N$.
\end{proposition}

\begin{proof}
    Seeking a contradiction, we suppose that there is such an embedding. It follows
    from \cref{cool}, that there is a countable family \(\{\rho_i: i\in \N\}\)
    of right congruences on \(I_{\N}\) such that the equivalence classes of
    these congruences form a basis for the canonical topology on $I_\N$.

    Clearly, the set \(U = \set{f\in I_\N}{0 \notin
        \im(f)}\) is an open neighborhood of $\emptyset$ in $I_\N$.
    Then there is \(k\in \N\) such that \([\emptyset]_{\rho_k}\subseteq U\). Since the
    set \([\emptyset]_{\rho_k}\) is open in $I_\N$, there are finite subsets \(X,
    Y\subseteq \N\) such that $$V= \set{g\in I_\N}{\dom(g)\cap X = \emptyset,
        \im(g)\cap Y = \emptyset}$$ satisfies
    \[\emptyset \in V \subseteq [\emptyset]_{\rho_k}\subseteq U.\]
    Fix an arbitrary \(n\in \N\setminus(X\cup Y)\). We have that \(\{(n, n)\}\in V
    \subseteq [\emptyset]_{\rho_k}\).
    Since $\rho_k$ is a right congruence we get that
    \begin{align*}
        \{(n, 0)\} = \{(n, n)\}\circ \{(n, 0)\}
        \in [\emptyset\circ \{(n, 0)\}]_{\rho_k}
        =  [\emptyset]_{\rho_k}
        \subseteq U = \set{f\in I_\N}{0 \notin \im(f)},
    \end{align*}
    which is a contradiction.
\end{proof}

Each semilattice $X$ carries a natural partial order $\leq$ defined by $e\leq
f$ if $ef=e$ for any $e,f\in X$. 
A semilattice $X$ is called {\em chain-finite} if every
linearly ordered subset in $(X,\leq)$ is finite. The following proposition shows that \cref{embcl}
cannot be reversed.

\begin{proposition}\label{chain-finite}
    Each countable infinite discrete chain-finite semilattice $X$ embeds
    topologically into $\N^\N$, but not into $I_\N$.
\end{proposition}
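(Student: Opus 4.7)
The first claim follows immediately from \cref{disc}. For the second, my plan is to argue by contradiction: suppose $\phi\colon X\to I_\N$ is a topological embedding. Since $X$ is a semilattice, $\phi(X)\subseteq E(I_\N)$, and by \cref{semil} the subspace $E(I_\N)$ is topologically isomorphic to the compact metric semilattice $2^\N$ (equipped with pointwise intersection), so I will view $\phi$ as a topological embedding of the discrete space $X$ into $2^\N$. The strategy is to combine chain-finiteness with an Erd\H{o}s--Rado style Ramsey argument to force either a topological contradiction or an infinite strictly descending chain in $X$.

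I would first observe that $X$ must contain an infinite antichain: indeed, a poset in which every chain and every antichain is finite must itself be finite (as a finite union of finite chains), so the hypotheses force an infinite antichain $\{a_n\}_{n\in\N}$ in $X$. I would then apply the canonical Ramsey theorem of Erd\H{o}s and Rado to the $X$-valued colouring $\{i,j\}\mapsto a_i\wedge a_j$ of $[\N]^2$, yielding an infinite subset $I\subseteq \N$ on which one of four canonical patterns holds: (a)~the colouring is constant; (b)~it depends only on $\min$; (c)~it depends only on $\max$; (d)~it is injective.

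In Case~(a) all pairwise meets equal some $b\in X$, so the sets $\phi(a_i)\setminus \phi(b)$ are pairwise disjoint for $i\in I$; each $k\in\N$ therefore lies in $\phi(a_i)\setminus \phi(b)$ for at most one $i$, forcing $\phi(a_i)\to \phi(b)$ pointwise in $2^\N$. Since $b<a_i$ strictly by the antichain property, the $\phi(a_i)$ are distinct from $\phi(b)\in\phi(X)$, contradicting discreteness of $\phi(X)$. In Cases~(b) and~(c) the colouring has the form $g(\min(i,j))$ or $g(\max(i,j))$; comparing $(a_i\wedge a_j)\wedge a_k$ with $a_i\wedge(a_j\wedge a_k)$ for $i<j<k$ in $I$ shows via a short associativity computation that $g$ is monotone, and chain-finiteness then forces $g$ to be eventually constant on an infinite tail of $I$, reducing to Case~(a). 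In Case~(d) all pairwise meets $a_i\wedge a_j$ are distinct; fixing $i_0\in I$, the set $\{a_{i_0}\wedge a_j : j\in I\setminus\{i_0\}\}$ is infinite and lies strictly below $a_{i_0}$ (since antichain elements are incomparable), and I would extract from it a new infinite antichain and iterate the entire procedure.

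The main obstacle will be guaranteeing termination of the iteration. Each visit to Case~(d) produces a distinguished element strictly below the previous one, so tracking these markers across iterations yields a strictly descending chain $a_1^{(0)}>a_1^{(1)}>a_1^{(2)}>\cdots$ in $X$. By chain-finiteness this cannot continue indefinitely, so the iteration must terminate after finitely many steps at Case~(a) (possibly reached via Case~(b) or~(c)), at which point the topological contradiction closes the argument.
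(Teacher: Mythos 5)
Your argument is correct, but it takes a genuinely different route from the paper's. The paper dispatches the non-embeddability in two lines: Lemma~4.1 of \cite{BB1} says that a chain-finite semilattice is closed in every zero-dimensional Hausdorff topological semigroup containing it, so the image $(X)\phi\subseteq E(I_\N)$ would be closed in $E(I_\N)\cong 2^\N$ (\cref{semil}), hence compact, contradicting the fact that an infinite discrete space is never compact. You replace this appeal to an external categorical-closedness result by a self-contained Ramsey-theoretic analysis inside $2^\N$, and the pieces do fit together: in the constant case the sets $\phi(a_i)\setminus\phi(b)$ are pairwise disjoint, so $\phi(a_i)\to\phi(b)$ and $\phi(b)$ fails to be isolated; in the min/max cases the computation $g(i)=a_i\wedge a_j\wedge a_k\leq a_j\wedge a_k=g(j)$ (and its dual) gives monotonicity, which together with the injectivity built into the canonical pattern already yields an infinite chain, so these cases contradict chain-finiteness outright (your ``eventually constant'' reduction to the constant case also works); and the injective case produces a strictly descending sequence of markers that chain-finiteness terminates. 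What each approach buys: the paper's proof is shorter and exposes the structural mechanism (closedness plus compactness of $E(I_\N)$, the same engine behind \cref{theorem-compact-2}), while yours is elementary and isolates a purely combinatorial fact of independent interest, namely that every infinite chain-finite semilattice contains an infinite subset with all pairwise meets equal --- exactly the configuration that cannot sit discretely in $2^\N$. One small repair: your justification that an infinite chain-finite poset contains an infinite antichain (``finite union of finite chains'') tacitly uses Dilworth's theorem with a uniform bound on antichain sizes, which you do not have a priori; the correct reference is the Dushnik--Miller/Ramsey dichotomy (every infinite poset contains an infinite chain or an infinite antichain), which you in any case need again to extract antichains from the meet-sets in the injective case.
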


\begin{proof}
Since the semilattice $X$ is countable and discrete, \cref{disc} implies that $X$ embeds topologically into $\N^\N$.
Lemma~4.1 from \cite{BB1} implies that for each topological embedding $\phi$ of $X$ into a zero-dimensional Hausdorff topological semigroup $Y$ the image $(X)\phi$ is closed in $Y$.
Consequently, for each topological embedding $\phi: X\rightarrow I_\N$ the image $(X)\phi\subseteq E(I_\N)$ is closed.
 Since the semilattice $X$ is  noncompact, \cref{semil} yields that $X$ cannot be embedded topologically into $I_\N$.
\end{proof}

Looking at Theorems~\ref{theorem-we-need-better-labels} and~\ref{newtheorem1}
it is natural to ask whether each subsemilattice of $I_\N$ or $\N^\N$ is a
$U$-semilattice. The following lemma gives a negative answer to this question.

\begin{proposition}\label{ulemma}
    There exists a topological semilattice $X$ which is not a $U$-semilattice
    at any of its points, but embeds topologically into $I_\N$ and $\N^\N$.
\end{proposition}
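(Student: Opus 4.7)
The plan is to let $X$ be the family of all cofinite subsets of $\N$, equipped with the operation of intersection, regarded as a subsemilattice of $2^\N$ (identifying subsets with their characteristic functions) with the subspace topology. Closure under intersection holds because a union of finitely many finite sets is finite, and multiplication is continuous as the restriction of the coordinate-wise minimum on $2^\N$; so $X$ is a countable topological semilattice.

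To verify the embeddings, observe that by~\cref{semil} the semilattice $E(I_\N)$ is topologically isomorphic to $2^\N$. Hence $X$ can be identified with the subsemilattice of $E(I_\N)$ consisting of partial identities $\id_A$ for $A \subseteq \N$ cofinite, yielding a topological embedding $X \hookrightarrow I_\N$. Since $X$ is a semilattice and therefore a Clifford semigroup, \cref{embcl} then provides a topological embedding $X \hookrightarrow \N^\N$.

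The main step is to show that $X$ fails to be a $U$-semilattice at every point. Fix $x \in X$ and take, as candidate witnessing open set, $U = X$. Assume for a contradiction that some $y \in X$ and some open neighbourhood $V$ of $x$ in $X$ satisfy $V \subseteq {\uparrow} y$. After shrinking $V$ we may assume
\[V = \set{z \in X}{z \cap F = x \cap F}\]
for some finite $F \subseteq \N$. For any $i \in \N \setminus F$, the set $z = x \setminus \{i\}$ is cofinite and hence lies in $X$; it also agrees with $x$ on $F$, so $z \in V \subseteq {\uparrow} y$, which forces $y \subseteq z$ and in particular $i \notin y$. As $i$ was an arbitrary element of $\N \setminus F$, this gives $y \subseteq F$, contradicting the cofiniteness of $y$.

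The only real obstacle is identifying the right example: the whole semilattice $2^\N$ is a $U_2$-semilattice, so the subsemilattice must be chosen to block the usual "approximation from below by finitely supported elements". Restricting to cofinite subsets does exactly this, since no finitely supported set belongs to $X$, after which the verification is direct.
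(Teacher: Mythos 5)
Your proof is correct, and it reaches the conclusion by a genuinely different example from the paper's. The paper takes $X$ to be the set of eventually-$1$ sequences in $Z^{\N}$, where $Z=\set{\frac{1}{n+1}}{n\in\N}\cup\set{2-\frac{1}{n+1}}{n\in\N}$ is a discrete subsemilattice of $(\mathbb R,\min)$ with no least element; the embedding into $I_\N$ is obtained indirectly, by embedding the compact metrizable zero-dimensional semilattice $\overline{Z}^{\N}\supseteq X$ via \cref{theorem-compact-2}, and the failure of the $U$-property at $x$ is witnessed by replacing the $(k+1)$-st coordinate of $x$ by some $t<y_{k+1}$, which exists because $Z$ has no minimum. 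Your example --- the cofinite subsets of $\N$ under intersection, i.e.\ the eventually-$1$ sequences in $2^{\N}$ --- is more economical on the embedding side: it sits inside $E(I_\N)\cong 2^{\N}$ directly by \cref{semil}, with no appeal to the compact case, and the passage to $\N^\N$ via \cref{embcl} is the same in both arguments. The mechanism defeating the $U$-property is also slightly different: since your $Z=\{0,1\}$ does have a minimum, you cannot always drop below $y$ at a prescribed coordinate; instead you use that $y$, being cofinite, meets $\N\setminus F$ in some $i$, and then $x\setminus\{i\}$ lies in the basic neighbourhood determined by $F$ but not in ${\uparrow}y$. That step is sound (and your ``shrink $V$ to a basic neighbourhood'' reduction is the same manoeuvre the paper uses implicitly), so both proofs are valid; yours is arguably the cleaner of the two.
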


\begin{proof}
    Let $\mathbb R$ be the real line endowed with the usual topology and the
    semilattice operation of taking the minimum. Consider the discrete subsemilattice
    $Z=\set{\frac{1}{n+1}}{n\in \N}\cup \set{2-\frac{1}{n+1}}{n\in\N}$ of $\mathbb R$.
    Note that $\overline{Z}=Z\cup\{0,2\}$ is a compact zero-dimensional topological
    semilattice. Let $$X=\set{(x_n)_{n\in\N}\in Z^{\mathbb N}}{x_n=1 \text{ for all
            but finitely many } n\in\N }$$ 
    be a subsemilattice of the Tychonoff product
    $Z^{\N}$. It is easy to see that $Y=\overline{Z}^{\N}$ is a compact metrizable
    zero-dimensional topological semilattice which contains $X$. By
    \cref{theorem-compact-2}, $Y$ embeds topologically into $I_\N$. It follows that
    $X$ embeds topologically into $I_\N$ and hence into $\N^\N$ as well, by
    \cref{embcl}.

    Fix a point $(x_n)_{n\in\N}\in X$, an open neighborhood $U$ of $x$ and a point
    $(y_n)_{n\in\N}\in U$. In order to show that $X$ is not a $U$-semilattice at
    $x$ consider a basic open neighborhood $V=\{(z_n)_{n\in\N}\in X: z_n=x_n$ for all $n\leq k\}$ of $x$. Let $t\in Z$ be such that $t<y_{k+1}$. It is clear that
    $(x_0,\ldots, x_n,t,1,\ldots, 1,\ldots)\in V\setminus {\uparrow}y$ implying
    that $x$ doesn't belong to the interior of ${\uparrow}y$. Since $y$ was chosen arbitrarily, $X$ is not a $U$-semilattice at the point $x$.
\end{proof}

Let $X$ be a non-empty topological space. The {\em strong Choquet game} on $X$
is defined as follows: Player I chooses a pair $(x_0,U_0)$ where $U_0$ is an
open subset of $X$ and $x_0\in U_0$. Player II responds with an open subset
$V_0$ such that $x_0\in V_0\subseteq U_0$. At stage $n$ Player I chooses a pair
$(x_n,U_n)$ such that $U_n$ is an open subset of $X$ and $x_n\in U_n\subseteq
    V_{n-1}$. Player II responds with an open set $V_n\subseteq U_n$ which contains
$x_n$. If $\bigcap_{n\in\omega}U_n=\emptyset$, then Player I wins. Otherwise,
Player II wins. The following result was proved in~\cite{Ch}.

\begin{theorem}\label{choquet}
    A metrizable space $X$ is completely metrizable if and only if Player II
    has a winning strategy in strong  Choquet game on $X$.
\end{theorem}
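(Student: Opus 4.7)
The plan is to prove both directions of Choquet's characterization. For the easy direction, that complete metrizability implies Player~II wins, I would fix a compatible complete metric $d$ on $X$ and describe an explicit winning strategy: at her $n$-th turn, after Player~I plays $(x_n, U_n)$ with $x_n \in U_n \subseteq V_{n-1}$, Player~II chooses $V_n$ to be an open ball around $x_n$ of $d$-diameter less than $2^{-n}$ whose $d$-closure lies in $U_n$ (possible because $U_n$ is open and contains $x_n$). The resulting sequence $\overline{V_0} \supseteq \overline{V_1} \supseteq \cdots$ consists of nonempty closed sets of diameter tending to zero, so by Cantor's intersection theorem $\bigcap_n \overline{V_n}$ is a singleton, and since $\overline{V_n} \subseteq U_n$ this yields $\bigcap_n U_n \neq \emptyset$.

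For the harder direction, suppose Player~II has a winning strategy $\sigma$. The plan is to show that $X$ is a $G_\delta$ subset of its metric completion $\hat{X}$ with respect to a fixed compatible metric $d$, after which Alexandrov's classical theorem produces a compatible complete metric on $X$. For each $n \in \N$, I would define $G_n \subseteq \hat{X}$ to be the set of those $\hat{x} \in \hat{X}$ which admit an open neighbourhood $\hat{U}$ in $\hat{X}$ of $d$-diameter less than $2^{-n}$ such that $\hat{U} \cap X$ occurs as the final Player~II move $V_k$ in some finite legal play of the strong Choquet game on $X$ consistent with $\sigma$. Each $G_n$ is open by construction, and the inclusion $X \subseteq G_n$ is straightforward: for $x \in X$, Player~I can iterate tight neighbourhoods of $x$ until $\sigma$ eventually responds inside a ball of small diameter. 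For the reverse inclusion, given $\hat{x} \in \bigcap_n G_n$, one threads the finite witnessing plays into a single infinite $\sigma$-play whose Player~II responses $V_k$ shrink to $\hat{x}$ in $\hat{X}$; since $\sigma$ wins, $\bigcap_k V_k$ contains a point of $X$, and the diameter control forces that point to equal $\hat{x}$, so $\hat{x} \in X$.

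The main obstacle is the recursive extraction of a single coherent infinite $\sigma$-consistent play from the family of finite plays witnessing $\hat{x} \in G_n$ for each $n$. At each step one must, given the current partial play and a witness $\hat{U}_{k+1} \ni \hat{x}$ of $d$-diameter less than $2^{-(k+1)}$ (supplied by $\hat{x} \in G_{k+1}$), use density of $X$ in $\hat{X}$ to select a legal next Player~I move $x_{k+1} \in X$ inside both $V_k$ and $\hat{U}_{k+1}$, then arrange that $\sigma$'s response is contained in $\hat{U}_{k+1}$; this may require shrinking $\hat{U}_{k+1}$ so that the particular instance of a Player~II response recorded in the definition of $G_{k+1}$ is actually the $\sigma$-response to a move compatible with the existing partial play. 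Balancing the combinatorial constraints $x_{j+1} \in U_{j+1} \subseteq V_j$ across all stages with the need for the $V_k$ to collapse to $\hat{x}$ is the technical heart of the argument.
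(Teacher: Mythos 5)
The paper offers no proof of this statement --- it is quoted directly from Choquet's book \cite{Ch} --- so the only question is whether your argument is correct. Your first direction (complete metric $\Rightarrow$ Player~II wins, via closed balls of shrinking diameter and Cantor's intersection theorem) is correct and standard. The second direction, however, contains a genuine gap: for the sets $G_n$ as you define them, the inclusion $\bigcap_n G_n\subseteq X$ is simply false, so no amount of care in the ``threading'' step can save the argument. Concretely, let $X=(0,1)$, dense in $\hat X=[0,1]$ with the usual metric, and let $\sigma$ be the strategy in which Player~II answers a first move $(x_0,U_0)$ with $V_0=(0,\min(\epsilon,2x_0))$ whenever $U_0=(0,\epsilon)$, and answers every later move $(x_n,U_n)$ (and every first move not of that form) with an interval around $x_n$ whose closure in $[0,1]$ is contained in $U_n$ and whose diameter is at most $2^{-n}x_n$. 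This $\sigma$ is winning: from round $1$ onward the sets $\overline{V_1}\supseteq\overline{V_2}\supseteq\cdots$ are nested nonempty compact subsets of $(0,1)$ with diameters tending to $0$, so $\bigcap_n U_n$ is a singleton lying in $X$. Yet $0\in G_n$ for every $n$, witnessed by the one-round play with $x_0=2^{-n-3}$ and $U_0=(0,2^{-n-2})$, whose response $V_0=(0,2^{-n-2})$ is the trace on $X$ of the neighbourhood $[0,2^{-n-2})$ of $0$, of diameter less than $2^{-n}$. Thus $0\in\bigcap_n G_n\setminus X$. The underlying problem is exactly the one you flag: the finite plays certifying $\hat x\in G_n$ for different $n$ bear no relation to one another, and a winning strategy may ``dangle'' small sets clustering at a missing point in short plays while steering every continuation away from it; once a response $V_k$ misses a neighbourhood of $\hat x$, the nesting constraint $U_{k+1}\subseteq V_k$ makes it impossible to return. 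Shrinking $\hat U_{k+1}$ cannot force the recorded response to be the $\sigma$-response to a move extending the current position.

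The repair requires defining the $G_\delta$ witness recursively rather than level-by-level in isolation: one builds a tree of $\sigma$-positions in which each position $p$ of length $n+1$ extends one of length $n$, the chosen open sets $\hat V_p\subseteq\hat X$ (with $\hat V_p\cap X$ equal to II's last move in $p$) satisfy $\hat V_p\subseteq\hat V_q$ for the parent $q$ and $\operatorname{diam}(\hat V_p)<2^{-n}$, and the successors of each $q$ cover $X\cap\hat V_q$; one then sets $W_n=\bigcup\{\hat V_p: p\text{ of length }n\}$ and proves $X=\bigcap_n W_n$. Even with this nesting, producing the required infinite run from $\hat x\in\bigcap_n W_n$ means finding an infinite branch of the tree $\{p:\hat x\in\hat V_p\}$, which has nodes of every length but need not be finitely branching; the standard device is to take the successor families to be point-finite (refining the cover by II's responses, using paracompactness of the metrizable open set $\hat V_q$) so that K\"onig's lemma applies. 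This layer of the argument --- the nested tree together with a branch-selection mechanism --- is the actual content of the hard direction and is missing from your outline.
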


The following lemma is helpful in detecting scattered Polish spaces.

\begin{lemma}\label{Pol}
    A scattered space $X$ is Polish if and only if $X$ is regular and
    second-countable.
\end{lemma}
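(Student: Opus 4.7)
The plan is to handle the easy forward direction by appealing to standard facts, and to treat the substantive backward direction via the strong Choquet game. For the forward direction, any Polish space is metrizable and separable, hence regular and (being separable metrizable) second-countable, and no use is made of scatteredness. For the backward direction, assume $X$ is scattered, regular, and second-countable; Urysohn's metrization theorem yields that $X$ is metrizable, and second-countable metrizable spaces are separable, so $X$ is separable metrizable and it remains to establish complete metrizability. By \cref{choquet} it suffices to exhibit a winning strategy for Player II in the strong Choquet game on $X$.

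The strategy I would use is based on the Cantor--Bendixson rank. For each $x\in X$ there is a unique ordinal $r(x)$ with $x\in X^{(r(x))}=X^{r(x)}\setminus X^{r(x)+1}$, and by the definition of the derivative the point $x$ is isolated in the subspace $X^{r(x)}$; hence there exists an open set in $X$ whose intersection with $X^{r(x)}$ equals $\{x\}$. Player II's response to a move $(x_n,U_n)$ of Player I will be to pick such an open neighborhood $W_n$ of $x_n$ with $W_n\cap X^{r(x_n)}=\{x_n\}$, and then to set $V_n=W_n\cap U_n$, which is an open neighborhood of $x_n$ contained in $U_n$.

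To see this strategy is winning, note first that for any $y\in V_n$ one has either $y=x_n$ or $r(y)<r(x_n)$, since $r(y)\geq r(x_n)$ would place $y$ in $V_n\cap X^{r(x_n)}=\{x_n\}$. Applying this with $y=x_{n+1}\in U_{n+1}\subseteq V_n$ gives $r(x_0)\geq r(x_1)\geq\cdots$, with strict inequality whenever $x_{n+1}\neq x_n$. Well-foundedness of the ordinals forces this weakly decreasing sequence to stabilize from some stage $N$ onward, and once it stabilizes one has $x_n=x_N$ for every $n\geq N$. Since the $V_n$ are nested and each $V_n$ contains $x_n$, the point $x_N$ belongs to $V_n$ for every $n$, so $x_N\in\bigcap_{n\in\N}V_n\subseteq\bigcap_{n\in\N}U_n$ and Player II wins. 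The main obstacle---producing complete metrizability from mere metrizability---is overcome by using the ordinal-valued Cantor--Bendixson rank as a well-founded progress measure in lieu of a complete metric, and this is precisely where the scatteredness hypothesis is essential, since without it the rank need not be defined on all of $X$.
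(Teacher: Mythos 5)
Your proof is correct and follows essentially the same route as the paper: Urysohn metrization for metrizability, then a winning strategy for Player II in the strong Choquet game in which $V_n$ is chosen to meet the Cantor--Bendixson level of $x_n$ only in $\{x_n\}$, so that the ranks $r(x_n)$ form a weakly decreasing sequence of ordinals that must stabilize. You have merely written out the verification that the paper labels ``straightforward to check.''
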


\begin{proof}
($\Rightarrow$)
    Clearly, each Polish space is regular and second-countable.

($\Leftarrow$)
    By the Urysohn Metrization Theorem, each regular second-countable space $X$ is
    metrizable and separable. By \cref{choquet}, to prove that $X$ is Polish it suffices
    to show that Player II has a winning strategy in the strong Choquet game on
    $X$. Assume that we are at stage $n$ of the strong Choquet game and Player I
    chose a corresponding pair $(U_n,x_n)$. Find an ordinal $\alpha\in ht(X)$ such
    that $x_n$ belongs to the Cantor-Bendixson level $X^{(\alpha)}$. Player II can
    respond with any open neighborhood $V_n\subseteq U_n$ of $x_n$ such that $V_n
        \cap X^{(\alpha)} = V_n\cap X^{\alpha}=\{x_n\}$. Then, using the fact that
    ordinals do not possess infinite decreasing sequences, it is straightforward to
    check that this is a winning strategy for Player II.
\end{proof}

In the following four propositions we construct counterexamples to
\cref{question-main}.

\begin{proposition}\label{exB}
    There exists a countable commutative (and hence Clifford) Hausdorff
    topological inverse semigroup $S$ such that
    \begin{enumerate}[label=\rm (\roman*)]
        \item $S$ is locally compact and Polish;
        \item the semilattice $E(S)$ is compact;
        \item $S$ cannot be topologically embedded into $\N^{\N}$.
    \end{enumerate}
\end{proposition}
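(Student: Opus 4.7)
The plan is to construct $S$ as a ``bouquet'' of countably many integer groups glued at a common absorbing zero. Let $S=\{0\}\cup(\N\times\mathbb{Z})$ with commutative multiplication defined by $0\cdot s=0$, $(n,k)(m,l)=0$ if $n\neq m$, and $(n,k)(n,l)=(n,k+l)$. Then $S$ is a commutative Clifford inverse semigroup with $(n,k)^{-1}=(n,-k)$ and idempotent set $E(S)=\{0\}\cup\{(n,0):n\in\N\}$. Topologize $S$ by declaring every nonzero element isolated and taking the sets
\[
V_N=\{0\}\cup\{(n,0):n\geq N\},\qquad N\in\N,
\]
as a basis of neighborhoods of $0$.

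I would then perform the routine verifications. The topology is Hausdorff and second-countable. Continuity of multiplication reduces, on pairs involving $0$, to the fact that whenever $u,v\in V_N$ the product $uv$ equals either $0$ or some $(n,0)$ with $n\geq N$, so $V_N\cdot V_N\subseteq V_N$; continuity of inversion is immediate because each $V_N$ is closed under inversion. Since the only accumulation point of $S$ is $0$, the space $S$ is scattered of Cantor--Bendixson height $2$, regular, and second-countable, and is thus Polish by \cref{Pol}. Each $V_N$ is compact, because any open cover must cover $0$ by some $V_M$ and then leaves only finitely many isolated points uncovered, whence $S$ is locally compact. Finally, $E(S)$ is a convergent sequence and hence compact. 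This establishes (i) and (ii).

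The heart of the proof is (iii). Suppose for contradiction that $S$ embeds topologically into $\N^\N$. By \cref{I am done inventing new labels}, applicable since $S$ is commutative, there exist a countable family $\{D_i\}_{i\in\N}$ of countable discrete semigroups and continuous homomorphisms $f_i\colon S\to D_i$ such that the initial topology on $S$ induced by $\{f_i\}$ coincides with the given topology. For any continuous homomorphism $f\colon S\to D$ into a discrete semigroup, $f^{-1}(f(0))$ is a clopen neighborhood of $0$ and therefore contains $(n,0)$ for all $n\geq N_f$; because $(n,0)$ is the identity of the subgroup $G_n=\{(n,k):k\in\mathbb{Z}\}$, for every such $n$ and every $k\in\mathbb{Z}$,
\[
f((n,k)) = f((n,0))\,f((n,k)) = f(0)\,f((n,k)) = f(0\cdot(n,k)) = f(0).
\]
Consequently, every basic initial-topology neighborhood of $0$, being a finite intersection of sets of the form $f_i^{-1}(f_i(0))$, contains the entire subgroup $G_n$ for all sufficiently large $n$, and in particular contains some point $(n,1)\neq 0$. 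But $V_1$ contains no element of the form $(n,1)$, so $V_1$ is not open in the initial topology, contradicting the coincidence of the two topologies. The main obstacle is choosing the right construction: once the bouquet together with its convergent-sequence topology on the idempotents is in place, the non-embedding argument follows cleanly from the uniform rigidity observation above, which prevents any countable family of discrete-valued homomorphisms from separating the idempotent fibre $G_n$ from its identity in the topology at $0$.
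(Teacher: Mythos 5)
Your proof is correct, and it establishes the proposition with a genuinely different example from the paper's. The paper takes $S=T\times\{1,-1\}$, where $T$ is the semilattice with $ab=0$ for $a\neq b$, makes the idempotent copy $T\times\{1\}$ a convergent sequence while leaving $(0,-1)$ isolated, and derives its contradiction from \cref{cool} directly: commutativity upgrades the right congruences to two-sided congruences, the classes of $(0,1)$ must absorb cofinitely many $(x_n,1)$, translation by $-1$ then forces the classes of $(0,-1)$ to absorb the corresponding $(x_n,-1)$, so the isolated point $(0,-1)$ could not be isolated. You instead build a bouquet of copies of $\mathbb{Z}$ glued at an absorbing zero, invoke the prodiscrete characterisation of \cref{I am done inventing new labels} (itself a consequence of \cref{cool}), and observe that any continuous homomorphism to a discrete semigroup that identifies $(n,0)$ with $0$ must collapse the entire fibre $G_n$, so no basic initial-topology neighbourhood of $0$ can avoid the points $(n,1)$, whence $V_1$ fails to be open. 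The underlying mechanism is the same in both arguments --- idempotents converging to the zero force whole maximal subgroups to be collapsed by any congruence with clopen classes --- but your contradiction is phrased as non-openness of a neighbourhood of the limit point rather than non-isolatedness of a second, isolated copy of it, and your example has the mildly pleasant extra feature that the obstruction lives in a single point $0$ rather than in an auxiliary group factor. Your verifications of (i) and (ii) are routine and correct (for continuity of multiplication at a pair $(0,(m,l))$ with $l\neq 0$ one should note that taking $M>m$ gives $V_M\cdot\{(m,l)\}=\{0\}$, which you implicitly use); the appeal to \cref{Pol} for Polishness matches the paper.
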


\begin{proof}
    By $T$ we denote the set $\{0\} \cup \set{x_n}{n\in \N}$ endowed with the
    semilattice operation
    $$ab=
        \begin{cases}
            a, \quad\hbox{if } a=b; \\
            0, \quad\hbox{otherwise}.
        \end{cases}
    $$
    Let $\{1,-1\}$ be the two-element multiplicative group. Then the direct
    product $S=T{\times}\{1,-1\}$ is a countable commutative inverse semigroup
    whose semilattice $E(S)$ coincides with the set $T {\times}\{1\}$. We endow
    $S$ with the topology $\Tau$ defined as follows:
    \begin{enumerate}
        \item each element $x\in S\setminus \{(0,1)\}$ is isolated;
        \item the sets $U_n=\{(0,1)\}\cup\set{(x_i,1)}{i\geq n}$, $n\in\N$ form an open neighborhood basis at $(0,1)$.
    \end{enumerate}
    It is easy to check that $(S,\Tau)$ is a locally compact regular scattered
    second-countable topological inverse semigroup and the semilattice
    $E(S)=T{\times}\{1\}$ is compact. By~\cref{Pol}, the space $(S,\Tau)$ is
    Polish.  To derive a contradiction, assume that $(S,\Tau)$ can be embedded
    topologically into $\N^\N$. By \cref{cool}, there exists a family
    $\set{\rho_n}{n\in\N}$ of right congruences on $S$ such that the collection
    $\set{[x]_{\rho_n}}{x\in S, n\in\N}$ is a base of the topology $\Tau$. Since
    $S$ is commutative, $\rho_n$ is a two-sided congruence for every $n\in \N$.
    It follows that for any $n,m\in\N$, $((x_n,1), (0,1))\in \rho_m$ if and
    only if $((x_n,-1),(0,-1))\in \rho_m$. By the definition of $\Tau$, for
    each $m\in \N$ there exists $q_m\in\N$ such that $((x_n,1),(0,1))\in
    \rho_m$ for every $n\geq q_m$. It follows that for each $m\in \N$,
    $((x_n,-1),(0,-1))\in \rho_m$ for every $n\geq q_m$, and so the
    point $(0,-1)$ is not isolated in  $(S,\Tau)$. The obtained contradiction
    implies that $(S,\Tau)$ cannot be topologically embedded into $\N^{\N}$.
\end{proof}

\begin{proposition}\label{locally_compact_ditosemiex}
    There exists a countable linearly ordered locally compact Polish
    topological semilattice $X$ which cannot be topologically embedded into
    $\N^{\N}$.
\end{proposition}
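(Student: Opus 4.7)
The plan is to construct $X$ explicitly as a countable chain whose topology does not admit a basis of order-convex open sets, and then to rule out embeddings into $\N^\N$ (hence into $I_\N$ via \cref{embcl}) by observing that right congruences on a commutative chain have convex classes. Let $X=\N\cup\{\infty\}$ with the linear order $0<1<2<\cdots<\infty$, so that $(X,\min)$ is a commutative semilattice. Equip $X$ with the topology $\tau$ generated by all singletons $\{n\}$ for $n\in\N$ together with the sets
\[V_n = \{2k : k>n\}\cup\{\infty\}\qquad (n\in\N).\]
In $(X,\tau)$ every point of $\N$ is isolated, while $\{V_n : n\in\N\}$ is a decreasing neighbourhood basis at $\infty$.

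I would first verify that $(X,\tau,\min)$ is a topological semilattice with the remaining required properties. The key observation for continuity of $\min$ is that each $V_n$ is closed under $\min$, since the minimum of two even integers is even, and $V_m\subseteq V_n$ whenever $m\geq n$; this immediately yields $V_m\times V_m\subseteq \min^{-1}(V_n)$ for $m\geq n$, handling continuity at $(\infty,\infty)$. At every other point of $X\times X$, at least one coordinate belongs to $\N$ and is isolated, and for such $(x,y)$ a neighbourhood $\{x\}\times V_n$ works once $n$ is large enough that $2(n+1)\geq x$. The set $V_0=\{2,4,6,\dots\}\cup\{\infty\}$ is clopen, and with the subspace topology is homeomorphic to the one-point compactification of $\N$, while the complement $X\setminus V_0=\{0,1,3,5,\dots\}$ is discrete. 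Hence $X$ is the topological disjoint union of a compact metrizable space and a countable discrete space, from which it follows that $X$ is Hausdorff, zero-dimensional, second countable, Polish, and locally compact; countability and linearity of the order are immediate by construction.

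The heart of the proof is to show that $X$ does not embed topologically into $\N^\N$. Suppose for contradiction that it does. By \cref{cool} there exists a countable family $\{\rho_i : i\in\N\}$ of right congruences on $X$ whose classes form a basis for $\tau$. Since $X$ is commutative, each $\rho_i$ is a two-sided congruence on $(X,\min)$, and every congruence class on a chain under $\min$ is order-convex: if $a\,\rho_i\,b$ and $a\leq c\leq b$, then $c=\min(c,b)\,\rho_i\,\min(c,a)=a$. Consequently the putative basis must consist of $\tau$-open order-convex subsets of $X$.

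However, $V_0$ is $\tau$-open but is not a union of $\tau$-open order-convex subsets. Any order-convex subset of $X$ containing the maximum element $\infty$ is either the singleton $\{\infty\}$ or of the form $\{n\in X : n\geq a\}$ for some $a<\infty$. The singleton $\{\infty\}$ is not $\tau$-open, because every basic neighbourhood $V_n$ of $\infty$ contains infinitely many finite points. If $a<\infty$, the set $\{n\in X : n\geq a\}$ contains every integer $k\geq a$, and in particular contains odd integers, which do not belong to $V_0$; so it is not a subset of $V_0$. Therefore no $\tau$-open order-convex subset of $X$ contained in $V_0$ contains $\infty$, contradicting the assumption that $V_0$ is open in the topology generated by such sets. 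This shows that $X$ does not embed into $\N^\N$, and by \cref{embcl} it does not embed into $I_\N$ either. The main technical subtlety is the verification of continuity of $\min$, which rests precisely on the fact that the neighbourhood filter $\{V_n\}$ at $\infty$ is closed under $\min$.
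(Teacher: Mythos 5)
Your proof is correct and is essentially the paper's argument in mirror image: the paper places the single non-isolated point at the bottom of the chain ($0$ in $\{1/(n+1)\}\cup\{0\}$ with neighbourhoods running through every other term), while you place it at the top, and both proofs derive the contradiction from the fact that congruence classes of a commutative chain under $\min$ are order-convex, so any nontrivial open class at the limit point must contain points excluded from its basic neighbourhoods. The only cosmetic difference is that you isolate the convexity of congruence classes as an explicit observation, whereas the paper carries out the corresponding one-step computation ($1/(2m+1)\in[0]_{\rho_n}$ forces $1/(2m+2)\in[0]_{\rho_n}$) inline.
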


\begin{proof}
    Let $X$ be the semilattice $(\set{\frac{1}{n+1}}{n\in\N}\cup\{0\},\min)$
    endowed with the topology $\Tau$ which is defined as follows: each non-zero
    element of $X$ is isolated and an open neighborhood basis at $0$ consists
    of the sets $U_m=\set{\frac{1}{2n+1}}{n\geq m}\cup\{0\}$, $m\in \N$. One can
    easily check that $X$ is a locally compact regular scattered
    second-countable linearly ordered topological semilattice. By \cref{Pol},
    the space $(X,\Tau)$ is Polish. To derive a contradiction, assume
    that $X$ is  topologically isomorphic to a subsemigroup of $\N^{\N}$.
    Taking into account the commutativity of $X$, \cref{cool} yields the
    existence of a family $\set{\rho_n}{n\in\N}$ of congruences on $X$ such that
    the collection $\set{[x]_{\rho_n}}{x\in X, n\in\N}$ is a basis of the topology
    $\Tau$. Then there exists $n\in \N$ such that
    $[0]_{\rho_n}\subseteq U_1=\set{\frac{1}{2n+1}}{n\geq
    1}\cup\{0\}$. Observe that if there exists $m\in \N$   such
    that $\frac{1}{2m+1}\in [0]_{\rho_n}$, then $\frac{1}{2m+2}\in
    [0]_{\rho_n}\setminus U_1$, which contradicts our assumption. Otherwise,
    $[0]_{\rho_n}=\{0\}$, and so $0$ is an isolated point in
    $(X,\Tau)$, which contradicts the definition of $\Tau$. The obtained
    contradictions imply that $X$ is not topologically isomorphic to a
    subsemigroup of $\N^{\N}$.
\end{proof}

\begin{proposition}\label{rs}
    Let $S$ be a right simple semigroup. Then there exists no topology $\Tau$
    on $S^0$ such that $0$ is not isolated and  $(S^0,\Tau)$  embeds
    topologically into $\N^\N$.
\end{proposition}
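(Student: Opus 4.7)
The plan is to argue by contradiction, with \cref{cool} supplying the main tool. Suppose there is a topology $\Tau$ on $S^0$ in which $0$ is not isolated and such that $(S^0,\Tau)$ embeds topologically into $\N^\N$. The first move is to apply \cref{cool} to extract a countable family $\set{\rho_i}{i\in\N}$ of right congruences on $S^0$, each with countably many classes, whose equivalence classes $\set{[x]_{\rho_i}}{x\in S^0,\,i\in\N}$ form a basis for $\Tau$.

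Next, I would exploit the non-isolation of $0$: since $\{0\}$ fails to be open, no basic open neighborhood $[0]_{\rho_i}$ of $0$ can equal $\{0\}$, so for every $i\in\N$ there exists $s_i\in S$ with $(s_i,0)\in\rho_i$. The crux is then to invoke right simplicity of $S$, which gives $s_iS=S$. Thus, for any $b\in S$, one can find $t\in S$ with $s_it=b$; since $\rho_i$ is a right congruence and the adjoined zero is absorbing (so $0\cdot t=0$), we obtain
\[
(b,0)=(s_it,\,0\cdot t)\in\rho_i.
\]
As $b\in S$ was arbitrary, this forces $[0]_{\rho_i}=S^0$ for every $i\in\N$.

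Finally, since every basic open set containing $0$ coincides with $S^0$, every open neighborhood of $0$ in $\Tau$ must be all of $S^0$. Choosing any $s\in S$ (possible since a right simple semigroup is non-empty), this means $0$ and $s$ cannot be separated by open sets, so $(S^0,\Tau)$ fails to be Hausdorff, contradicting the existence of a topological embedding into the Hausdorff space $\N^\N$. The main conceptual step is the middle one: one has to notice that right simplicity combined with the absorbing nature of the adjoined zero forces the whole semigroup into the $\rho_i$-class of $0$; once this is in hand, the clash with Hausdorffness is immediate.
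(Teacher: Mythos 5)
Your proof is correct and follows essentially the same route as the paper: invoke \cref{cool}, use non-isolation of $0$ to find $s_i$ with $(s_i,0)\in\rho_i$, and then use right simplicity together with the absorbing zero to force $[0]_{\rho_i}=S^0$, contradicting Hausdorffness. The paper merely packages the middle step as the dichotomy that $[0]_\rho$ is either a singleton or all of $S^0$ for any right congruence $\rho$, which is the same argument.
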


\begin{proof}
    Let $S$ be a right simple semigroup. Fix a right congruence $\rho$ on $S^0$
    such that $[0]_{\rho}$ is not singleton. Then there exists $a\in X$ such
    that $(0,a)\in\rho$. By the right simplicity of $S$, $aS=S$. It follows
    that for every $b\in S$ there exists $c\in S$ such that $b=ac$.
    Then $[b]_{\rho}=[ac]_{\rho}=[0c]_{\rho}=[0]_{\rho}$. Hence for each right
    congruence $\rho$ on $S^0$ the equivalence class $[0]_{\rho}$ is either
    singleton or coincides with $S^0$. By \cref{cool}, if $0$ is not isolated
    in $(S^0,\Tau)$, then  $(S^0,\Tau)$ doesn't embed topologically into
    $\N^\N$.
\end{proof}

 A semigroup $S$ is called {\em congruence-free} if $S$ admits only trivial (diagonal and universal) two-sided congruences.

\begin{proposition}\label{polish_inv_compact_semi_ex}
    There exists a countable congruence-free Hausdorff locally compact Polish
    topological inverse semigroup $S$ with a compact semilattice of idempotents
    which cannot be topologically embedded into $\N^{\N}$.
\end{proposition}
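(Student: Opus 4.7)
The plan is to take $S$ to be the Brandt semigroup over the trivial group, namely $S = (\N \times \N) \cup \{0\}$ with multiplication $(i,j)(k,l) = (i,l)$ if $j = k$ and all other products equal to $0$, and with $(i,j)^{-1} = (j,i)$, so that $E(S) = \{(i,i) : i \in \N\} \cup \{0\}$. A standard matrix-unit computation shows that any non-trivial two-sided congruence on $S$ forces every element to be related to $0$, so $S$ is congruence-free. I would equip $S$ with the topology in which every non-zero element is isolated and in which $0$ has the neighbourhood basis $V_n = \{0\} \cup \{(i,i) : i \geq n\}$ consisting only of $0$ together with a tail of diagonal idempotents.

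Next I would verify that this topology turns $S$ into a Hausdorff topological inverse semigroup with the required structural properties. Continuity of inversion at $0$ is immediate since each $V_n$ is invariant under the swap map. For continuity of multiplication at $0$ one checks that $V_n \cdot V_n = V_n$ and that, for any fixed $(k, l)$, both $V_n \cdot \{(k,l)\}$ and $\{(k,l)\} \cdot V_n$ collapse to $\{0\}$ as soon as $n$ exceeds $k$ or $l$ respectively. The subspace $E(S)$ is homeomorphic to the one-point compactification of $\N$ and hence compact, and each $V_n$ is likewise compact, so $S$ is locally compact. Being scattered, regular and second-countable, $S$ is Polish by \cref{Pol}. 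Note that $S$ itself is not compact, so \cref{theorem-compact-2} does not apply.

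The main content is the non-embeddability of $S$ into $\N^\N$. Suppose for contradiction that such an embedding exists; then by \cref{cool} there is a family $\{\rho_k\}_{k \in \N}$ of right congruences on $S$, each with countably many classes, whose classes form a basis of the topology. Every basic open neighbourhood of $0$ is then a class of the form $[0]_{\rho_k}$, and being open it must contain some $V_m$, so $(i, i) \, \rho_k \, 0$ for every $i \geq m$. Right-multiplying each such relation by $(i, l)$ yields $(i, i)(i, l) \, \rho_k \, 0 \cdot (i, l)$, i.e., $(i, l) \, \rho_k \, 0$, for arbitrary $l \in \N$. Hence $[0]_{\rho_k}$ contains the isolated element $(m, 0)$ and is therefore not contained in the open neighbourhood $V_1$ of $0$, contradicting the basis property at $0$.

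The main obstacle is the calibration of the topology. If $0$ were isolated, $S$ would embed into $\N^\N$ by \cref{disc}; conversely, if the neighbourhoods of $0$ were too generous (say co-finite in $S$), then $S$ would be compact and therefore embeddable by \cref{theorem-compact-2}. The choice of purely-diagonal tails $V_n$ is just restrictive enough that the right-congruence closure above immediately produces off-diagonal elements outside $V_1$, while remaining compatible with continuity of multiplication and inversion, local compactness, and compactness of $E(S)$.
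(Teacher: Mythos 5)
Your proposal is correct and is essentially the paper's own construction: the same Brandt semigroup over the trivial group (the paper realises it as the rank-$\le 1$ partial bijections in $I_\N$), the same topology with diagonal tails $V_n$ as a neighbourhood basis of $0$, and the same key step of right-multiplying $(i,i)\,\rho\,0$ by $(i,l)$ to force off-diagonal elements into $[0]_\rho$, contradicting \cref{cool}. The only cosmetic difference is that the paper cites Preston's theorem for congruence-freeness rather than redoing the matrix-unit computation.
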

\begin{proof}
    Let $S$ be the subsemigroup of $I_\N$ which consists of all partial
    bijections of cardinality $\leq 1$, and $\Tau$ be the topology on $S$ which
    satisfies the following conditions:
    \begin{enumerate}
        \item each nonempty partial bijection is isolated in $(S,\Tau)$;
        \item the sets $U_{k}=\{\emptyset\}\cup\set{\{(n,n)\}}{n\geq k}$, $k\in\N$ form an open
              neighborhood basis at $\emptyset$.
    \end{enumerate}
    Clearly the semigroup $S$ is isomorphic to the countable Brandt semigroup
    over the trivial group. By~\cite[Theorem 2]{Preston}, $S$ is congruence
    free. One can check that $(S,\Tau)$ is a regular locally compact
    second-countable topological inverse semigroup, and the semilattice
    $E(S)=\set{(n,n)}{n\in \omega}\cup\{\emptyset\}$ is compact.  \cref{Pol} implies
    that the space $(S,\Tau)$ is Polish.

    Assume that $(\{(n,n)\},\emptyset)\in \rho$ for some right congruence $\rho$ on $S$.
    Then for any $m\in\N$ we have that
    $$(\{(n,m)\},\emptyset)=(\{(n,n)\}\circ\{(n,m)\},\emptyset \circ
        \{(n,m)\})\in\rho.$$ Consequently, $\set{\{(n,m)\}}{m\in \N}\subseteq [0]_{\rho}$.
    Hence for each right congruence $\rho$ on $S$ the inclusion
    $[\emptyset]_{\rho}\subseteq \{\{(n,n)\}:n\in\N\}\cup\{\emptyset\}$ implies
    that $[\emptyset]_{\rho}=\{\emptyset\}$. \cref{cool} yields that the
    topological inverse semigroup $(S,\Tau)$ cannot be topologically embedded
    into $\N^{\N}$.
\end{proof}


\begin{thebibliography}{}

\bibitem{B}
    T. Banakh, {\em On cardinal invariants and metrizability of topological
    inverse Clifford semigroups}, Topology Appl. {\bf 128}:1 (2003), 13--48.

    \bibitem{BB1}
    T. Banakh, S. Bardyla, {\em Characterizing categorically closed commutative
    semigroups}, J. Algebra {\bf 591} (2022), 84--110.


    \bibitem{Bok}
    T. Banakh, B. Bokalo, {\em On cardinal invariants and metrizability of
    topological inverse semigroups}, Topology Appl. {\bf 128}: (2003), 3--12.

    \bibitem{BG}
    T. Banakh, O. Gutik, {\em On the continuity of inversion in countably
    compact inverse topological semigroups}, Semigroup Forum {\bf 68} (2004),
    411--418.

    \bibitem{BGPR}
    T. Banakh, O. Gutik, O. Potiatynyk and A. Ravsky, {\em Metrizability of
    Clifford topological semigroups}, Semigroup Forum {\bf 84}:2 (2012),
    301--307.

    \bibitem{BP}
    T.~Banakh, I. Pastukhova, {\em  On topological Clifford semigroups
    embeddable into products of cones over topological groups}, Semigroup Forum
    {\bf 89} (2014), 367--382.

    \bibitem{BR} T.~Banakh, A.~Ravsky, {\em  On feebly compact paratopological groups}, Topology Appl. {\bf 284} (2020), 107363.

    \bibitem{Bor}
 M. Bodirsky, F. M. Schneider, {\em A topological characterisation of endomorphism monoids of countable
structures}, Algebra Universalis, {\bf 77}:3 (2017), 251--269.

    \bibitem{Brand}
    N. Brand, {\em Another note on the continuity of the inverse}, Archiv Math.
    {\bf 39} (1982), 241--245.

    \bibitem{CHK}
    J. H. Carruth, J. A. Hildebrant, R. J. Koch, {\em The theory of topological semigroups}, volume 2, Marcel
    Dekker, 1986.

    \bibitem{Ch} G. Choquet, {\em Lectures on Analysis I.}, Benjamin, New York, 1969.

    \bibitem{Vmill}
    J. Dijkstra, J. van Mill, G. Stepr\~{a}ns, {\em Complete Erd\H{o}s space is
    unstable}, Math. Proc. Cambridge Philos. Soc. {\bf 137} (2004), 465--473.

\bibitem{EMP}
J. East, J. D. Mitchell, Y. P\'eresse, {\em Maximal subsemigroups of the semigroup of all mappings on an infinite set}, Trans. Amer. Math. Soc. {\bf 367} (2015), 1911--1944. 

\bibitem{main}
L. Elliott, J. Jonu\v{s}as, Z. Mesyan, J. D. Mitchell, M. Morayne and Y. P\'eresse, {\em Automatic continuity, unique Polish topologies, and Zariski topologies on monoids and clones}, Trans. Amer. Math. Soc. (accepted) https://doi.org/10.1090/tran/8987.  

\bibitem{EJMPP}
L. Elliott, J. Jonu\v{s}as, J. D. Mitchell, Y. P\'eresse and M. Pinsker, {\em Polish topologies on endomorphism monoids of relational structures}, Adv. Math. (accepted),  arXiv:2203.11577.

    \bibitem{Ellis}
    R. Ellis, {\em A note on the continuity of the inverse}, Proc. Amer. Math.
    Soc. {\bf 8} (1957), 372--373.

    \bibitem{Gan}
    O. Ganyushkin, V. Mazorchuk, {\em Combinatorics of nilpotents in symmetric
    inverse semigroups}, Annals of Combinatorics {\bf 8}:2 (2004), 161--175.

\bibitem{Gaughan}
E. D. Gaughan, {\em Topological Group Structures of Infinite Symmetric Groups},
Proceedings of the National Academy of Sciences of the United States of America,
{\bf 58}:3 (1967) 907--910.

\bibitem{GR}
    O. Gutik, D. Repov\v{s}, {\em The continuity of the inversion and the
    structure of maximal subgroups in countably compact topological semigroups},
    Acta Math. Hung. {\bf 124}:3 (2009), 201--214.

\bibitem{Howie1995aa}
J. M. Howie, {\em Fundamentals of semigroup theory}, 
Clarendon Press Oxford, 
London Mathematical Society Monographs, {\bf 12}, 1995.

\bibitem{Kechris1995}
  A. S. Kechris,
  Classical Descriptive Set Theory, volume 156 of Graduate Texts in Mathematics. Springer-Verlag, New York, 1995.

    \bibitem{Kud}
    G. Kudryavtseva, V. Mazorchuk, {\em Schur-Weyl dualities for symmetric
    inverse semigroups}, J. Pure Appl. Algebra {\bf 212}:8 (2008), 1987--1995.

    \bibitem{Law}
    M. Lawson, {\em Inverse Semigroups. The Theory of Partial Symmetries},
    World Scientific, Singapore, 1998.

    \bibitem{Dragan}
    D. Mas\v{s}ulovi\'c, M. Pech, {\em Oligomorphic transformation monoids and
    homomorphism-homogeneous structures}, Fundamenta Mathematicae {\bf 212}:1
    (2011), 17--34.

\bibitem{Mesyan1}
Z. Mesyan, {\em Monoids of injective maps closed under conjugation by permutations}, Israel J. Math.
{\bf 189}:1 (2011), 287--305.

\bibitem{Mesyan2}
Z. Mesyan. Generating self-map monoids of infinite sets. Semigroup Forum {\bf 75}:3 (2007), 648--675.

\bibitem{MMMP}
Z. Mesyan, J. Mitchell, M. Morayne, and Y. P\'eresse, {\em The Bergman-Shelah preorder on transformation
semigroups}, Mathematical Logic Quarterly {\bf 58} (2012), 424--433.

\bibitem{MS2023}
M. Megrelishvili and M. Shlossberg, {\em Non-archimedean topological monoids},
2023, arXiv:2311.09187,

    \bibitem{Mon}
    D. Montgomery, {\em Continuity in topological groups}, Bull. Amer. Math.
    Soc. {\bf 42} (1936), 879--882.

    \bibitem{Num}
    K. Numakura, {\em Theorems on compact totally disconnected semigroups and
    lattices}, Proc. Amer. Math. Soc. {\bf 8}:4 (1957) 623--626.

    \bibitem{P}
    I. Pastukhova, {\em Automatic continuity of homomorphisms between
    topological inverse semigroups}, Topol. Algebra Appl. {\bf 6} (2018)
    60--66.

    \bibitem{PP}
    C. Pech, M. Pech, {\em On automatic homeomorphicity for transformation
    monoids}, Monatsh. Math. {\bf 179}:1 (2016), 129--148.

    \bibitem{PP1}
    C. Pech, M. Pech, {\em Reconstructing the Topology of the Elementary
    Self-embedding Monoids of Countable Saturated Structures}, Studia Logica
    {\bf 106}:3 (2018), 595--613.

    \bibitem{Per}
    J. P\'erez, C. Uzc\'ategui, {\em Topologies on the symmetric inverse
    semigroup}, Semigroup Forum {\bf 104}:2 (2022), 398--414.

    \bibitem{Pfi}
    H. Pfister, {\em Continuity of the inverse},
    Proc. Amer. Math. Soc. {\bf 95} (1985), 312--314.

    \bibitem{PS}
    M. Pinsker, S. Shelah, {\em Universality of the lattice of transformation
    monoids}, Proc. Amer. Math. Soc. {\bf 141}:9 (2013), 3005--3011.

    \bibitem{Preston}
    G. B. Preston, {\em Congruences on Brandt semigroups}, Mathematische
    Annalen {\bf 139} (1959), 91--94.

    \bibitem{Roma}
    S. Romaguera, M. Sanchis, {\em Continuity of the inverse in pseudocompact paratopological groups}, Algebra Colloquium
{\bf 14}:1 (2007), 167--175.

    \bibitem{Tka}
M.~Tkachenko, {\em Paratopological and Semitopological Groups Versus Topological Groups.} In: Hart K., van Mill J., Simon P. (eds) Recent Progress in General Topology III. Atlantis Press, Paris, 2014.
\end{thebibliography}
\end{document}